\numberwithin{equation}{section}
 \theoremstyle{plain}
 \newtheorem{Thm}{Theorem}[section]
\newtheorem{Lemma}[Thm]{Lemma}
\newtheorem{Cor}[Thm]{Corollary}
\theoremstyle{definition}
\newtheorem{Rem}[Thm]{Remark}
\newcommand{\aanote}[1]{\todo[color=red!35]{\small AA: #1}}
\newcommand{\ovl}{\overline}
\newcommand{\Rr}{\mathbb{R}}
\newcommand{\dtheta}{\Delta \theta}
\newcommand*{\transpose}{^{\mkern-1.5mu\mathsf{T}}}
\DeclarePairedDelimiter\norm{\lVert}{\rVert}
\DeclarePairedDelimiterX\set[1]\lbrace\rbrace{\def\given{\;\delimsize\vert\;}#1}
\DeclareMathOperator{\Tr}{Tr}
\def\mathunderline#1#2{\color{#1}\underline{{\color{black}#2}}\color{black}}
\begin{document}
\onecolumn
\title{A Homotopic Approach to Policy Gradients\\  for Linear Quadratic Regulators with Nonlinear Controls}

\author{Craig Xu Chen}

\author{Andrea Agazzi}

\address{Department of Mathematics\\Duke University\\Durham, NC 27708 }
\email{craig.chen@duke.edu \and agazzi@math.duke.edu}

%\subjclass[2010]{Primary: 05C??. Secondary: 05C??}
%\keywords{policy gradients, reinforcement learning}
\begin{abstract}
We study the convergence of deterministic policy gradient algorithms in continuous state and action space for the prototypical Linear Quadratic Regulator (LQR) problem when the search space is not limited to the family of linear policies. We first provide a counterexample showing that extending the policy class to piecewise linear functions results in local minima of the policy gradient algorithm. To solve this problem, we develop a new approach that involves sequentially increasing a discount factor between iterations of the original policy gradient algorithm. We finally prove that this homotopic variant of policy gradient methods converges to the global optimum of the undiscounted Linear Quadratic Regulator problem for a large class of Lipschitz, non-linear policies.
\end{abstract}

\maketitle

\section{Introduction}

Recent advances in machine learning have strongly depended on techniques from reinforcement learning to generate their superhuman results on tasks that were previously deemed ``unachievable'' by artificial intelligence models. Some of the most spectacular examples are the recent breakthroughs in the games of Go and Starcraft 2 \cite{alphago, alphazero, alphastar} and the robotics tasks such as learning how to walk or controlling a hand \cite{RLwalk, robothand}.

A shared aspect of these recent advances is the use of \emph{policy gradient} algorithms (and their variants) \cite{sutton2000}. Given the simplicity of these algorithms, their success in solving non-convex optimization problems is rather surprising. In the past few years, there has been significant progress made in the development of our understanding of the convergence of these algorithms - especially in the tabular setting \cite{haarnoja18, mei2020}. However, many of the games and control problems mentioned in the first paragraph have incredibly large (or infinite in the case of robotics) state and action spaces, so that tabular representation is computationally prohibitive. Due to this complexity, none of the above breakthroughs would have been possible without the use of \emph{nonlinear function approximation} -- most commonly, (deep) neural networks -- in combination with policy gradient algorithms: With parametric function approximation, only the parameters of the approximator need to be stored rather than a table of all possible game-states. %Although many deep neural networks have millions of parameters, this is rather easy to store on modern hardware.

Nevertheless, the current body of theoretical work still does not explain the success policy gradient methods have experienced in Go and Starcraft 2 due to an insufficient understanding of how such methods behave when neural networks are used as the function approximator of choice. One obstruction in this sense stems from the nonlinear behavior of neural networks, resulting in optimization landscapes that are highly non-convex \cite{goodfellow2015}, preventing us from using many of the tools developed in classical optimization theory. This large gap between empirical success and theory of nonlinear function approximation in reinforcement learning algorithms is also related to our limited theoretical understanding of their interplay. In this paper, we contribute to filling this gap by proving convergence results for nonlinear deterministic policies in a prototypical control problem with continuous state and action space: the Linear Quadratic Regulator (LQR) problem. In previous work by \cite{bhandari19, fazel2018, bu2019, yang2019}, global convergence results have been obtained in this context by restricting to the linear policy class. This choice of search space favorably aligns with the policy gradient update since the optimal policy of the LQR is known to be a linear function of the state. Our work extends these results to a larger class of non-linear policies. To do so, we first construct an example to demonstrate the potential for negative interplay between the expressiveness of the policy class and the local nature of the policy gradient algorithm. We then consider a new perspective in solving reinforcement learning problems that involves iteratively updating a discount factor. With this perspective, we reduce the global convergence problem to a local one, allowing us to work with a wider policy class and to avoid potential local minima. These stronger convergence results do not come without a cost, though; the process of iteratively updating a discount factor necessarily results in more training iterations.

\subsection{Related Works}~\\

In light of the advantages provided by policy-based approaches to reinforcement learning, there has been a recent wave of research seeking to provide theoretical guarantees for these algorithms that have been shown to work in practice.

Most of the recent work on the convergence of policy gradient methods focuses on the finite action space setting. In this context, \cite{liu2019} has shown that, in finite action spaces, algorithms like TRPO \cite{schulman2015} and PPO \cite{schulman2018} converge to the globally optimal policy. More results on the global convergence of policy gradient dynamics and extensions to function approximation are obtained in \cite{agarwal19, mei2020, wang2020, li2021}.  However, in the aforementioned publications, the assumption of a finite action space is essential in obtaining the results; when considering continuous action spaces, we use the LQR as a proxy for more general environments since the LQR setting is well understood and allows for more straightforward analysis.

In the continuous action-space setting, \cite{fazel2018} show that deterministic policy gradient methods for the policy class of \emph{linear policies} converge to the globally optimal policy of the LQR problem. Building off of \cite{fazel2018}, \cite{bu2019} provides strong positive results in an explicit characterization of the LQR when learning with linear policies showing that the cost is smooth and coercive and that the discretization of various gradient flows lead to algorithms with linear convergence rates. Similarly, \cite{yang2019} provides a convergence guarantee for Actor-Critic methods in the LQR setting by considering the class of linear-Gaussian policies with fixed variance. Other recent work \cite{hambly2020} also considers the class of linear policies but works in the \emph{finite time-horizon} setting. Rather than working with a more complex learning task such as the finite-horizon LQR, our work instead tackles the problem of nonlinear function approximation and thus we prove our results in the simpler setting of the infinite-horizon, noiseless LQR; however, unlike previous work in this setting, we consider a wider (deterministic) policy class that can be understood as linear combinations of nonlinear functions. Furthermore, most works only consider the evolution of the policy with respect to time/iterations; we also consider the evolution of the optimal policy with respect to the discount factor. This new perspective allows us to use some new tools in proving the convergence of policy gradient methods in the LQR.

{Most relevant to our work, \cite{lavet2016} discusses the effect of changing the dscount factor during training of reinforcement learning algorithms. This study, although strictly empirical, shows that our homotopic variant of policy gradient methods can reduce the number of training iterations needed to learn an optimal policy, \ie that this modified reinforcement learning procedure not only improves the optimality properties of reinforcement learning algorithms but also their convergence speed. On the theoretical side, a team at DeepMind \cite{tan21} has studied how the \emph{value} function behaves as function of the discount factor and develop a set of interpolating objectives to go from a myopic value function $V_\gamma$ to the true objective $V_{\gamma^*}$; they also show that their family of interpolating objectives yields faster convergence and better asymptotic performance on MuJoCo tasks.}

\subsection{Contributions}~\\

Our work continues along a line of work using the LQR as a proxy for more general learning environments \cite{bradtke1993, hazan2017, hazan2018, arora2018, tu2018, dean2018, fazel2018, gravell2020, hambly2020}. In this paper, we present a homotopy-based approach inspired by the human learning process to iteratively update the policy until reaching the globally optimal policy, and in particular, we directly extend the result of \cite{fazel2018} to nonlinear policy classes. We show that by continuously updating the discount factor after policy improvements, one can reach the global optimum regardless of the quality of the initial policy, subject to a few assumptions on the parametrization and expressiveness of the policy (\eg our policy parametrization is linear in the parameters). Although this approach can be understood as a new type of policy-gradient algorithm, our focus is theoretical and we mainly consider the homotopy approach as a tool to prove convergence results.

The intuition behind the homotopy-based algorithm comes from the observation that when teaching children, one does not expect the child to consider the long term consequences of their actions. Instead, children are first taught to learn the basics (i.e., learn with a discount factor close to zero) before moving on to developing their long-term planning ability (increasing to a larger discount factor). Heuristically, one can make sense of this by reasoning that it’s likely easier to learn the optimal policy having learned the basics than it is learning it from scratch. Mathematically, the homotopy approach relies on the fact that the optimal policy of the LQR is continuous with respect to the discount factor. In more challenging environments such as Chess or Go, this may not be the case.

Summarizing, the main contributions of this work are as follows:

\begin{itemize}
\item We show that one can not always expect policy gradient methods to converge even with relatively well-behaved function approximators. We provide an example policy that is linear in the parameters, contains the globally optimal policy, but does \emph{not} converge to such globally optimal policy when trained with a fixed discount factor. Furthermore, this example satisfies all assumptions needed for our convergence result.

\item We formulate the policy gradient learning process as a homotopy between various optimal policies along the discount factor.
 %-- to the best of our knowledge, this has not been done before.
 By leveraging this idea, we provide a convergence result for an expanded policy class consisting of linear combinations of non-linear functions (in state space) under some mild assumptions on the expressiveness and parametrization of the policy class (see Assumptions \ref{asm:linear indep}, \ref{asm:linear functions in span}).
\end{itemize}

\noindent As a corollary, our result also shows that the assumption of a stabilizing initial policy in \cite{fazel2018} is not necessary. By using our homotopy algorithm, \emph{any} linear policy will converge to the optimal policy of the undiscounted LQR. This is ultimately a trade-off between initial information and number of training iterations; the homotopy approach allows us to start the learning process with less background information at the cost of more training iterations. This type of trade-off has been considered before, most notably in \cite{alphago} and \cite{alphazero} where AlphaGo was initially trained on human games via supervised learning before engaging in self-play but AlphaZero was trained entirely via self-play and no human input.

\subsection{Organization}~\\

The paper is organized as follows. In the coming section, we provide the necessary background information to understand the context of our problem (\sref{sec:prelims}). In \sref{sec:homotopy} we provide our result regarding the convergence of policy gradient methods with a non-linear function approximator in the LQR using the aforementioned homotopy approach inspired by the human learning process. In this section, we also provide the counterexample where policy gradient methods do not converge to the global optimum. The discussion is contained in \sref{sec:Conclusion}. The full proofs of the results, as well as some numerical examples corroborating our claims, obtained are housed in Appendices \ref{apdx:figures}, \ref{apdx:counterex proofs} and \ref{apdx:homotopy proofs}.

\section{Preliminaries and Background}\label{sec:prelims}

In this paper, we study a reinforcement learning problem where an agent interacts in discrete time with the environment with the goal of minimizing some cost function that depends on the agent's states and actions.  The agent does this by sequentially choosing actions over time which influence the agent's state in the environment. Throughout, for given $n,m\in \mathbb N$ we respectively define $X = \mathbb R^n$ and $U = \mathbb R^m$ as the set of possible states and actions of the agent. Correspondingly, for $t \in \mathbb Z_{\geq 0}$ we denote by $x_t \in X$ and $u_t \in U$ the state and action of the agent at time $t$. The function that maps a state to action of the agent in that state is called the \emph{policy} $\pi~:~X \to U$, and the motivation behind this paper is to investigate the conditions under which an agent will learn the optimal policy of a specific environment: the \emph{Linear-Quadratic Regulator}.

\subsection{The Linear-Quadratic Regulator}~\\

The Linear-Quadratic Regulator (LQR) is a classic optimal control problem.
%In general, optimal control theory is concerned with finding the control function for a given dynamical system that minimizes some cost function.
In this paper, we are interested in the special case where the dynamics are \emph{linear}, time-invariant with no disturbance or added noise and the cost function is \emph{quadratic} in the state and the control action. We consider the discounted infinite time-horizon problem,
\begin{align}
\text{minimize}
&\quad
	\Ex{x_0 \sim \rho_0}{ \sum_{t=0}^{\infty} \gamma^t
	\pc {x_t^\intercal Qx_t + u_t^\intercal Ru_t} }\label{eq:cost}\\
\text{with}
&\quad x_{t+1} = Ax_{t} + Bu_{t},  \stepcounter{equation}\tag{\theequation}\label{eq:lqr dynamics}
\end{align}
where $A~:~X \to X$ and $B~:~U \to X$ are the system (or transition) matrices, $Q~:~X \to X$ and $R~:~U \to U$ are positive definite cost matrices, $x_0 \in X$ is randomly distributed according to  probability distribution $\rho_0$ on $X$, and $\gamma \in [0,1]$ is the discount factor. For a given policy $\pi$, the expression to be minimized in \eref{eq:cost} is referred to as the \emph{cost} associated to $\pi$, which we denote by $C(\pi)$. In the entirety of this work, the pair $(A,B)$ is assumed to be controllable and $(A,D)$ is assumed to be observable (where $Q = D\transpose D$).

It is known \cite{AndersonMooreControlTheory}  that the optimal control for the LQR problem is a linear function of the state,
\begin{equation*}
u_t = K^* x_t.
\end{equation*}
If $A,B,Q,R,\gamma$ are known, the optimal control $K^*$ can be calculated explicitly. Let $P_\gamma$ denote the unique positive definite solution to the discounted discrete-time algebraic Riccati equation (DARE)
\begin{equation*}
P_\gamma
=
	\gamma A^\intercal P_\gamma A - \gamma^2 A^\intercal P_\gamma B
	\pc{
		R + \gamma B^\intercal P_\gamma B
	}^{-1}
	B^\intercal P_\gamma A + Q\,,
\end{equation*}
for which we know the solution exists and since $(A,B)$ is assumed controllable and $(A,D)$ is assumed observable \cite{AndersonMooreControlTheory}.
%This equation comes from using Bellman's optimality principle and stipulating that the cost can be written as $x\transpose P x$ where $P$ is postive-definite.
We can then write the optimal control as
\begin{equation}\label{eq:lqr optimal control}
K_\gamma^*
=
-\gamma \pc{2
	R + \gamma B^\intercal P_\gamma B
}^{-1} B^\intercal P_\gamma A.
\end{equation}
Later, we will sometimes omit the subscript $\gamma$ when the dependencies are clear from context.

Note that, in general, the optimal policy for the discounted LQR is different from the optimal policy for the undiscounted LQR. For instance, the optimal policy with respect to the discounted LQR may not even be stabilizing \cite{postoyan2017}.

Throughout the remainder of the paper, we will use $\gamma$ to denote the discount rate which we choose given an initial policy. We assume that $\gamma \in [0,1]$ is small enough such that the cost function is still bounded from above for all initial states $x_0 \in \supp(\rho_0)$, where $\rho_0$ is the distribution of initial states.

\subsubsection*{Why LQR?} Although the assumptions for the linear-quadratic system are rather restrictive, the LQR is still an important problem for multiple reasons. Firstly, it is one of the few problems where the closed-form of the optimal control exists. This fact by itself makes the LQR a pleasant environment to work with. Secondly, the LQR has a continuous state and action space. Thirdly, despite its simplicity, there is still a lack of theoretical guarantees regarding convergence of reinforcement learning algorithms in this setting. All of these factors have certainly contributed to its re-emergence as a benchmark environment in reinforcement learning theory; it is both nice to work with mathematically and difficult enough to be applicable to the real world.

\subsection{Policy Gradient Methods}~\\

Policy gradient methods are a general class of reinforcement learning algorithms where the agent directly learns a policy -- which is assumed to be in a certain parametric family of functions -- rather than first learning a value function. During training, the parameters of the policy are updated via gradient descent on the cost function. As mentioned in the introduction, policy gradient methods have increasingly become  the approach of choice for solving difficult reinforcement learning problems. One reason for this popularity is that policy-based methods, as opposed to value-based, can easily learn stochastic policies, whereas action-value based methods have no natural \emph{and} flexible way to do so. %In situations with imperfect information, the best decision can often be to do different actions with specific probabilities. %However, there are also many situations in which the optimal control is deterministic; fortunately, policy gradient methods are also applicable in the deterministic setting. \cite{silver14} shows that under mild conditions, deterministic policy gradients can be seen as the limiting case of stochastic policy gradients.

The fundamental result underlying the successful application of these methods is the policy gradient theorem \cite{sutton2000} which provides an explicit formula for the gradient of the performance in terms of the gradient of the policy w.r.t its parameters -- importantly, the gradient of the state distribution is \emph{not} needed, greatly simplifying the application of these methods.

%Although it is possible to generate stochastic policies with an action-value function by selecting actions from a softmax over the action-values of a state, this approach lacks the flexibility of its policy-based counterpart. Without modifying a temperature parameter, the softmax method can not approach a deterministic policy as the true action-values will always differ by a finite amount, which translates to specific probabilities that aren't $0$ or $1$. One could use a scheduled decay of the temperature parameter; however, in practice, this is often difficult to properly implement. On the other hand, with the policy-based approach, if the optimal policy is deterministic -- parameterization permitting -- the action preferences of the optimal actions will be pushed infinitely higher than those of the sub-optimal actions, resulting in convergence towards a deterministic policy.

The analogous theorem in the deterministic setting (first proven by \cite{silver14}) allows one to conveniently compute the gradient of the cost function when considering a class of deterministic policies, as is the case in this paper. %states that if $\pi_{\theta}$ is a deterministic policy, then,
% \begin{equs}
% \nabla J(\theta)
% &=
% 	\int_{X} \mu(x) \nabla_{\theta} \pi_{\theta}(x)
% 	\nabla_{u} Q_{\pi}(x,u)|_{u=\pi_{\theta}(x)} \d x \\
% &=
% 	\Ex{x \sim \mu}{
% 		\nabla_{\theta} \pi_{\theta}(x)
% 		\nabla_{u} Q_{\pi}(x,u)|_{u=\pi_{\theta}(x)}
% 	}
% \end{equs}
Furthermore, as shown in \cite{fazel2018}, it is possible to explicitly compute the gradient of the LQR cost function for linear policies. They show that for a linear policy $K$,
\begin{equ}%\label{e:pgLQRfazel}
\nabla C(K)
=
2 \pc{ (R+ \gamma B\transpose P_K B) K + \gamma B\transpose P_K A} \Sigma_K
\end{equ}
where $P_K$ denotes the ``cost to-go'' matrix (\ie $C(K) = \Ex{x_0 \sim \rho_0}{x_0\transpose P_K x_0}$) and $\Sigma_K$ is the state covariance matrix.

In our work, we consider the same LQR setup as in \cite{fazel2018}. That is, we also consider a deterministic policy rather than a stochastic policy, a randomly distributed initial state, and noiseless dynamics. Note that it is known \cite{AndersonMooreControlTheory} that the inclusion of additive zero-mean white noise to the LQR dynamics \eref{eq:lqr dynamics} does not change the optimal control.

\section{Main Results} \label{sec:homotopy}

\subsubsection*{Notation.} We use $\norm{ \cdot }$ as the operator norm of a matrix or the Euclidean norm of a vector. We will use $\lambda_{\mathrm{min}}(\cdot)$ to refer to the smallest eigenvalue of a matrix.
%For readers accustomed to notation used in PDE literature, we instead use $\Delta x$ to denote a difference vector.

In the first subsection, we construct the aforementioned example where vanilla (fixed discount factor) policy gradients only yield convergence to a \emph{local} optimum. Importantly, this policy class of the example satisfies the assumptions needed for our convergence result. In the following subsection, we provide the main theorems which show convergence of the homotopy algorithm. The lemmas and additional details for both subsections have been omitted for clarity, the full details can be found in Appendices \ref{apdx:homotopy proofs} and \ref{apdx:counterex proofs}.
{We denote by $\set{x_t}$ the sequence generated by the dynamics $x_{t+1} = Ax_t + B\pi_{\theta}(x_t)$. Throughout, $\pi_{\theta}$ denotes the parametric policy of choice evaluated with parameters $\theta \in \mathbb R^d$.}
In this paper, we consider policies of the form
\begin{equ}\label{eq:policy class}
\pi_{\theta}(x_t) = \sum_{k=1}^{d} \theta_k f_k(x_t)
\end{equ}
where each $f_k: \Rr^n \to \Rr^m$ is a non-linear, globally Lipschitz function and $\theta_k \in \Rr$ is a parameter to be learned.

To establish our results we make the following assumptions:
\begin{restatable}{Asm}{AsmOne}\label{asm:linear indep}
The functions $f_k$ are linearly independent.
\end{restatable}
\begin{restatable}{Asm}{AsmTwo}\label{asm:linear functions in span}
For the LQR problem defined by ($A,B,Q,R$), for all $\gamma \in [0,1]$, the $\gamma$-optimal policy $K_\gamma^*$ can be represented by $\pi_\theta$, \emph{i.e.} $ K_\gamma^*\in \text{Span}\set{f_k}$.
\end{restatable}

Assumption \ref{asm:linear indep} is a standard assumption when considering function approximation via linear combination. We need this assumption to guarantee the uniqueness of policy parametrization. Assumption \ref{asm:linear functions in span} is necessary to guarantee that, for any choice of $\gamma$,  the optimal policy is in the parametric policy class being considered.

{We note that the assumption of linear independence is, however, not essential for our proofs; rather, it is made for technical convenience to simplify our proofs of Lemma \ref{lem:secondorderapprox} and Theorem \ref{thm:convergence of first iteration}, guaranteeing local strict convexity of the landscape. For example, in the proof of Lemma \ref{lem:secondorderapprox}, if we were to relax Assumption \ref{asm:linear indep}, we would see that the eigenvectors that now correspond to the zero eigenvalue of the Hessian would be the directions where the policy is unaffected by changing the parameters. Therefore, we could relax the assumption to allow overparameterization and in turn to allow for wider networks in Remark \ref{rem:random features model}.}

\begin{restatable}{Lemma}{LemHomotopy}\label{lem:homotopy}
Assumptions \ref{asm:linear indep} and \ref{asm:linear functions in span} imply that the optimal parameters are continuous in the discount factor. In other words, for $\gamma \in [0,1]$ and for any $\epsilon > 0$, $\exists \delta > 0$ such that for $\gamma' \in [0,1]$ such that $\abs{\gamma - \gamma'} < \delta$, $\norm{\theta_{\gamma}^* - \theta_{\gamma'}^*} < \epsilon$, where $\theta_{\gamma}^*$ denotes the parameters that correspond to $K_{\gamma}^*$ for the policy class of \eref{eq:policy class}.
\end{restatable}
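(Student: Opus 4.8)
The plan is to chain together three continuity statements and let Assumptions \ref{asm:linear indep}--\ref{asm:linear functions in span} do the rest: (i) $\gamma \mapsto P_\gamma$ is continuous on $[0,1]$; (ii) the explicit formula \eqref{eq:lqr optimal control} then makes $\gamma \mapsto K_\gamma^*$ continuous on $[0,1]$; and (iii) Assumption \ref{asm:linear indep} lets one recover $\theta_\gamma^*$ from $K_\gamma^*$ by a \emph{fixed} bounded linear map, so $\gamma \mapsto \theta_\gamma^*$ is continuous, which is exactly the claimed $\epsilon$--$\delta$ statement.

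For step (i) I would argue as follows. For $\gamma \in (0,1]$, the substitution $\tilde A = \sqrt{\gamma}\, A$, $\tilde B = \sqrt{\gamma}\, B$ turns the discounted DARE into the standard (undiscounted) DARE for $(\tilde A, \tilde B, Q, R)$; controllability of $(A,B)$ and observability of $(A,D)$ pass to $(\tilde A, \tilde B, Q, R)$ for every $\gamma > 0$, so this DARE has a unique positive definite stabilizing solution, matching $P_\gamma$. To get continuity I apply the implicit function theorem to $F(P,\gamma) := Q + \gamma A^\intercal P A - \gamma^2 A^\intercal P B (R + \gamma B^\intercal P B)^{-1} B^\intercal P A - P$: this is a rational, hence smooth, map on the open set where $R + \gamma B^\intercal P B \succ 0$, and its partial differential $D_P F$ at the stabilizing solution is invertible — the classical nonsingularity of the Riccati operator at the stabilizing solution, equivalent to the closed-loop matrix being Schur-stable. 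At $\gamma = 0$ one simply has $F(P,0) = Q - P$, so $P_0 = Q$ and $D_P F(Q,0) = -I$ is invertible; thus the same implicit-function argument applies at every point of $[0,1]$, giving (indeed real-analytic) continuity of $\gamma \mapsto P_\gamma$ on all of $[0,1]$. Alternatively, one may cite the known continuity of DARE solutions in the problem data from \cite{AndersonMooreControlTheory}. Step (ii) is then immediate: $2R + \gamma B^\intercal P_\gamma B \succeq 2R \succ 0$ for all $\gamma \in [0,1]$ since $P_\gamma \succeq 0$, so matrix inversion is continuous along the path, and \eqref{eq:lqr optimal control} presents $K_\gamma^*$ as a composition of continuous maps of $\gamma$ and $P_\gamma$.

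For step (iii), I use that linear independence of $\{f_k\}$ as functions $\Rr^n \to \Rr^m$ guarantees finitely many points $y_1,\dots,y_p \in \Rr^n$ for which the stacked matrix $M := \big[\,f_k(y_j)\,\big]_{j,k} \in \Rr^{pm \times d}$ has full column rank $d$: the subspaces $\ker[f_1(y)\,\lvert\cdots\rvert\,f_d(y)] \subseteq \Rr^d$ intersect to $\{0\}$ over all $y$ (that is precisely injectivity of $\theta \mapsto \sum_k \theta_k f_k$), so a finite subcollection already intersects to $\{0\}$ by finite-dimensionality. By Assumption \ref{asm:linear functions in span}, every $K_\gamma^*$, viewed as the function $x \mapsto K_\gamma^* x$, lies in $\mathrm{Span}\{f_k\}$, so its unique parameter vector satisfies $M\,\theta_\gamma^* = \big(K_\gamma^* y_1,\dots,K_\gamma^* y_p\big)$ (stacked), whence $\theta_\gamma^* = (M^\intercal M)^{-1} M^\intercal \big(K_\gamma^* y_1,\dots,K_\gamma^* y_p\big)$. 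The right-hand side is a fixed linear, hence continuous, function of $K_\gamma^*$; composing with step (ii) yields continuity of $\gamma \mapsto \theta_\gamma^*$ on $[0,1]$, i.e. the lemma.

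The main obstacle is step (i): rigorously establishing continuity of the DARE solution, namely verifying that the stabilizing positive definite solution exists and is unique for the whole range of $\gamma$ (the rescaling handles $\gamma > 0$, and $\gamma = 0$ is trivial, but the case $\gamma \to 1$ relevant to the homotopy target deserves an explicit remark — it is just the standard undiscounted LQR under controllability and observability) and that the Riccati map is nonsingular there so that the implicit function theorem applies. Steps (ii) and (iii) are routine linear algebra once (i) is in hand, and Assumption \ref{asm:linear indep} enters only through the invertibility of $M^\intercal M$ in step (iii).
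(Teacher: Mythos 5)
Your proposal is correct and follows essentially the same route as the paper: continuity of $\gamma \mapsto P_\gamma$ (the paper's Lemma \ref{lem:continuity of P}, proved by checking stabilizability/detectability of the $\sqrt{\gamma}$-scaled system and citing a DARE-continuity result, just as your alternative suggests), then continuity of $K_\gamma^*$ from \eqref{eq:lqr optimal control}, then pulling continuity back to the parameters using Assumption \ref{asm:linear indep} to invert the linear parametrization. Your point-evaluation matrix $M$ plays exactly the role of the paper's Gram--Schmidt change-of-basis matrix in that last step --- arguably more concretely, since the paper does not specify the inner product it orthonormalizes against --- but the argument is the same in substance.
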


\begin{proof}
Let $\set{g_k}$ denote the orthonormal set obtained from $\set{f_k}$ via Gram-Schmidt. We can then write
\begin{equ}
\pi_{\theta}
=
	\sum_{k=1}^{d} \theta_k \sum_{l=1}^{d} \dtp{f_k,g_l} g_l
=
	\sum_{l=1}^{d} \dtp*{\sum_{k=1}^{d} \theta_k f_k, g_l} g_l
\eqqcolon
	\sum_{l=1}^{d} \hat{\theta_l} g_l
\end{equ}
Thus, we see that
\begin{equ}
\norm{\pi_{\theta} - \pi_{\theta'}}^2
=
\sum_{l=1}^{d} \pc{ \hat{\theta_l} - \hat{\theta'_l} }^2
\end{equ}
By Lemma \ref{lem:continuity of P} we see that the optimal controls $K_{\gamma}^*$ are continuous in $\gamma$. Thus we conclude that
\begin{equ}
\abs{\gamma-\gamma'} < \delta
\implies
\norm{\pi_{\theta_{\gamma}^*} - \pi_{\theta_{\gamma'}^*}} < \epsilon
\implies
\norm{\hat{\theta} - \hat{\theta'}} < \epsilon.
\end{equ}
Now, notice that we can also write the hat-coefficients as
\begin{equ}
\begin{bmatrix}
	\hat{\theta_1} \\
	\vdots \\
	\hat{\theta_d}
\end{bmatrix}
=
\begin{bmatrix}
	\dtp{f_1,g_1} & \cdots & \dtp{f_d,g_1} \\
	\vdots & \ddots & \vdots \\
	\dtp{f_1,g_d} & \cdots & \dtp{f_d,g_d}
\end{bmatrix}
\begin{bmatrix}
	\theta_1 \\
	\vdots \\
	\theta_d
\end{bmatrix},
\end{equ}
and this change-of-basis matrix is invertible by Assumption \ref{asm:linear indep}. Thus, denoting the above matrix by $A$, we conclude that
\begin{equ}
\norm{\hat{\theta} - \hat{\theta'}} < \epsilon
\implies
\norm{\theta-\theta'} < \norm{A^{-1}}\epsilon
\end{equ}
\end{proof}

%Assumption \ref{asm:homotopy} is perhaps the most restrictive assumption and the one we would like the most to better understand.Assumption \ref{asm:homotopy} is key for our homotopy approach: it assumes that the optimal policy after a small update in $\gamma$ is close (in parameter space) to the optimal policy before that change, allowing us to treat the policy gradient algorithm in a perturbative regime around optimality. For the class of linear policies (as in \cite{fazel2018, bu2019, gravell2020, hambly2020}), Assumption \ref{asm:homotopy} is automatically satisfied since the optimal control of the LQR is continuous in the discount factor \cite{continuityriccati}. However, when working with a more general policy class, it is not guaranteed that two arbitrarily close linear functions can be represented by arbitrarily close parameters which is why Assumption \ref{asm:homotopy} is necessary.
%
%That said, the setting of continuous approximation has been studied before and in the case of neural networks, the consequence of continuous approximation takes the form of a lower bound on the number of weights \cite{DeVore1989, yarotsky2017}. However, it is important to note that the direction of the implication in our assumption is different from that of continuous approximation. The continuous approximation assumption is in the direction of parameters $\Rightarrow$ functions, but does not say anything about the parameterizations of two similar functions.

\begin{restatable}{Rem}{RemarkOne}\label{rem:random features model}
2-layer ReLU neural networks (without bias and with a slightly modified architecture) trained in the random features regime satisfy the above assumptions. In this regime, we only train the weights of the output layer $\set{\theta_k} = \set{a_k} \cup \set{b_k}$. For the $i$'th component of the control, we can write
\begin{equ}
\pi_i(x)
=
\sum_{k=1}^{n} a_k \sigma(w_k\transpose x) + b_k \sigma(-w_k\transpose x),
\end{equ}
where $\sigma(x) = \max(0,x)$ is the rectified linear unit (ReLU), $x \in \Rr^n$ is the state, and $w_k \sim N(0, I_{n})$ are the first layer weights fixed at initialization. This type of modification with duplicated ReLU neurons has been considered before (see Appendix F of \cite{yehudai2020}).
\end{restatable}

\begin{proof}[Proof of Remark \ref{rem:random features model}]
Recall that $x \in \Rr^n$. For Assumption \ref{asm:linear indep}, we note that $\set{w_k}$ is a linearly independent set with probability 1. With this architecture, for every $x \in \Rr^n$ there will be exactly $n$ terms in the sum that are non-zero. Since $\set{w_k}$ forms a basis of $\Rr^n$, Assumption \ref{asm:linear functions in span} follows.
\end{proof}

\begin{Rem}\label{rem:homotopy assumption example}
The policy class $\set{f_k} = \set{1_{i,j}} \cup \set{F}$ for some globally Lipschitz function $F$ satisfies the above assumptions. Here $1_{i,j} \in \Rr^{m \times n}$ denotes the matrix with all 0's except for a $1$ in the $i,j$'th entry. This policy can be written as
\begin{equ}
u_t = Kx_t + \theta F(x_t)\,,
\end{equ}
where $K \in \Rr^{m \times n}$, $\theta \in \Rr$ are the parameters to be learned. This will be the policy class we consider in Section \ref{subsec:counterexample} to construct our counterexample.
\end{Rem}

\subsection{Counterexample with a Globally Lipschitz Policy}\label{subsec:counterexample}~\\

We now proceed to construct the aforementioned example where vanilla (fixed discount factor) policy gradients may result in convergence to a \emph{local} optimum. Importantly, the policy class of this example satisfies the assumptions needed for the convergence result provided in the subsequent section.  The lemmas and additional details for both subsections have been omitted for clarity, the full details can be found in Appendices \ref{apdx:counterex proofs} and \ref{apdx:homotopy proofs}.

We consider a simple 1-dimensional system ($X = U = \mathbb R, \gamma \in (0,1)$). Fix $\gamma$, we then choose $(A,B) = (0,1)$ and $(Q,R) = (1,R)$ as our system matrices (see \eref{eq:lqr dynamics}) for any $R \in (0,\gamma)$. Notice that this system is controllable and observable so there is an optimal control. For $a \in \mathbb R$ and $\delta>0$, we define the spike or tent function of width $2\delta$ centered at $a$:
\begin{equation*}
  \Lambda_{a,\delta}(x) \coloneqq
  \begin{cases}
  \frac{x-(a-\delta)}{\delta}\quad&\text{if } x\in \pq{a-\delta, a}\\
  \frac{(a+\delta)-x}{\delta}\quad&\text{if } x\in \pq{ a,a+\delta}\\
  0 &\text{else}
\end{cases}.
\end{equation*}
 Next, we define the control as
\begin{equ}\label{eq:cont example 1dcontrol}
u_{t}
=
\theta_0 x_t + \theta_1 F(x_t)
	\qquad \text{where }
F(x_t)
	\coloneqq
	\omega_0 |x_t| +
	\sum_{i=1}^3 \omega_i \Lambda_{a_i,\delta_i}(x_t),
\end{equ}
for a choice of parameters $\omega_0 \in (-1,0)$ and $\{a_i,\omega_i,\delta_i\}_{i=1}^3$ where $a_2,a_3 < \abs{a_1}$ that are \emph{fixed} during training of the model. We set
\begin{equ}\label{eq:cont example parameters}
  \omega_0 = -0.5 ,\,\,\,
  \delta_1 = 0.1 ,\,\,\,
  a_1 = -2 ,\,\,\,
  \omega_1 = 3 ,\,\,\,
  a_2 = 1.5 ,\,\,\,
  a_3 = 1.8 ,\,\,\,
  \delta_2 = \delta_3 = \delta ,\,\,\,
  \omega_2 = \omega_3 = 0.2,
\end{equ}
for a parameter $\delta$ to be chosen later.
The dynamics resulting from this choice of parameters are depicted in Figure \ref{fig:cont counterexample}. We note that the optimal policy for the LQR belongs to this policy class and corresponds to the parameters $(0, 0)$.

\begin{figure}
\centering
\includegraphics[width=0.75\textwidth]{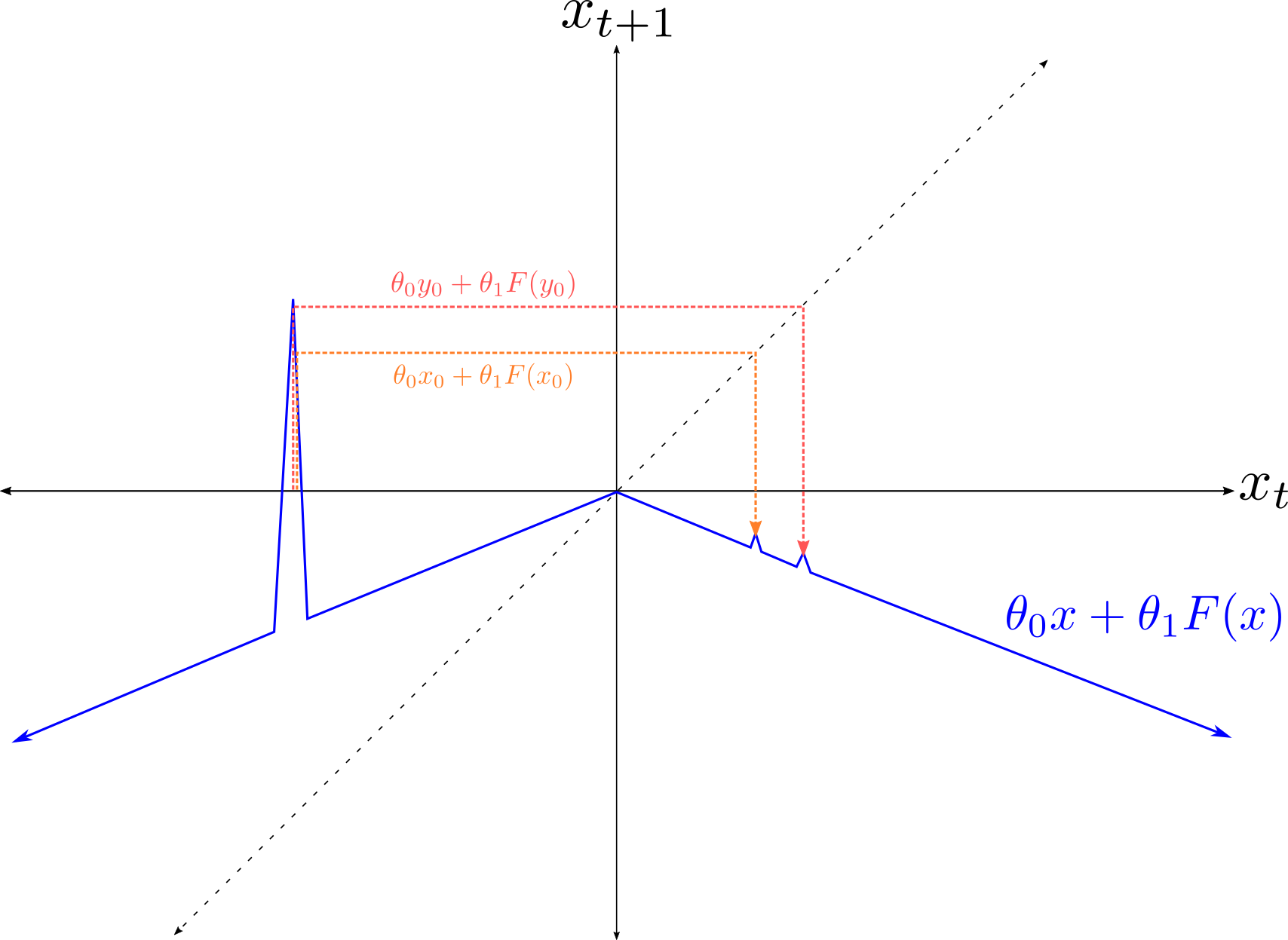}
\caption{A cobweb diagram for the first step of the LQR dynamics in the counterexample. The blue line represents the dynamics of the system at initialization $(\theta_0 = 0, \theta_1 = 1)$. Our initial distribution of states is heavily weighted around the two points ($x_0$ and $y_0$) that map to the two small tents on the right; this requires the points to lie somewhere in the domain of the spike on the left because all other states are mapped to negative numbers.}
\label{fig:cont counterexample}
\end{figure}

We choose the initial condition $(\theta_0, \theta_1) = (0,1)$ and define the distribution of the initial state as the following measure with support on $(-5,5)$
\begin{equ}\label{eq:cont example init state distrib}
\mu_0(x)
=
\frac{1-\epsilon}2 \delta_{x_0}+\frac{1-\epsilon}2 \delta_{y_0}+ \frac \epsilon {10},
\end{equ}
where $x_0 = \min F^{-1}(a_2)$ and $y_0 = \min F^{-1}(a_3)$,
\emph{i.e.}, we define $x_0$ and $y_0$ as the smaller of the two pre-images for $a_2$ and $a_3$, respectively.

\begin{restatable}{Prop}{CounterexampleProp}\label{prop:counterex local min}
Fix $\gamma \in (0,1)$ and $0 < R < \gamma$. For the LQR problem with $(A,B) = (0,1)$, $(Q,R) = (1,R)$, policy class \eref{eq:cont example 1dcontrol} with parameters chosen in \eref{eq:cont example parameters}, and initial state distribution $\mu_0$ from \eref{eq:cont example init state distrib}, there exists  $\delta > 0$ and $\epsilon > 0$ such that the point in parameter space $\theta = (0,1)$ is a local minimum of the LQR cost function.
\end{restatable}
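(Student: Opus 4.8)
The plan is to reduce the statement to a one-step ``amplification'' estimate at the narrow tents. Since $(A,B)=(0,1)$, the dynamics are the scalar recursion $x_{t+1}=\pi_\theta(x_t)=\theta_0 x_t+\theta_1 F(x_t)$, so a trajectory is simply the orbit of the piecewise-affine map $\pi_\theta$. At the initialization $\theta=(0,1)$ this map is $F$, and by the choice $x_0=\min F^{-1}(a_2)$, $y_0=\min F^{-1}(a_3)$ (both in the support of the tall spike $\Lambda_{a_1,\delta_1}$) the two atoms are launched in one step \emph{exactly} onto the tips $a_2,a_3$ of the narrow tents $\Lambda_{a_2,\delta},\Lambda_{a_3,\delta}$. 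From there $F$ takes the negative values $\mu:=\omega_0 a_2+\omega_2$, $\nu:=\omega_0 a_3+\omega_3$, after which every iterate lies in the region where $F(x)=\omega_0|x|$ acts as the gentle contraction $x\mapsto|\omega_0|x$ (recall $|\omega_0|<1$); so both nominal orbits decay geometrically to $0$, a ball around $0$ is forward invariant for $\theta$ near $(0,1)$, and the cost is finite at and near $(0,1)$. Writing $\theta=(\eta_0,1+\eta_1)$, the first images are the affine functions $x_1=a_2+s_1$, $y_1=a_3+t_1$ with $s_1=\eta_0 x_0+a_2\eta_1$ and $t_1=\eta_0 y_0+a_3\eta_1$; since $x_0\ne y_0$ and $a_2\ne a_3$, the map $\eta\mapsto(s_1,t_1)$ is a linear isomorphism, so $\max(|s_1|,|t_1|)\ge c\|\eta\|$ for some $c>0$ --- no perturbation of $\theta$ keeps both atoms pinned to their tips.

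The amplification estimate is that landing slightly off a narrow tip is severely penalized. For $|s_1|\le\delta$ the tip $a_2$ is a local maximum of $F$ and $F(a_2+s_1)=\mu+\omega_0 s_1-\tfrac{\omega_2}{\delta}|s_1|$, whence $x_2=\mu-\tfrac{\omega_2}{\delta}|s_1|+R_2$ with $|R_2|\le C_1\|\eta\|$ for a $C_1$ \emph{independent of $\delta$} (the only dangerous cross-term is $\eta_1\tfrac{\omega_2}{\delta}|s_1|$, bounded by $\omega_2|\eta_1|$ because $|s_1|\le\delta$). As $\mu<0$, $x_2^2$ exceeds its nominal value $\mu^2$ by at least $2|\mu|\tfrac{\omega_2}{\delta}|s_1|-O(\|\eta\|)$; and since past time $1$ the $x_0$-orbit stays in the contraction region, its full discounted cost equals $x_0^2+(\gamma+R)x_1^2+\dfrac{\gamma(\gamma+R)}{1-\gamma r_\eta^2}\,x_2^2$, where $r_\eta=\eta_0+(1+\eta_1)|\omega_0|$ is $O(\|\eta\|)$-close to $|\omega_0|$ and the prefactor of $x_2^2$ is a positive smooth function of $r_\eta$. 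Therefore, if $|s_1|\ge\kappa\delta\|\eta\|$ for a constant $\kappa$ to be chosen large, the $x_0$-atom's cost \emph{exceeds} its nominal value by at least $A_\kappa\|\eta\|$ with $A_\kappa$ proportional to $\kappa$; and in \emph{every} case it falls short by at most $C_2\|\eta\|$, where $C_2$ is $\kappa$- and $\delta$-independent, collecting only the genuinely $O(\|\eta\|)$ terms (the $(\gamma+R)x_1^2$ head term, the remainder $R_2$, and the variation of the $x_2^2$-prefactor). The same holds for the $y_0$-atom with $t_1,\nu,\omega_3$.

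To conclude, decompose $C(\theta)-C(0,1)=\tfrac{1-\epsilon}{2}(\Delta_{x_0}+\Delta_{y_0})+\epsilon\,\Delta_{\mathrm{unif}}$ into the increments from the three parts of $\mu_0$, and fix $\kappa$ with $A_\kappa>C_2+2$. If $\delta$ is small enough, $|s_1|$ and $|t_1|$ cannot both be $<\kappa\delta\|\eta\|$ (because $\max(|s_1|,|t_1|)\ge c\|\eta\|$), so the ``big'' one contributes $\ge A_\kappa\|\eta\|$ and the other $\ge-C_2\|\eta\|$, giving $\Delta_{x_0}+\Delta_{y_0}\ge 2\|\eta\|$. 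Granting that $\eta\mapsto\Delta_{\mathrm{unif}}(\eta)$ is $L_\delta$-Lipschitz near $(0,1)$ for some finite $L_\delta$, we obtain $C(\theta)-C(0,1)\ge\bigl(1-\epsilon(1+L_\delta)\bigr)\|\eta\|$, so it suffices to pick $\epsilon<1/(1+L_\delta)$ and then restrict $\|\eta\|$ to be small enough that all first images stay inside the narrow tents and all orbits have entered the contraction region. On that neighbourhood $C(\theta)>C(0,1)$ for every $\theta\ne(0,1)$, so $(0,1)$ is a strict local minimum --- and, since the LQR optimum is $\theta=(0,0)$, a spurious one.

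I expect the main obstacle to be the Lipschitz bound $L_\delta$ for the spread-out mass, which requires uniform control of the orbits of $\pi_\theta$ over all of $(-5,5)$: one must show that from every $x\in(-5,5)$ the orbit reaches the contraction region $\{|x|<1/2\}$ within a number of steps bounded uniformly in $x$ and $\delta$, and visits the steep tents only boundedly often (outside a fixed compact interval $\pi_\theta$ contracts by a fixed factor; the tall spike is hit at most once, since its preimages lie near $\pm4$, which would force an earlier iterate of modulus $>5$; each narrow-tent visit is followed by decay to $0$). Iterating the Lipschitz estimate for the piecewise-affine $\pi_\theta$ along such an orbit --- legitimate even at the kinks of $F$ --- then bounds $|x_t(\theta)-x_t(\theta')|$ by a quantity summable against $\gamma^t$ and uniform in $x$. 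That, and the bookkeeping separating the constants allowed to depend on $\delta$ (only $L_\delta$ and the neighbourhood radius) from those bounding a possible \emph{decrease} of $C$ (which must be $\delta$-free), are where the care lies; the amplification estimate itself is elementary once this geometry is set up.
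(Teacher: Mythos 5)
Your proposal is correct and follows essentially the same route as the paper's: the non-degeneracy of $\eta\mapsto(s_1,t_1)$ is the paper's statement that the two cones meet only at the origin (Lemma \ref{lem:cones dont overlap}), the $\tfrac{\omega_2}{\delta}|s_1|$ amplification at the tent tips with $\mu,\nu<0$ is the content of Lemmas \ref{lem:bound cost diff outside cone} and \ref{lem:bad change is stronger than good change} (the paper fixes the cone half-width $\alpha$ independently of $\delta$ so both gain and loss scale like $1/\delta$ with favorable constants, whereas you take the half-width $\kappa\delta$ so gains are $\propto\kappa$ and losses are $\delta$-free --- a cleaner bookkeeping of the same estimates), and the Lipschitz bound $L_\delta$ you grant for the spread-out mass is exactly Lemma \ref{lem:bound contribution of uniform measure}, with the same final choice of $\epsilon$ depending on $\delta$. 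The one caveat is that invertibility of $\eta\mapsto(s_1,t_1)$ requires $x_0a_3\neq y_0a_2$ rather than merely $x_0\neq y_0$ and $a_2\neq a_3$; this holds for the stated numerical values, and the paper's Lemma \ref{lem:cones dont overlap} implicitly relies on the same condition.
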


To prove that the point $(0,1)$ is a local minimum of the cost function, and therefore a suboptimal fixed point of the policy gradient algorithm, we show that any infinitesimal change in $\theta_0$ and $\theta_1$ around $(\theta_0,\theta_1) = (0,1)$ will increase such cost function.

To understand why this is the case, consider first the trajectory of the points $x_0$ and $y_0$. It is easy to see that, by the design of the control, any small perturbation of the parameters that change the image of $x_0$ and $y_0$ will result in a state of larger absolute value, resulting in turn in a larger cost for those trajectories. On the other hand, it is also possible that a change in parameters actually lowers the cost for some trajectories starting in $x \in [-5,5] \setminus \set{x_0, y_0}$. Still, by our choice of $\epsilon$ it is possible to show that the increase in cost for $x_0, y_0$ outweighs the decrease. More specifically, the proof follows the following steps (the full details are presented in Appendix \ref{apdx:counterex proofs}):
\begin{enumerate}
\item For both $x_0, y_0$, we first prove that we can choose the parameter $\delta > 0$ (see \eref{eq:cont example parameters}) such that outside of a small cone in parameter space, we can bound the change in cost from below to show that it is always positive.

\item We then guarantee that we can pick the parameters of the model such that the cones described above corresponding to $x_0$ and $y_0$ only intersect at the origin.

\item Next, we show that the cost difference is still positive even if the infinitesimal change in $(\theta_0, \theta_1)$ belongs inside one cone and outside of the other.

\item Lastly, we bound the contribution of the uniform distribution term in $\mu_0$, showing that it is negligible with respect to any of the contributions to the cost function from the trajectories starting at $x_0$ or $y_0$.
\end{enumerate}

Qualitatively, we can plot the cost landscapes for our example around $\theta = (0,1)$ to see that this point indeed is a local minimum (see Figure \ref{fig:counterex cost landscape} in the appendix).
%
% \begin{figure}
%     \centering
%     \begin{subfigure}{0.8\textwidth}
%         \centering
%         \includegraphics[width=\linewidth]{figures/costlandscape_large}
%         \caption{Notice, in the top-right figure, the two ``good'' directions in parameter space.}
%     \end{subfigure}
%     \vskip\baselineskip
%     \begin{subfigure}{0.8\textwidth}
%         \centering
%         \includegraphics[width=\linewidth]{figures/costlandscape_small}
%         \caption{Close-up of the above plots.}
%     \end{subfigure}
%     \caption{Two views of the landscape of the cost function in a neighborhood around $\theta = (0,1)$.}
%     \label{fig:counterex cost landscape}
% \end{figure}
%
The example we have described is only one member of a family of possible counterexamples. The proofs we provide have used some of the specifics of our example; however, they can be very easily generalized to describe a set of parameters and constraints necessary for the point $\theta = (0,1)$ to correspond to a local minimum of the cost function. We only present this specific example as it is sufficient to demonstrate the failure of vanilla policy gradients in the LQR.

\subsection{Convergence Results}~\\

In this section we present our convergence result for policies of the form
\begin{equ}
\pi_{\theta}(x_t) = \sum_{k=1}^{d} \theta_k f_k(x_t)
\end{equ}
where each $f_k: \Rr^n \to \Rr^m$ is a non-linear, globally Lipschitz function and $\theta_k \in \Rr$ is a parameter to be learned. We prove our results under Assumptions \ref{asm:linear indep} and \ref{asm:linear functions in span}, restated here for convenience.

\AsmOne*

\AsmTwo*

We prove the convergence of policy gradient methods for the policy class of \eref{eq:policy class} by using the homotopy idea discussed in the introduction. Informally, our idea is that if one updates $\gamma$ by increasing it an infinitesimally small amount to $\gamma'$, one might expect that the optimal policies for the corresponding discount factors might not be too far from each other. Then if these policies, and importantly their parameterizations, are sufficiently close, policy gradient methods will yield convergence from the $\gamma$-optimal policy to the $\gamma'$-optimal policy. It's important to remember that one should not expect this behavior in general; we have the simplicity of the LQR to thank for the continuity of the optimal policy in the discount factor, but in more challenging settings such as Chess or Go, this may not be the case.\vspace{15pt}

\begin{algorithm}[H]
\SetAlgoLined
\KwResult{Optimal policy of the LQR problem}
$\hat \theta^* = \theta(0)$, $\{\gamma_n\}_{n=1}^N$\;
\For{$n \in \{1,\dots, N\}$}{
	$\gamma \leftarrow \gamma_n$ \;
	$\hat \theta^* \leftarrow \text{PG}(\hat \theta^*,\gamma)$ \;
}
\Return{$\hat \theta^*$}
\caption{Homotopy algorithm for LQR. Here $\text{PG}(\theta,\gamma)$ denotes the fixed point of the policy gradient algorithm for the given LQR problem with discount factor $\gamma$ and initial condition $\theta$.}
\label{alg:homotopy}
\end{algorithm}\vspace{15pt}

In other words, given some initial parameters $\theta(0)$ and an increasing sequence of discount factors $\gamma_n \in [0,1]$, we run the policy gradient algorithm until convergence to the optimal policy of the first discount factor. We then use this optimal policy as the initial policy for the system with the next discount factor, and let the policy gradient algorithm converge to the corresponding optimum. We continue iterating until we have converged to the optimal policy of the undiscounted LQR. To prove this algorithm converges, we need to prove that policy gradients yield convergence from the $\gamma$-optimal policy to the $\gamma'$-optimal policy if $\gamma$ and $\gamma'$ are sufficiently close, and that for any arbitrary initialization, the first iteration of the algorithm converges.

The proof is split into two main steps:
\begin{enumerate}
\item Prove convergence of arbitrary initializations to the $\gamma=0$ optimal policy.

\item Prove local convergence of policy gradient methods. In other words, we show that for discount factors $\gamma'$ and $\gamma$ sufficiently close, policy gradient methods yield convergence from the $\gamma$-optimal policy to the $\gamma'$-optimal policy.
\end{enumerate}
The first step brings policies into the ``homotopy'' regime, while the second step shows that we can iterate this process to go from the $\gamma=0$ optimal policy to the $\gamma=1$ (undiscounted) optimal policy. {We remark that it is also reasonable to ignore this first step and to initialize the policy to the zero function as is occasionally done in practice.}

In the statements of the theorems and their proofs, we recall that $\theta\in \mathbb R^d $ denotes the parameters of the model and $\theta_\gamma^*\in \mathbb R^d$ denotes the optimal parameters for the given value of $\gamma$, which exist and are unique by Assumptions \ref{asm:linear indep}, \ref{asm:linear functions in span}. %\aa{We use $\dtheta = (\dtheta_1, \dots \dtheta_d)$ to denote a perturbation of the parameters. We study the behavior of the parameters in the setting where $\theta = \theta^* + \dtheta$.}

\begin{restatable}[Convergence at $\gamma=0$]{Thm}{ThmConvOfFirstIter}
\label{thm:convergence of first iteration}
Let $\set{f_k}_{k=1}^{d}$ be a set of globally $\text{Lip}(f_k)$-Lipschitz continuous functions, and let $\rho_0$ (the distribution of the initial state) have full support on $\mathbb R^d$. Under Assumptions \ref{asm:linear indep}, \ref{asm:linear functions in span}, any random initialization of the parameters $\theta$ will converge to the optimal policy for discounted LQR with $\gamma = 0$.
\end{restatable}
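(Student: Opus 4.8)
The plan rests on the observation that at $\gamma=0$ the infinite-horizon objective \eqref{eq:cost} collapses onto its $t=0$ term, so the cost becomes a genuine quadratic form in the parameters:
\[
C(\theta)\;=\;\mathbb{E}_{x_0\sim\rho_0}\!\left[x_0^\intercal Q x_0\right]\;+\;\mathbb{E}_{x_0\sim\rho_0}\!\left[\pi_\theta(x_0)^\intercal R\,\pi_\theta(x_0)\right]\;=\;c+\theta^\intercal M\theta,
\]
where $c=\mathbb{E}_{x_0\sim\rho_0}[x_0^\intercal Q x_0]$ is a constant and $M\in\Rr^{d\times d}$ has entries $M_{kl}=\mathbb{E}_{x_0\sim\rho_0}[f_k(x_0)^\intercal R\,f_l(x_0)]$. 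Consequently the policy gradient is simply $\nabla C(\theta)=2M\theta$ with constant Hessian $2M$, and everything reduces to showing $M\succ0$. First I would record that $M$ is well-defined: global Lipschitzness gives $\norm{f_k(x)}\le\norm{f_k(0)}+\mathrm{Lip}(f_k)\norm{x}$, so each integrand grows at most quadratically in $\norm{x_0}$ and $M$ is finite provided $\rho_0$ has a finite second moment --- the same hypothesis that makes $C$ itself finite.

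To see $M\succ0$, fix $v\in\Rr^d$ and set $g_v:=\sum_{k=1}^d v_k f_k$; positive-definiteness of $R$ gives $v^\intercal M v=\mathbb{E}_{x_0\sim\rho_0}[g_v(x_0)^\intercal R\,g_v(x_0)]\ge\lambda_{\min}(R)\,\mathbb{E}_{x_0\sim\rho_0}[\norm{g_v(x_0)}^2]\ge0$, with equality only if $g_v=0$ $\rho_0$-almost everywhere. Since $\rho_0$ has full support and $g_v$ is continuous, this forces $g_v\equiv0$, and then Assumption~\ref{asm:linear indep} yields $v=0$. Hence $C$ is a strictly convex quadratic with a unique critical point solving $M\theta=0$, i.e. $\theta=0$. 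Finally this minimizer is exactly $\theta_0^*$: evaluating \eqref{eq:lqr optimal control} at $\gamma=0$ gives $K_0^*=0$ (and $P_0=Q$ from the DARE), and by Assumption~\ref{asm:linear indep} the only parameter vector representing the zero policy is $\theta=0$; Assumption~\ref{asm:linear functions in span} enters only to confirm $K_0^*$ lies in the class, which is trivial here.

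With the $\gamma=0$ landscape identified as a strictly convex quadratic, convergence of policy gradient from an arbitrary initialization is immediate: the gradient flow $\dot\theta=-2M\theta$ gives $\theta(t)=e^{-2Mt}\theta(0)\to0$, and gradient descent $\theta_{j+1}=(I-2\eta M)\theta_j$ converges linearly to $0$ for any $0<\eta<1/\lambda_{\max}(M)$; in particular the only fixed point of the policy gradient dynamics is $\theta_0^*$. Since $\theta\mapsto\pi_\theta$ is a linear injection under Assumption~\ref{asm:linear indep}, convergence of the parameters to $\theta_0^*$ is equivalent to convergence of the policy to $K_0^*$. This is the easy base case of the overall program; the only genuine care needed is the integrability bookkeeping (handled by the Lipschitz growth bound and a second-moment assumption on $\rho_0$) and the upgrade from ``$g_v=0$ $\rho_0$-a.e.'' to ``$g_v\equiv0$'', which is precisely where full support of $\rho_0$ and continuity of the $f_k$ are used before invoking linear independence.
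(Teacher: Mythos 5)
Your proof is correct and rests on the same essential ingredients as the paper's: at $\gamma=0$ the cost collapses to $\mathbb{E}_{x_0\sim\rho_0}[x_0^\intercal Qx_0+\pi_\theta(x_0)^\intercal R\,\pi_\theta(x_0)]$, and the crux in both arguments is that full support of $\rho_0$, continuity of the $f_k$, Assumption~\ref{asm:linear indep}, and positive definiteness of $R$ together force $\sum_k v_kf_k\equiv0\Rightarrow v=0$. The packaging differs, though. The paper runs a gradient-domination-style computation: it differentiates the cost gap along the gradient flow to obtain $\tfrac{d}{ds}\left(C_0(\theta(s))-C_0(\theta_0^*)\right)=-\Tr\left(\nabla_\theta C_0^\intercal\nabla_\theta C_0\right)$ and then shows $\nabla_\theta C_0(\theta)=0\iff\theta=\theta_0^*$, concluding convergence from the absence of spurious critical points. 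You instead identify the landscape globally as the strictly convex quadratic $c+\theta^\intercal M\theta$ with $M_{kl}=\mathbb{E}[f_k(x_0)^\intercal R f_l(x_0)]$ and $M\succ0$, which turns the gradient flow into the linear ODE $\dot\theta=-2M\theta$. Your version buys an explicit exponential rate governed by $\lambda_{\mathrm{min}}(M)$ and a discrete-time step-size condition for free, and it also quietly closes a small gap in the paper's argument: knowing that the only critical point is the global minimum does not by itself rule out the gradient-flow trajectory escaping to infinity, whereas coercivity of $\theta\mapsto\theta^\intercal M\theta$ with $M\succ0$ does. The one piece of bookkeeping you correctly make explicit (and the paper leaves implicit in assuming the cost is finite) is that $M$ is well defined, which requires $\rho_0$ to have a finite second moment given the Lipschitz growth of the $f_k$.
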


We first note that the discount factor of this first iteration does not need to be $\gamma=0$; however, to make it easier to prove convergence of the first iteration, we can assume that $\gamma=0$. With such high myopia, the learning task becomes much easier, since the optimal policy for the LQR with $\gamma=0$ is simply the zero function.

The proof uses the fact that we can write a differential equation describing the evolution of the cost difference $C_\gamma(\theta(s)) - C_\gamma(\theta_\gamma^*)$ over time, where $C_\gamma(\theta)$ denotes the cost for the policy $\pi_\theta$ with discount factor $\gamma$. This results in an expression that is similar to gradient-domination bounds and from this the conclusion follows.

We now state the result about the local convergence of policy gradient methods in a neighborhood of the optimal policy for the chosen discount factor:

\begin{restatable}[Convergence Near $\gamma$-Optimality]{Thm}{ThmLocalConv} \label{thm:local nonlinear convergence}
Under the same conditions as Theorem \ref{thm:convergence of first iteration}, for any $\gamma \in (0,1)$ there exists $\delta>0$, $\lambda > 0$ such that for all initial conditions $\theta(0)$ with $\dtheta(0) \coloneqq \|\theta(0) - \theta_{\gamma}^* \| < \delta$,
\begin{equation*}
C_\gamma(\theta(s)) - C_\gamma(\theta_\gamma^*)
\leq
e^{-s\lambda}
\pc{ C_\gamma(\theta(0)) - C_\gamma(\theta_\gamma^*) },
\end{equation*}
and that $\lim_{s \to \infty} \theta(s) = \theta_\gamma^*$.
\end{restatable}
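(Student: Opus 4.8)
The plan is to run a localised gradient-domination argument (of Polyak--Lojasiewicz type) for the gradient flow $\dot\theta(s) = -\nabla C_\gamma(\theta(s))$ around $\theta_\gamma^*$. The engine is the exact LQR cost-difference identity: writing $g_\theta(x) \coloneqq \pi_\theta(x) - K_\gamma^* x = \sum_{k} \xi_k\, f_k(x)$ with $\xi \coloneqq \theta - \theta_\gamma^*$, and $W_\gamma \coloneqq R + \gamma B\transpose P_\gamma B \succ 0$, one has
\[
C_\gamma(\theta) - C_\gamma(\theta_\gamma^*) \;=\; \Ex{x_0\sim\rho_0}{\,\sum_{t\ge 0}\gamma^t\, g_\theta(x_t)\transpose W_\gamma\, g_\theta(x_t)},
\]
where $\{x_t\}$ is the closed loop under $\pi_\theta$; this is the advantage/performance-difference decomposition about the optimal \emph{linear} policy, and it makes the nonnegativity of the left-hand side manifest. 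Since $K_\gamma^* \in \text{Span}\{f_k\}$ (Assumption \ref{asm:linear functions in span}), $\pi_{\theta_\gamma^*} = K_\gamma^*$ is the global LQR optimum, so $\theta_\gamma^*$ is a global minimiser of $C_\gamma$ on $\Rr^d$ and $\nabla C_\gamma(\theta_\gamma^*) = 0$. First I would pin down the local geometry (this is the content of Lemma \ref{lem:secondorderapprox}): expanding the identity to second order in $\xi$ --- the trajectory correction enters only at third order because the integrand is already quadratic and $\mathrm{Lip}(g_\theta) = O(\norm{\xi})$ --- gives $C_\gamma(\theta) - C_\gamma(\theta_\gamma^*) = \xi\transpose H\,\xi + O(\norm{\xi}^3)$ with $H_{kl} = \Ex{x_0\sim\rho_0}{\sum_t\gamma^t f_k(\bar x_t)\transpose W_\gamma f_l(\bar x_t)}$ and $\bar x_t = (A+BK_\gamma^*)^t x_0$. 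Discarding all but the $t=0$ term, $\xi\transpose H\xi \ge \Ex{x_0}{\norm{W_\gamma^{1/2}\sum_k \xi_k f_k(x_0)}^2}$, which vanishes iff $\sum_k \xi_k f_k$ vanishes on $\supp(\rho_0)$, iff (by continuity of the $f_k$ and full support of $\rho_0$, then Assumption \ref{asm:linear indep}) $\xi = 0$; hence $H \succ 0$. Consequently there are constants $\delta_0>0$ and $0<\mu\le L<\infty$ with $\mu I \preceq \nabla^2 C_\gamma(\theta)\preceq L I$ on the ball $B(\theta_\gamma^*,\delta_0)$, yielding on that ball both the two-sided bound $\tfrac\mu2\norm{\theta-\theta_\gamma^*}^2 \le C_\gamma(\theta)-C_\gamma(\theta_\gamma^*) \le \tfrac L2\norm{\theta-\theta_\gamma^*}^2$ and the gradient-domination inequality $\norm{\nabla C_\gamma(\theta)}^2 \ge 2\mu\,\pc{C_\gamma(\theta)-C_\gamma(\theta_\gamma^*)}$.

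With these in hand the argument closes in two short steps. Set $\delta \coloneqq \delta_0\sqrt{\mu/L}$ and take $\dtheta(0) = \norm{\theta(0)-\theta_\gamma^*} < \delta$. Along the flow $C_\gamma(\theta(s))$ is nonincreasing (its derivative is $-\norm{\nabla C_\gamma}^2\le 0$), so while $\theta(s)\in B(\theta_\gamma^*,\delta_0)$ we get $\tfrac\mu2\norm{\theta(s)-\theta_\gamma^*}^2 \le C_\gamma(\theta(s))-C_\gamma(\theta_\gamma^*) \le C_\gamma(\theta(0))-C_\gamma(\theta_\gamma^*) \le \tfrac L2\delta^2 = \tfrac\mu2\delta_0^2$, i.e. $\norm{\theta(s)-\theta_\gamma^*}<\delta_0$ strictly; a first-exit-time/continuity argument then shows the trajectory never reaches the boundary of $B(\theta_\gamma^*,\delta_0)$, so it remains inside for all $s$, which also secures global existence of the flow. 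On this ball $\frac{d}{ds}\pc{C_\gamma(\theta(s))-C_\gamma(\theta_\gamma^*)} = -\norm{\nabla C_\gamma(\theta(s))}^2 \le -2\mu\pc{C_\gamma(\theta(s))-C_\gamma(\theta_\gamma^*)}$, so by Gr\"onwall $C_\gamma(\theta(s))-C_\gamma(\theta_\gamma^*)\le e^{-2\mu s}\pc{C_\gamma(\theta(0))-C_\gamma(\theta_\gamma^*)}$, which is the claim with $\lambda = 2\mu$; and $\norm{\theta(s)-\theta_\gamma^*}^2 \le \tfrac2\mu\pc{C_\gamma(\theta(s))-C_\gamma(\theta_\gamma^*)} \le \tfrac2\mu e^{-2\mu s}\pc{C_\gamma(\theta(0))-C_\gamma(\theta_\gamma^*)} \to 0$, giving $\theta(s)\to\theta_\gamma^*$.

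All the difficulty sits in the first paragraph, i.e. in establishing the local $C^2$ structure with a strictly positive Hessian. One must first check that for $\theta$ near $\theta_\gamma^*$ the discounted cost is finite and smooth: this uses that $\sqrt\gamma(A+BK_\gamma^*)$ has spectral radius strictly less than $1$ (a consequence of the DARE), so that a small globally Lipschitz perturbation of the linear closed loop still produces trajectories with $\sum_t\gamma^t\norm{x_t}^2<\infty$ --- a Lyapunov estimate in a norm adapted to $\sqrt\gamma(A+BK_\gamma^*)$ --- together with the regularity needed to differentiate the closed-loop trajectory in $\theta$. Then one must bound the cubic remainder in the expansion of the cost-difference identity uniformly on $B(\theta_\gamma^*,\delta_0)$, so that positivity of $H$ at the centre upgrades to the uniform lower bound $\mu I$ on a whole ball; this requires uniform-in-$t$ control of the trajectory deviations $\norm{x_t^\theta - \bar x_t}$ and of their interchange with the sum and the expectation. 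Everything downstream --- the gradient-domination inequality, the trapping estimate, and Gr\"onwall --- is routine. It is worth stressing the contrast with the $\gamma=0$ case of Theorem \ref{thm:convergence of first iteration}, where $C_0(\theta) - C_0(0) = \Ex{x_0}{\pi_\theta(x_0)\transpose R\,\pi_\theta(x_0)}$ is a \emph{globally} positive-definite quadratic in $\theta$ (again by Assumption \ref{asm:linear indep} and full support of $\rho_0$), so convergence is global there; for $\gamma>0$ only the \emph{local} quadratic description survives, which is precisely why this theorem is stated in a neighbourhood of $\theta_\gamma^*$ and why the homotopy over $\gamma$ is needed to string these local basins together.
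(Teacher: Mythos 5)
Your proposal is correct, and it reaches the same destination as the paper --- a locally strictly convex landscape at $\theta_\gamma^*$ plus Gr\"onwall --- but by a genuinely different route in the two places where the work actually happens. To get the local quadratic structure, the paper (Lemmas \ref{lem:firstorderapprox} and \ref{lem:secondorderapprox}) performs a direct Volterra expansion of the closed-loop trajectory in $\dtheta$ and grinds through the cost term by term, using the DARE to show all cross terms cancel at optimality; you instead invoke the performance-difference/advantage identity about the optimal policy, which makes the nonnegativity of $C_\gamma(\theta)-C_\gamma(\theta_\gamma^*)$ manifest from the start and collapses the first-order cancellation into the single fact that the advantage of $\pi^*$ is a quadratic in $\pi_\theta(x)-K_\gamma^*x$. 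The resulting Hessians differ slightly in form (yours carries $R+\gamma B\transpose P_\gamma B$ at every time step along the optimal trajectory, the paper's carries $R$ at every step plus a $B\transpose P_\gamma B$ contribution only at $t=0$), but both are positive definite for the identical reason --- the $t=0$ term together with full support of $\rho_0$ and Assumption \ref{asm:linear indep} --- so the downstream argument is unaffected. For the dynamics, the paper descends the quadratic Lyapunov function $U(s)=\tfrac12\dtheta(s)\transpose H_\gamma\dtheta(s)$ and identifies it with the cost gap up to $o(\norm{\dtheta}^2)$, whereas you run a Polyak--Lojasiewicz inequality directly on the cost gap; these are interchangeable here, and your version has the minor advantage of producing the stated inequality for $C_\gamma(\theta(s))-C_\gamma(\theta_\gamma^*)$ without a final translation step. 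Your explicit trapping/first-exit argument (with $\delta=\delta_0\sqrt{\mu/L}$) ensuring the flow never leaves the ball where the local estimates hold is a genuine addition: the paper absorbs this into the phrase ``for $\delta$ sufficiently small,'' and your treatment is the more complete one. Both proofs share the same residual burden, which you correctly flag: uniform-in-$t$ control of the trajectory perturbations so that the cubic remainder is genuinely $o(\norm{\dtheta}^2)$ uniformly near $\theta_\gamma^*$.
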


The main idea behind the proof of this result is to leverage a Taylor expansion of the cost function $C_{\gamma}(\,\cdot\,)$ around its minimum $\theta_{\gamma}^*$ for any fixed value of $\gamma \in (0,1)$. We show that by considering the second order Taylor expansion in a small neighborhood around optimality, the landscape of the policy gradient algorithm is locally convex. This allows to show that from any point within this neighborhood, policy gradient methods experience exponential convergence to the $\gamma$-optimal policy by applying Gr\"onwall's inequality.
%We consider the second-order expansion because it allows us to write an ODE for the difference between the current and optimal costs.

Lastly, we want to show that by continuously updating the discount factor, we can ensure that we are always within a small neighborhood of the optimal policy for the \emph{next} discount factor, eventually reaching the undiscounted optimal.

\begin{Thm}[Convergence to Undiscounted Optimal]
\label{thm:homotopy for nonlinear case}
For any initial parameters $\theta_0$, Algorithm \ref{alg:homotopy} will converge to the undiscounted optimal policy $\theta^*$.
\end{Thm}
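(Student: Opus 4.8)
The plan is to run the induction suggested by the structure of Algorithm \ref{alg:homotopy}: writing $\hat\theta^{(0)} = \theta_0$ and $\hat\theta^{(n)} = \mathrm{PG}(\hat\theta^{(n-1)},\gamma_n)$ for $n=1,\dots,N$, I will exhibit a finite increasing grid of discount factors $0 = \gamma_1 < \gamma_2 < \dots < \gamma_N = 1$ (fed to the algorithm) along which $\hat\theta^{(n)} = \theta_{\gamma_n}^*$ for every $n$, so that $\hat\theta^{(N)} = \theta_1^* = \theta^*$. The base case $n=1$ is immediate: since $\gamma_1 = 0$ and $\rho_0$ has full support, Theorem \ref{thm:convergence of first iteration} gives $\mathrm{PG}(\theta_0,0) = \theta_0^*$ for every initialization $\theta_0$, which is the first point inside the ``homotopy'' regime.

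For the inductive step, assume $\hat\theta^{(n-1)} = \theta_{\gamma_{n-1}}^*$ and let $\delta(\gamma) > 0$ denote the radius of the basin of attraction of $\theta_\gamma^*$ supplied by Theorem \ref{thm:local nonlinear convergence} (for the endpoint $\gamma=1$ we invoke the same statement, which remains valid under the standing boundedness assumption since $K_1^*$ is stabilizing, or else we run the argument up to the largest admissible discount factor; see the last paragraph). Theorem \ref{thm:local nonlinear convergence} then yields $\mathrm{PG}(\theta_{\gamma_{n-1}}^*,\gamma_n) = \theta_{\gamma_n}^*$ as soon as $\|\theta_{\gamma_{n-1}}^* - \theta_{\gamma_n}^*\| < \delta(\gamma_n)$, and by Lemma \ref{lem:homotopy} the left-hand side is made arbitrarily small by taking $\gamma_n$ close to $\gamma_{n-1}$. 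So the induction closes provided the grid $\{\gamma_n\}$ is fine enough, uniformly in $n$.

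To produce such a grid in finitely many steps I will use compactness of $[0,1]$, the key point being that $\delta(\gamma)$ can be taken bounded below by a single $\delta_* > 0$ on all of $[0,1]$. Inspecting the proof of Theorem \ref{thm:local nonlinear convergence}, $\delta(\gamma)$ is assembled from $\lambda_{\min}\bigl(\nabla^2 C_\gamma(\theta_\gamma^*)\bigr)$ --- strictly positive by Assumption \ref{asm:linear indep} (local strict convexity, cf.\ Lemma \ref{lem:secondorderapprox}) and continuous in $\gamma$ via Lemma \ref{lem:continuity of P} --- together with bounds on the higher-order terms of the Taylor expansion of $C_\gamma$ around $\theta_\gamma^*$ that are locally uniform in $\gamma$ (at $\gamma = 0$ the cost is exactly quadratic in $\theta$, so nothing degenerates at the left endpoint), and with the continuity of $\gamma \mapsto \theta_\gamma^*$ (Lemma \ref{lem:homotopy}); each ingredient is continuous on the compact interval, so the relevant infima are strictly positive. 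Granting $\delta_* > 0$, uniform continuity of $\gamma \mapsto \theta_\gamma^*$ on $[0,1]$ (Lemma \ref{lem:homotopy}) furnishes $N$ with $|\gamma - \gamma'| \le 1/(N-1) \Rightarrow \|\theta_\gamma^* - \theta_{\gamma'}^*\| < \delta_*$; taking $\gamma_n = (n-1)/(N-1)$ makes every step above legitimate and gives $\hat\theta^{(N)} = \theta^*$. Equivalently, one may phrase this as a continuation argument: the set of discount factors reachable from $0$ by a finite valid chain of policy-gradient runs is nonempty and is both relatively open and relatively closed in $[0,1]$ once $\delta(\cdot)$ is locally bounded below, hence equals $[0,1]$.

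The main obstacle is exactly this last point: upgrading the \emph{pointwise} positivity $\delta(\gamma) > 0$ of Theorem \ref{thm:local nonlinear convergence} to a bound that holds uniformly along the homotopy, so that only finitely many intermediate discount factors are needed. This amounts to tracking the $\gamma$-dependence of the constants in the proof of Theorem \ref{thm:local nonlinear convergence} --- continuity of $\theta_\gamma^*$ and of $\nabla^2 C_\gamma(\theta_\gamma^*)$, a uniform positive lower bound on its smallest eigenvalue, and a locally uniform remainder bound for the second-order Taylor expansion of $C_\gamma$. A secondary nuisance is the right endpoint $\gamma = 1$, where Theorem \ref{thm:local nonlinear convergence} is stated on the open interval: one either checks that its proof goes through at $\gamma = 1$ under the controllability/observability and boundedness hypotheses, or settles for convergence to within arbitrarily small error by stopping the grid just short of $1$ and invoking continuity of $\theta_\gamma^*$ at $1$ (Lemma \ref{lem:homotopy}).
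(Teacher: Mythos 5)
Your proposal follows essentially the same route as the paper's proof: Theorem \ref{thm:convergence of first iteration} handles the base case at $\gamma=0$, and Theorem \ref{thm:local nonlinear convergence} combined with the continuity of $\gamma\mapsto\theta_\gamma^*$ (Lemma \ref{lem:homotopy}, Corollary \ref{cor:continuity of K}) closes the inductive step. In fact you are more careful than the paper, whose proof ends with ``one can iterate this process'' without addressing the point you correctly isolate as the main obstacle --- that the basin radius $\delta(\gamma)$ from Theorem \ref{thm:local nonlinear convergence} must be bounded below uniformly in $\gamma$ (or at least lower semicontinuously) for a finite grid to suffice, and that the endpoint $\gamma=1$ needs separate treatment since that theorem is stated on the open interval; your compactness/continuation sketch is the right way to fill this in, though it still requires verifying the claimed continuity of $\lambda_{\min}(H_\gamma)$ and of the Taylor remainder bounds in $\gamma$.
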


\begin{Cor}
\label{cor:homotopy for nonlinear case}
There exists an increasing sequence $\{\gamma_n\}_n$ of discount factors such that Algorithm \ref{alg:homotopy} converges to the global optimum of the undiscounted LQR.
\end{Cor}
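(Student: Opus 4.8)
The statement refines Theorem~\ref{thm:homotopy for nonlinear case}: it asserts that a discount schedule $\{\gamma_n\}$ exists for which Algorithm~\ref{alg:homotopy} --- which takes the schedule as an input --- converges to the undiscounted optimum. The plan is to build such a schedule explicitly by a continuation (bootstrapping) argument along the discount factor, gluing together the three convergence statements already established: Theorem~\ref{thm:convergence of first iteration} at $\gamma = 0$, Theorem~\ref{thm:local nonlinear convergence} near a fixed $\gamma$, and Lemma~\ref{lem:homotopy} (continuity of the optimal parameters in $\gamma$).

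The base case is $\gamma_1 = 0$: by Theorem~\ref{thm:convergence of first iteration} the policy gradient flow from the arbitrary initial parameters converges to $\theta_0^*$ (the zero policy). For the inductive step, assume the algorithm currently sits at $\theta_{\gamma_n}^*$. Theorem~\ref{thm:local nonlinear convergence}, applied at a candidate value $\gamma' \in (\gamma_n,1]$, supplies a radius $\delta(\gamma') > 0$ such that $\mathrm{PG}(\,\cdot\,,\gamma')$ started anywhere in $B\!\left(\theta_{\gamma'}^*,\delta(\gamma')\right)$ converges to $\theta_{\gamma'}^*$; and Lemma~\ref{lem:homotopy} (continuity of $\gamma \mapsto \theta_\gamma^*$ on all of $[0,1]$, endpoints included) lets me pick $\gamma_{n+1} \in (\gamma_n,1]$ close enough to $\gamma_n$ that $\|\theta_{\gamma_n}^* - \theta_{\gamma_{n+1}}^*\| < \delta(\gamma_{n+1})$, so that the next call of Algorithm~\ref{alg:homotopy} moves from $\theta_{\gamma_n}^*$ to $\theta_{\gamma_{n+1}}^*$.

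It then remains to show this chain can be driven all the way to $\gamma = 1$, i.e.\ that the increasing sequence does not stall below $1$. I would argue this with the usual supremum/continuation dichotomy: let $\bar\gamma$ be the supremum of discount factors reachable by a finite admissible schedule. A \emph{closedness} step shows $\bar\gamma$ is itself reachable: take reachable $\gamma_n \uparrow \bar\gamma$; by Lemma~\ref{lem:homotopy}, $\theta_{\gamma_n}^* \to \theta_{\bar\gamma}^*$, so $\theta_{\gamma_n}^* \in B\!\left(\theta_{\bar\gamma}^*,\delta(\bar\gamma)\right)$ for large $n$, and appending $\bar\gamma$ to that finite schedule reaches $\theta_{\bar\gamma}^*$. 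An \emph{openness} step shows that if $\bar\gamma < 1$ the chain extends past $\bar\gamma$, contradicting maximality; hence $\bar\gamma = 1$ and a finite schedule reaches $\theta^* = \theta_1^*$, the global optimum of the undiscounted LQR. (If one instead prefers an infinite schedule with $\gamma_n \uparrow 1$ and $\gamma_n < 1$ throughout, continuity of $\gamma \mapsto \theta_\gamma^*$ at $\gamma = 1$ gives $\theta_{\gamma_n}^* \to \theta^*$, which is the sense of convergence meant in the statement.)

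The step I expect to be the main obstacle is the openness step, specifically the requirement that the basin radius $\delta(\gamma)$ from Theorem~\ref{thm:local nonlinear convergence} stay bounded below near each point of $[0,1]$ (for instance, be lower semicontinuous in $\gamma$): pushing from $\bar\gamma$ to $\bar\gamma+\epsilon$ needs $\|\theta_{\bar\gamma}^* - \theta_{\bar\gamma+\epsilon}^*\| < \delta(\bar\gamma+\epsilon)$, and although the left side tends to $0$ with $\epsilon$ by Lemma~\ref{lem:homotopy}, this fails if $\delta(\bar\gamma+\epsilon)$ collapses too fast. To secure it I would revisit the proof of Theorem~\ref{thm:local nonlinear convergence} and verify that the quantities governing $\delta$ --- the smallest eigenvalue of the Hessian $\nabla^2 C_\gamma(\theta_\gamma^*)$, the local Lipschitz constant of $\nabla C_\gamma$ near $\theta_\gamma^*$, and the norms of $P_\gamma$ and of the stationary state covariance --- all depend continuously on $\gamma \in [0,1]$; this follows from continuity of $P_\gamma$ in $\gamma$ (Lemma~\ref{lem:continuity of P}) together with Assumption~\ref{asm:linear indep}, which keeps the Hessian nondegenerate. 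Continuity then yields $\delta(\gamma) \ge \tfrac12\,\delta(\bar\gamma) > 0$ for $\gamma$ near $\bar\gamma$, the openness step goes through, and the corollary follows.
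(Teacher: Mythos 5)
Your proposal follows essentially the same route as the paper: Theorem~\ref{thm:convergence of first iteration} handles the base case at $\gamma=0$, and the inductive step glues Theorem~\ref{thm:local nonlinear convergence} to Lemma~\ref{lem:homotopy} (and Corollary~\ref{cor:continuity of K}) to hop from $\theta_{\gamma_n}^*$ into the basin of $\theta_{\gamma_{n+1}}^*$; this is exactly the argument given for Theorem~\ref{thm:homotopy for nonlinear case}, from which the corollary is read off. Where you go beyond the paper is in making the final step precise: the paper simply asserts that ``one can iterate this process until reaching the undiscounted optimal policy,'' whereas you correctly observe that the iteration could in principle stall at some $\bar\gamma<1$ if the basin radius $\delta(\gamma)$ supplied by Theorem~\ref{thm:local nonlinear convergence} degenerates, and you supply the standard open/closed continuation dichotomy together with a plan to show $\delta(\gamma)$ is locally bounded below via continuity of $P_\gamma$ (Lemma~\ref{lem:continuity of P}) and nondegeneracy of $H_\gamma$. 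That uniformity step is left as a sketch rather than fully executed, but it identifies and addresses a genuine gap that the paper's own proof passes over silently, so your version is, if anything, the more complete of the two.
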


\begin{proof}[Proof of \tref{thm:homotopy for nonlinear case}]
\tref{thm:convergence of first iteration} shows that any initialization of the policy will converge to the optimal policy of the $\gamma=0$ system. \tref{thm:local nonlinear convergence} {together with positive-definiteness of $H_\gamma$} then tells us that policy gradient methods experience local exponential convergence near optimality. Corollary \ref{cor:continuity of K} and Lemma \ref{lem:homotopy} tell us that for $\abs{\gamma'-\gamma}$ sufficiently small, $\theta_{\gamma}^*$ will be in close enough to $\theta_{\gamma'}^*$ for \tref{thm:local nonlinear convergence} to apply. One can iterate this process until reaching the undiscounted optimal policy.
\end{proof}

\section{Discussion and Conclusion} \label{sec:Conclusion}

This paper provides a convergence guarantee for model-based policy gradient methods in the setting of the LQR with the policy class $\set{\sum_{k=1}^{d} \theta_k f_k(x) \given \theta_k \in \Rr}$ contingent on a few assumptions on the expressiveness and parametrization of the policy class. Unlike previous works, we adopt a new approach for proving convergence, namely, the homotopy-based approach which, to the best of our knowledge, has not been used before in theoretical reinforcement learning. Using a discount factor to guarantee finite costs and well-defined updates can also be applied to solving issues involving a chaotic policy, such as the one arising naturally from piece-wise linear policies such as the tent map.

Furthermore, we provide an example illustrating a situation where expanding the policy class in the LQR case leads to local minima of vanilla policy gradient algorithms. This issue is resolved by applying the homotopic variant of policy gradients developed in this paper. %In our work, we focused on divergent costs caused by divergent states; however, it is possible for the cost to diverge while the states remain in some compact set -- .

%\paragraph{Limitations}
\subsubsection*{Limitations.} It is important to note that our work considers an idealized, model-based and infinitesimal-stepsize approach. To generalize our results to the model-free perspective, one could proceed in a similar fashion to \cite{fazel2018} by showing that when the roll-out is sufficiently long, the cost function and covariance of the state trajectory can be accurately approximated  and that with enough samples, the true gradient can be approximated within a desired accuracy. Some other interesting avenues include relaxing the assumption of continuous updating, \emph{i.e.}, to consider the stochastic approximation problem resulting from the real-life setting of finite samples and finite step-size and establishing quantitative bounds for the homotopy method.

Furthermore, the approach proposed in this work results in a significant increase in the computational cost of training when compared to the vanilla policy gradient algorithm. This results in a trade-off between computational cost and convergence guarantees for solving the control problem at hand. While this trade-off may not be of immediate interest in practice, what we propose is a new paradigm for studying the convergence of reinforcement learning algorithms.

%\paragraph{Future work}
\subsubsection*{Future Work.} The problem of extending our proofs to establish convergence for the policy class of neural networks remains open. We expect that it should be possible to extend our result to linearized neural networks (such as the Neural Tangent Kernel regime) but we leave this result for future research.
% but it is not immediately clear how one can satisfy Assumptions \ref{asm:linear functions in span} and \ref{asm:homotopy} in this setting.

%\paragraph{Societal Impact}
\subsubsection*{Societal Impact.}

In the much bigger picture, our work is related to the issue of bias in machine learning. The common practice of seeding reinforcement learning agents with human training data can be problematic when the human-generated data contains implicit biases that we do not want in the agent --- almost all data associated with humans will contain such biases. Our work specifically shows that we can do away with some assumptions on the initial knowledge of reinforcement learning agents (\eg assuming the initial policy of the LQR is stabilizing), and thus, in applicable settings, do away with any dependence on human-generated data. We choose to include this reminder because it is imperative to keep this question in mind when designing machine learning models with the goal of bettering human lives. A simple example is the autonomous vehicle trolley problem. In some cultures, it may be preferable to crash into one individual over the other, and thus a model trained in one country may have disastrous consequences when deployed in another.

\subsubsection*{Acknowledgements} The authors thank Jianfeng Lu for many insightful discussions. CXC acknowledges the PRUV program at Duke University for partial support. AA acknowledges the support of the Swiss National Science Foundation through the grant P2GEP2-17501 and
the NSF grant DMS-1613337.

\bibliographystyle{plain}
\bibliography{references}
\newpage

\appendix

\section{Numerical experiments}\label{apdx:figures}

\subsection{Running Algorithm~\ref{alg:homotopy}}
\begin{center}
\includegraphics[width=\textwidth]{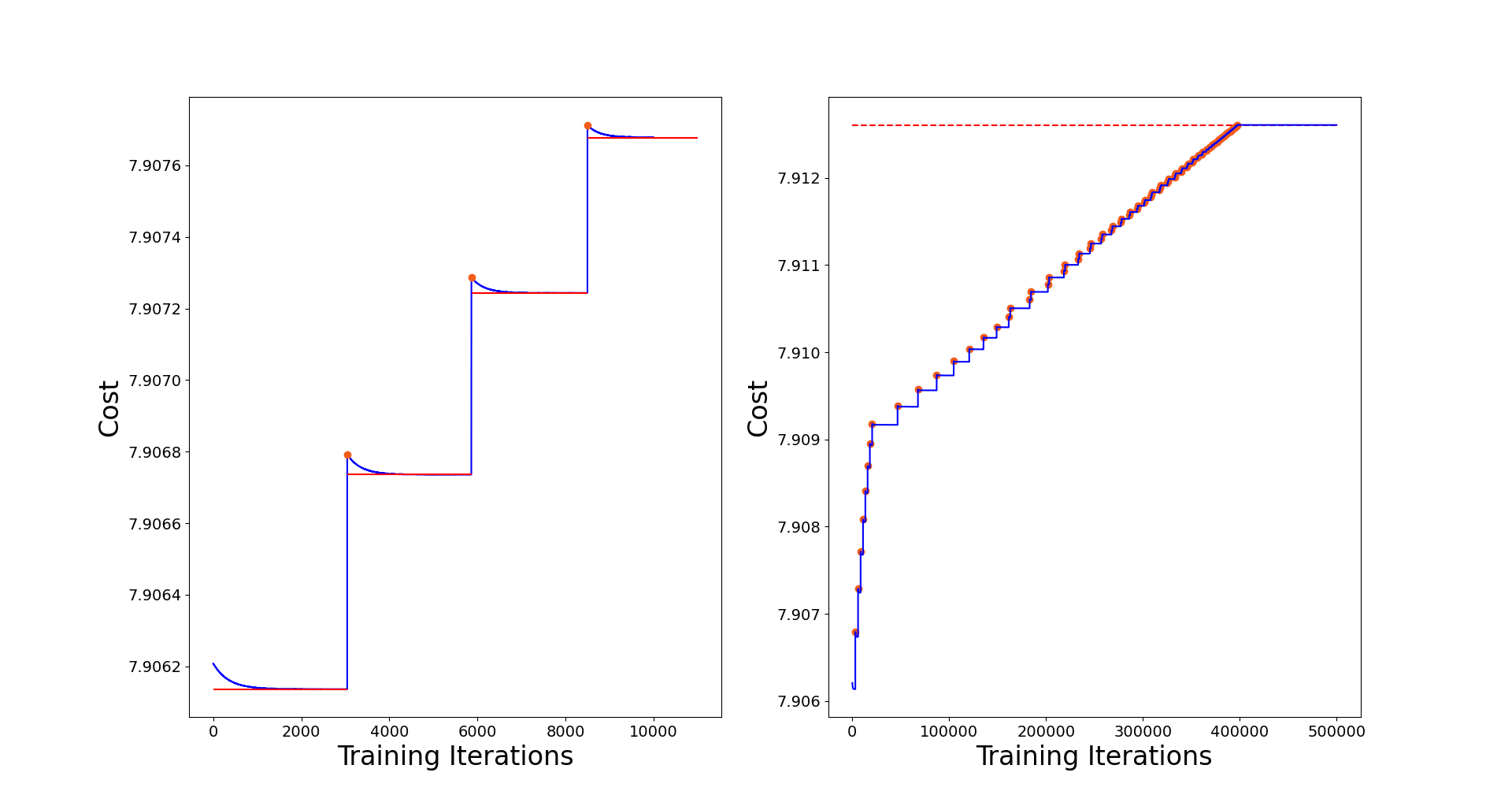}
\captionof{figure}{Evolution of the cost during Algorithm~\ref{alg:homotopy} as a function of training iterations. The blue line is the cost during training of the homotopy algorithm in a similar setting to the one described in Section \ref{subsec:counterexample}. The orange dots represent a point at which $\gamma$ was updated. The red lines are the optimal costs for the various values of $\gamma$. The graph on the left is a close-up of the cost between iterations $0$ and $10^5$, the one on the right represents the full training process. The dashed red line on the plot on the right is the optimal cost for $\gamma = 1$.
%The code to obtain this experiment has been published \cite{code}.
}
\label{fig:homotopy training cost}
\end{center}

We run Algorithm \ref{alg:homotopy} to solve the LQR problem \eqref{eq:cost}, \eqref{eq:lqr dynamics} in  dimension $n=m=1$ with $(A,B) = (0.1,1)$ and $(Q,R) = (1,0.1)$ with the parametric policy class described in Section~\ref{subsec:counterexample} and setting  $\delta = 5\text{e-}4$. We do so by writing a policy using PyTorch and updating the parameters via gradient descent with a stepsize of $1\text{e-}3$.
We increased $\gamma$ by $0.02$ whenever the absolute values of both partial derivatives with respect to the parameters $\theta_0$, $\theta_1$ were less than a tolerance hyperparameter, in our case set to $1\text{e-}4$. Note that the initial states $x_0$ and $y_0$ described in Section \ref{subsec:counterexample} can be explicitly calculated to be $-117/59$ and $-588/295$, respectively.

We trained the model for a total of 500000 episodes. During each episode, we interacted with the LQR system for a fixed number of steps, in our case, 5. Since the value for $\epsilon$ determined by our model is negligibly small (\ie $\epsilon < 1\text{e-}12$), we simultaneously simulate the roll-outs from $x_0$ and $y_0$ during training and update using the total sum of discounted costs experienced in both roll-outs. We update the parameters after every episode. To highlight the homotopy algorithm, we initialize our policy with the parameters $\theta = (0,0)$ which correspond to the $\gamma=0$ optimal policy.
%Code can be found at \url{https://github.com/craigxchen/ReinforcementLearning/tree/writeup_examples}.
\vspace*{\fill}

\subsection{Counterexample}

\captionsetup[subfigure]{labelformat=parens,labelsep=space}
\begin{center}
	\captionsetup{type=figure}\addtocounter{figure}{-1}
	\begin{subfigure}{\textwidth}
		\includegraphics[width=\linewidth]{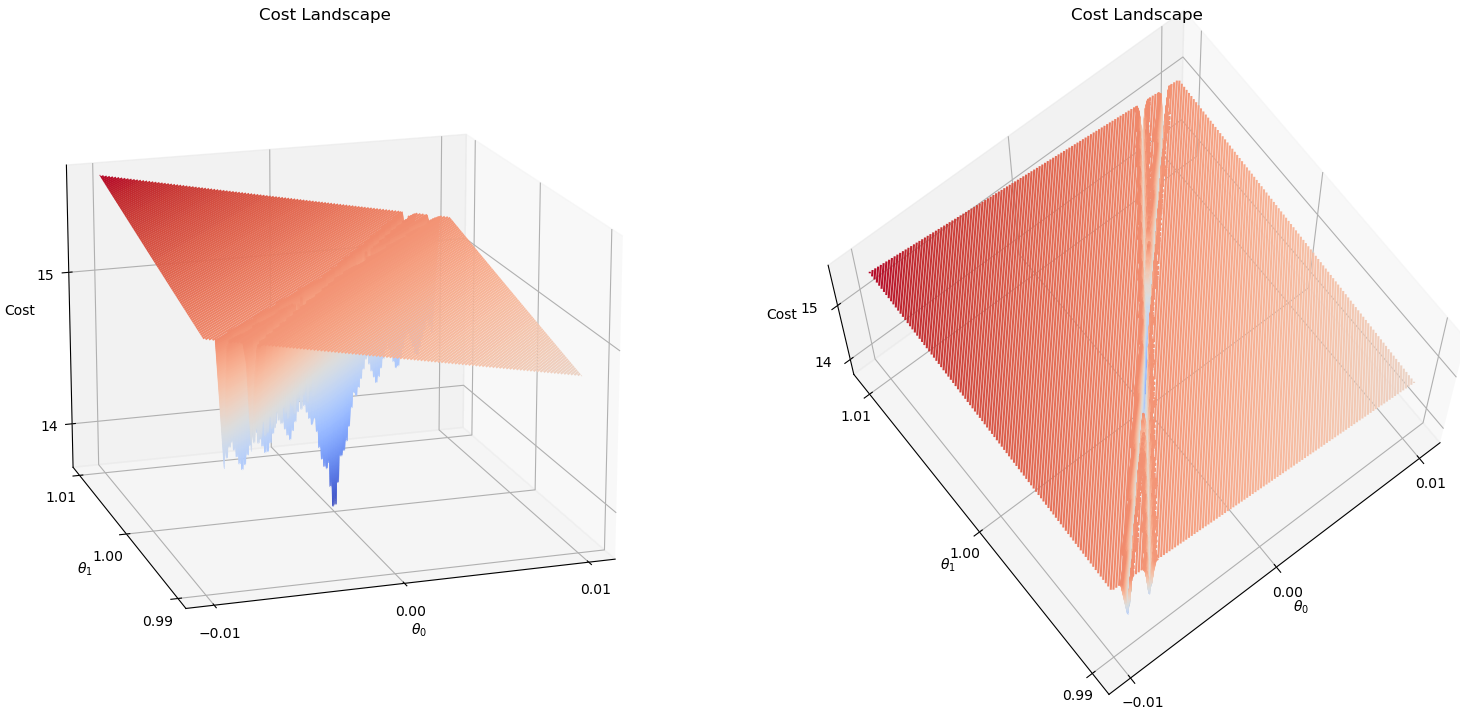}
		\caption{Notice, in the top-right figure, the two ``good'' directions in parameter space.}
	\end{subfigure}
\vskip\baselineskip
	\begin{subfigure}{\textwidth}
		\includegraphics[width=\linewidth]{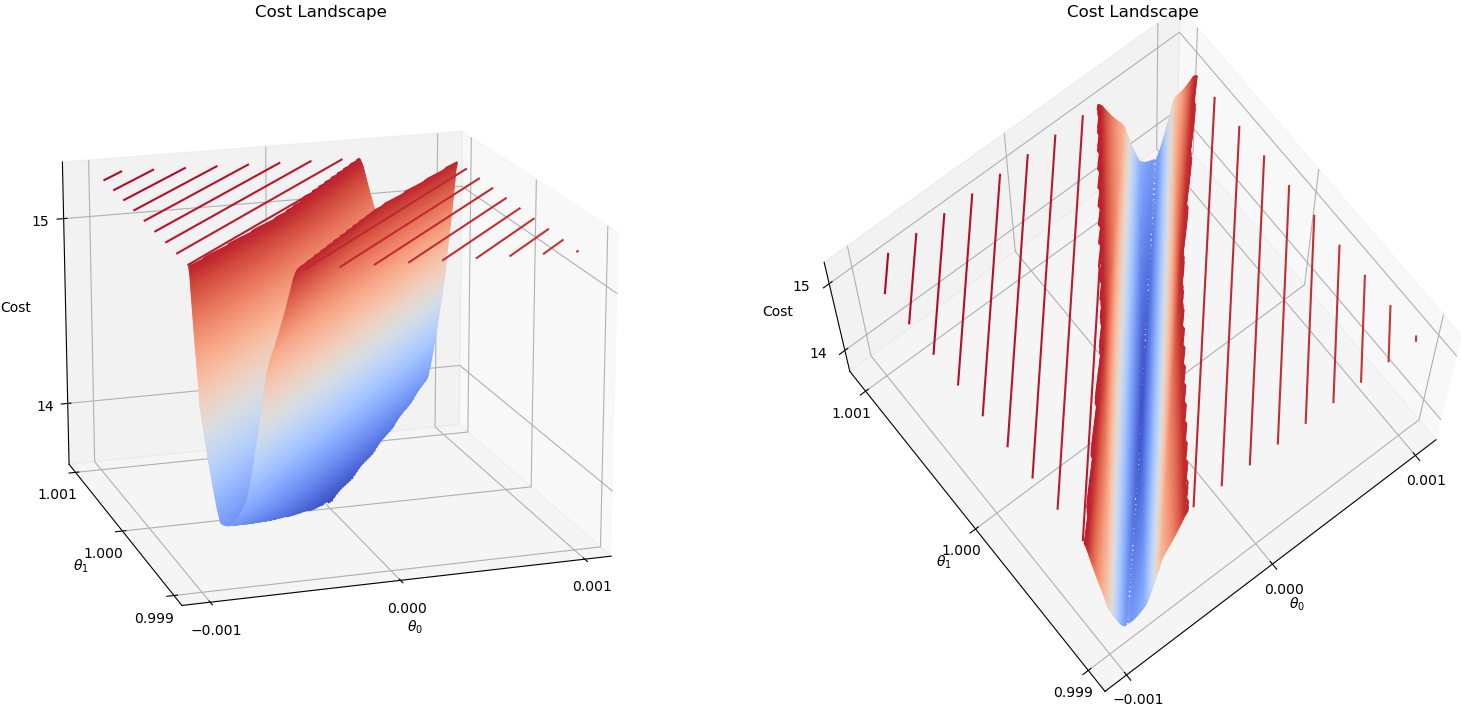}
		\caption{Close-up of the above plots.}
	\end{subfigure}
\captionof{figure}{Landscape of the cost function from the example of Section \ref{subsec:counterexample} in a neighborhood around $\theta = (0,1)$. The local minimum at $(0,1)$ can be clearly seen in these plots.
%The code to obtain this experiment has been published \cite{code}.
}
\label{fig:counterex cost landscape}
\end{center}

We compute the landscape of the cost function to verify numerically the result of Proposition~\ref{prop:counterex local min} and represent the lanscape in Figure~\ref{fig:counterex cost landscape}. These results were generated by evaluating the cost experienced by the policy with parameters $(\theta_0, \theta_1)$ in a neighborhood of $(0,1)$ for the choice of parameters in Section~\ref{subsec:counterexample} and setting  $\epsilon = 0$ and $\delta = 5\text{e-}4$.
The cost of a policy was evaluated by rolling out the policy from both $x_0$ and $y_0$ for 5 steps and computing the discounted sum of the instantaneous costs. Plots were made using the contour plot method from matplotlib.pyplot.

\vspace*{\fill}

\pagebreak

\section{Proofs for the Counterexample}\label{apdx:counterex proofs}

Before stating and proving our results, we give a roadmap for the structure of the proof. In Lemma~\ref{lem:bound cost diff outside cone} we show that the cost associated to trajectories starting at $x_0$ or $y_0$ is lower-bounded locally by the $\ell_1$ norm of the perturbation of the parameters $\|d \theta\|_1$ outside of 2 cone in parameter space (one for each initial condition). Then, in Lemma~\ref{lem:bad change is stronger than good change} we show that a similar lower bound -- with a reduced constant -- holds when the perturbation $d\theta$ lies in one such cone (\emph{e.g.} the one associated to $x_0$) but outside of the other (\emph{e.g.} the one associated to $y_0$), \ie in such regions the positive contribution to the cost of one of the trajectories outweights the one of the other. Combining this with Lemma~\ref{lem:cones dont overlap} showing that for a choice of parameters the cones do not overlap, we obtain in Corollary~\ref{cor:cones} that the \emph{joint} contribution to the cost function associated to trajectories starting at $x_0, y_0$ is lower-bounded locally by (a constant times) the $\ell_1$ norm of the perturbation of the parameters $\|d \theta\|_1$. We conclude by further combining this with Lemma~\ref{lem:bound contribution of uniform measure}, showing that the contribution to the calculation of the uniform measure is negligible, so that the lower bound stated above holds for $\mu_0$.

Throughout, we define
\begin{equ}
  C[x_0](\theta) := \sum_{t=0}^{\infty} \gamma^t
	\pc {x_t(\theta)^\intercal Qx_t(\theta) + \pi_\theta(x_t(\theta))^\intercal R \pi_\theta(x_t(\theta))}
\end{equ}
where $x_t(d \theta) \in \mathbb R^n$ denotes the trajectory of the LQR with control $\pi_{\theta + d\theta}$ for $\theta=(0,1)$ and initial condition $x_0$.

\subsubsection*{Notation} We will use $z_t$ to denote the $t$'th state of the unperturbed trajectory and we will use $z_t(d\theta)$ for the corresponding state of the perturbed trajectory (\ie generated by policy $\pi_{\theta + d\theta}$).

\begin{Lemma}\label{lem:bound cost diff outside cone}
Let $\theta = (0,1)$ and let $z_0 \in (x_0,y_0)$ defined in Section~\ref{subsec:counterexample}. For any $\alpha > 0$, there exists $\delta > 0$, $\KK >0$ such that
% \begin{equ}
% \delta
% \leq
% \abs*{
% 	\frac{
% 		w_2 \alpha
% 	}{
% 		4\max\pg{ \omega_0 a_2+x_2^{(0)}\pc{1+\frac{
% 			\omega_0^2 (\gamma + R)
% 		}{
% 			\pc{ 1 + R \omega_0^2 }(1-\gamma\omega_0^2)
% 		}},
% 		a_2+\omega_0 x_0+x_2^{(0)}\frac{
% 			\omega_0 (\gamma + R)
% 		}{
% 			\pc{ 1 + R \omega_0^2 }(1-\gamma\omega_0^2)
% 		} }
% 	}
% }
% \end{equ}
for any $i \in \{2,3\}$ and $d\theta = (d\theta_0, d\theta_1)$  satisfying
\begin{equ}
\abs{z_0 d\theta_0 + F(z_0) d\theta_1} \geq \alpha \norm{d\theta}_1,
\end{equ}
the cost difference between the contribution to the cost function using the perturbed and unperturbed policies for trajectories starting from $z_0$ is strictly positive and bounded from below:
\begin{equ}
C[z_0](\theta + d\theta) - C[z_0](\theta)
\geq
\frac{3\KK}{4\delta} \norm{d\theta}_1\,.
\end{equ}
% where
% \begin{equ}
% \KK
% =
% \frac{
% 	2\abs{x_2^{(0)}} \gamma^2\pc{ 1 + R \omega_0^2 }\omega_2 \max(a_2, \abs{x_0})
% }{
% 	1 - \gamma \omega_0^2
% }
% \end{equ}
\end{Lemma}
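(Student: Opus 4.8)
The proof rests on the fact that, with $A=0$ and $B=1$, the trajectory is completely explicit: $z_{t+1}(d\theta)=d\theta_0\,z_t(d\theta)+(1+d\theta_1)F(z_t(d\theta))$, i.e. the state at time $t{+}1$ is just the control at time $t$. For the unperturbed parameters $\theta=(0,1)$ and $z_0\in\{x_0,y_0\}$ this reads $z_0\mapsto z_1=F(z_0)=a_i\mapsto z_2=F(a_i)\mapsto\cdots$, and because $a_i$ is the \emph{peak} of the thin tent $\Lambda_{a_i,\delta}$, the composition $F(z_1(d\theta))$ is extremely sensitive to a perturbation of $z_1$: writing $z_1(d\theta)=a_i+\eta$ with $\eta:=z_0\,d\theta_0+a_i\,d\theta_1$, one has $F(a_i+\eta)=F(a_i)+\omega_0\eta-\omega_i|\eta|/\delta$. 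The cone hypothesis $|\eta|\ge\alpha\norm{d\theta}_1$ therefore forces the time-two state to be displaced by an amount of order $\norm{d\theta}_1/\delta$, and the whole argument consists of showing that this amplified displacement dominates every other contribution to the cost once $\delta$ is taken small.

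Concretely, I would first fix $\delta$ small and restrict $d\theta$ to a neighbourhood of the origin of radius of order $\delta$, so that $|\eta|<\delta$ and $z_1(d\theta)=a_i+\eta$ still meets only the support of $\Lambda_{a_i,\delta}$ (the tents at $a_1,a_2,a_3$ being mutually separated for $\delta$ small). Expanding $z_2(d\theta)=d\theta_0 z_1(d\theta)+(1+d\theta_1)F(z_1(d\theta))$, the term $-(1+d\theta_1)\omega_i|\eta|/\delta$ dominates the remaining $O(\norm{d\theta}_1)$ terms (with $\delta$-independent constant), pushing $z_2(d\theta)$ strictly below $z_2=F(a_i)<0$ by at least $\tfrac{\omega_i\alpha}{4\delta}\norm{d\theta}_1$; hence
\[
P:=z_2(d\theta)^2-z_2^2\ \ge\ 2\,|F(a_i)|\,\bigl(z_2-z_2(d\theta)\bigr)\ \ge\ \frac{|F(a_i)|\,\omega_i\,\alpha}{2\delta}\,\norm{d\theta}_1 \;>\;0 .
\]
A short check also shows that $z_2(d\theta)$ lands in a fixed bounded region avoiding all tents (in particular $z_2(d\theta)<0$), where $F(x)=-\tfrac12|x|$, so for $t\ge2$ both trajectories are linear contractions: $z_{t+1}=\tfrac12 z_t$ and $z_{t+1}(d\theta)=\lambda\,z_t(d\theta)$ with $\lambda=\tfrac12+d\theta_0+\tfrac12 d\theta_1\in(0,1)$, giving $z_t^2=(\tfrac14)^{t-2}F(a_i)^2$ and $z_t(d\theta)^2=\lambda^{2(t-2)}\bigl(F(a_i)^2+P\bigr)$.

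It then remains to assemble the discounted sum. Since $u_t=z_{t+1}$, the running control cost is rewritten in terms of the state, giving $C[z_0](\theta+d\theta)-C[z_0](\theta)=(1+R/\gamma)\sum_{t\ge1}\gamma^t\bigl(z_t(d\theta)^2-z_t^2\bigr)$. The $t=1$ term equals $2a_i\eta+\eta^2\ge-2a_i M\norm{d\theta}_1$ with $M:=\max(|z_0|,a_i)$, an $O(\norm{d\theta}_1)$ loss with a $\delta$-independent constant. For $t\ge2$ the two geometric series sum to
\[
\sum_{t\ge2}\gamma^t\bigl(z_t(d\theta)^2-z_t^2\bigr)
= F(a_i)^2\,\frac{\gamma^3(\lambda^2-\tfrac14)}{(1-\gamma\lambda^2)(1-\tfrac\gamma4)}+P\,\frac{\gamma^2}{1-\gamma\lambda^2},
\]
and the crucial point is that $|\lambda^2-\tfrac14|=|d\theta_0+\tfrac12 d\theta_1|\,|\lambda+\tfrac12|\le 2\norm{d\theta}_1$, so the (possibly negative) first term is again $O(\norm{d\theta}_1)$ with a constant independent of $\delta$, while the second is $\ge\gamma^2 P$. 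Combining, $C[z_0](\theta+d\theta)-C[z_0](\theta)\ge\gamma^2 P-C\norm{d\theta}_1\ge\tfrac34\,\gamma^2\,\tfrac{|F(a_i)|\omega_i\alpha}{2\delta}\,\norm{d\theta}_1$ as soon as $\delta$ is small enough that $\tfrac{\gamma^2|F(a_i)|\omega_i\alpha}{2\delta}\ge 4C$; choosing $\delta$ below the thresholds coming from $i=2$ and $i=3$ and setting $\KK:=\tfrac12\gamma^2\omega_i\alpha\min_{i\in\{2,3\}}|F(a_i)|$ yields the claimed bound $\tfrac{3\KK}{4\delta}\norm{d\theta}_1$.

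The delicate point — and the only real obstacle — is the claim that the large one-step displacement at time two does not also produce a comparable ($\delta^{-1}$-sized) \emph{decrease} of the cost at later times; this is conceivable because when $\lambda<\tfrac12$ the perturbed trajectory contracts faster than the unperturbed one and eventually lies strictly closer to the origin. The resolution is exactly the computation above: the difference between the two geometric cost-tails is controlled by $|\lambda^2-\tfrac14|=O(\norm{d\theta}_1)$, i.e. by the perturbation of the contraction \emph{rate}, not by the one-step displacement $z_2-z_2(d\theta)\sim\norm{d\theta}_1/\delta$; the $\delta^{-1}$ blow-up survives only in the genuinely beneficial term $P$, which therefore wins for $\delta$ small.
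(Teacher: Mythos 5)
Your argument is correct and follows essentially the same route as the paper's proof: the tent peak at $a_i$ converts the cone condition $|\eta|\geq\alpha\|d\theta\|_1$ (with $\eta=z_0\,d\theta_0+a_i\,d\theta_1$) into an $O(\|d\theta\|_1/\delta)$ downward displacement of $z_2$, the tail of the cost is an explicit geometric series whose only possibly negative variation comes from the $O(\|d\theta\|_1)$ perturbation of the contraction rate, and $\delta$ is then taken small enough for the $\delta^{-1}$ term to dominate every $\delta$-independent first-order contribution. The only differences are presentational — you sum the two geometric tails explicitly rather than differentiating the closed-form tail cost as the paper does, and your rate-perturbation term $\lambda^2-\tfrac14$ is precisely the quantity the paper lists among those to be dominated by its choice of $\delta$.
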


\begin{proof}[Proof of Lemma \ref{lem:bound cost diff outside cone}]
Throughout, let $z,a,\omega$ be generic variables that can be replaced by either $(x,a_2,\omega_2)$ or $(y,a_3,\omega_3)$. We first consider how the cost of a trajectory changes for any infinitesimal variation of the parameters $d\theta = (d\theta_0, d\theta_1)$. To do so we calculate the first order corrections (in $d \theta$) to the first two timesteps of the trajectory $z_t(d\theta)$ starting at $z_0$. For the first timestep, since along the unperturbed trajectory we have $z_1 = F(z_0) = a$, we have
\begin{equs}
z_1(d\theta)
&=
	d\theta_0 z_0 + (1+ d\theta_1)F(z_0) \\
	&=
	z_1 + \pc{a d\theta_1 + z_0 d\theta_0},\label{e:firstord}
\end{equs}
where here and throughout this section $z_t = z_t(0)$ denotes the unperturbed trajectory. For the second timestep,
\begin{equs}
z_2(d \theta)
&=
	d\theta_0 z_1(d\theta) + (1+d\theta_1)\pc{
		\omega_0 \abs{z_1(d\theta_1)} +
		\omega \Lambda_{a, \delta}(z_1(d\theta_1))
	} \\
	&=
	d\theta_0 z_1 +
	(1+d\theta_1) \bigg(
		\omega_0 \abs{z_1} + \omega_0 (a d \theta_1 + z_0 d \theta_0)
		\\ &\qquad +
		\omega \pc{
			\Lambda_{a,\delta}(z_1) +
			\Lambda_{a,\delta}'(z_1)( a d \theta_1 + z_0 d \theta_0 )
		}
	\bigg) \\
	&=
	a d\theta_0 +
	(1+d\theta_1)\pc{
		z_2 + \omega_0 (a d \theta_1 + z_0 d \theta_0) -
		\frac{\omega}{\delta} \abs{ a d \theta_1 + z_0 d \theta_0 }
	} \\
	&=
	z_2 + \pc{
		z_2 d\theta_1 + a d\theta_0 +
		\omega_0 ( a d \theta_1 + z_0 d \theta_0 ) -
		\frac{\omega}{\delta} \abs{ a d \theta_1 + z_0 d \theta_0 }
	} \\
	&\eqqcolon
	z_2 + dz_2
\end{equs}
where $dz_2 := z_2 d\theta_1 + a d\theta_0 +
\omega_0 ( a d \theta_1 + z_0 d \theta_0 ) -
\frac{\omega}{\delta} \abs{ a d \theta_1 + z_0 d \theta_0 }$ is the first order correction to the second timestep of the trajectory.
We then note that since $z_2\in (-1,0)$ due to our choice of parameters, the future cost from $z_2$ is given by
\begin{equs}
\gamma^2 C[z_2](\theta+d\theta)
&=
	\gamma^2 \sum_{t=0}^{\infty} \gamma^t \pc{ Q(z_{t+2})^2 + R(u_{t+2})^2 }
	=
	\gamma^2 \sum_{t=0}^{\infty} \gamma^t \pc{ Q(z_{t+2})^2 + R(z_{t+3})^2 } \\
&=
	\gamma^2 \pc{
		Q(z_2)^2 + \sum_{t=0}^{\infty}
		\gamma^t \pc{ \gamma Q (z_{t+3})^2 + R(z_{t+3})^2 } } \\
&=
	\gamma^2 \pc{
		Q(z_2)^2 +
		(\gamma Q+R)\sum_{t=0}^{\infty} \gamma^t
		(\pc{d\theta_0 + \omega_0(1 + d\theta_1)}^{t+1} z_2)^2
	} \\
&=
	\gamma^2 (z_2)^2 \pc{
		Q +
		\frac{(\gamma Q+R) (\omega_0 + (d\theta_0 + \omega_0 d\theta_1))^2}
		{1- \gamma \pc{\omega_0 + (d\theta_0 + \omega_0 d\theta_1)}^2 }
	}
\end{equs}
Since $z_2(d\theta) \in (-1,0)$ by construction as well, the above expression also holds for $z_2(d\theta)$. Therefore, since $Q=1$ and $1 + R \omega_0^2 > 0$, the cost of trajectory from $z_2$ onward will vary (to first order in $d \theta$) by
\begin{equs}
\gamma^2 z_2(d\theta)^2
\frac{
	1 + R(\omega_0 + (d\theta_0 + \omega_0 d\theta_1))^2
}{
	1- \gamma \pc{\omega_0 + (d\theta_0 + \omega_0 d\theta_1)}^2
}
&=
	\gamma^2 \frac{
		1 + R(\omega_0 + (d\theta_0 + \omega_0 d\theta_1))^2
	}{
		1- \gamma \pc{\omega_0 + (d\theta_0 + \omega_0 d\theta_1)}^2
	} \pc{
		\pc{z_2}^2 + 2z_2 d z_2
	} \\
&=
	\frac{
		\gamma^2 \pc{ 1 + R \omega_0^2 }
	}{
		1-\gamma \omega_0^2
	}
	\pc{
		\pc{z_2}^2 + 2z_2 d z_2
	}
	\\ &\qquad +
	\frac{
		2 \gamma^2 \omega_0 (\gamma + R )
	}{
		\pc{1 - \gamma \omega_0^2}^2
	} (d\theta_0 + \omega_0 d\theta_1) \pc{z_2}^2 \\
&=
	\frac{\gamma^2}{1- \gamma \omega_0^2 } \Bigg(
		\pc{ 1 + R \omega_0^2 } \pc{z_2}^2
		+ \pc{ 1 + R \omega_0^2 } 2z_2 d z_2
		\\ &\qquad +
		\frac{
			2 \omega_0 (\gamma + R )
		}{
			1-\gamma \omega_0^2
		} (d\theta_0 + \omega_0 d\theta_1) \pc{z_2}^2
	\Bigg)
\end{equs}
Inserting the expression of $d z_2$,
\begin{equs}
\text{RHS}
&=
	\frac{\gamma^2 \pc{ 1 + R \omega_0^2 }}{1- \gamma \omega_0^2 } \bigg(
		\pc{z_2}^2 + 2z_2 \bigg(
			- \frac{\omega}{\delta} \abs{ a d \theta_1 + z_0 d \theta_0 }
			+ z_2 d\theta_1 + a d\theta_0
			\\ &\qquad +
			\omega_0 ( a d \theta_1 + z_0 d \theta_0 ) +
			\frac{
				\omega_0 (\gamma + R )
			}{
				\pc{ 1 + R \omega_0^2 }(1-\gamma\omega_0^2)
			} z_2(d\theta_0 +\omega_0 d\theta_1)
		\bigg)
	\bigg)\label{e:RHS} \\
&\geq
	\frac{\gamma^2\pc{ 1 + R \omega_0^2 }}{1- \gamma \omega_0^2 } \pc{
		\pc{z_2}^2 - 2z_2
		\frac{3\omega \alpha}{4 \delta} \norm{ d \theta }_1
	} \\
&=
	\frac{\gamma^2\pc{ 1 + R \omega_0^2 }}{1- \gamma \omega_0^2 } \pc{
		\pc{z_2}^2 + 2\abs{z_2}
		\frac{3\omega \alpha}{4 \delta} \norm{ d \theta }_1
	},
\end{equs}
since $z_2$ is negative. The inequality comes from the fact that whenever $\abs{z_0 d\theta_0 + a d\theta_1} \geq \alpha \|d \theta\|_{1}$  we can choose $\delta>0$ sufficiently small such that
\begin{equs}
\abs*{\frac{\omega \abs{a d\theta_1 + z_0 d\theta_0}}{4\delta}}
&\geq
\abs*{\frac{\omega \alpha \norm{d\theta}_1}{4\delta}} \\
&\geq
4\max \Bigg(\Bigg|
	z_2 d\theta_1 + a d\theta_0\Bigg|,\Bigg|
	\omega_0 ( a d \theta_1 + z_0 d \theta_0 )\Bigg|\,,\label{e:delta}
	\\ &\qquad\qquad\qquad
	\Bigg|\frac{
		\omega_0 (\gamma + R )
	}{
		\pc{ 1 + R \omega_0^2 }(1-\gamma\omega_0^2)
	} z_2 (d\theta_0 + \omega_0 d\theta_1)
\Bigg|,\\
&\qquad\qquad\qquad
\Bigg|\frac{
   (\gamma + R )a |a d\theta_1+z_0 d \theta_0|
}{
z_2 \gamma^2 \pc{ 1 + R \omega_0^2 }/(1-\gamma\omega_0^2)
}
\Bigg|\Bigg)
\end{equs}
Bounding the right-hand-side by triangle inequality, the above is satisfied when
\begin{equ}
\delta
\leq
\frac{
	\abs{\omega \alpha}
}{
	4\max\pg{M_0,M_1}
},
\end{equ}
where
\begin{equs}
M_0
&=
4 \max\pg{
	\abs{z_2},\,
	\abs{\omega_0 a},\,
	\abs*{\frac{
			\abs{z_2} \omega_0^2 (\gamma + R )
		}{
			\pc{ 1 + R \omega_0^2 }(1-\gamma\omega_0^2)
		}
	},\,
	\abs*{\frac{
			a^2 (\gamma + R)(1-\gamma\omega_0^2)
		}{
			\abs{z_2} \gamma^2 (1+R\omega_0^2)
		}
	}
} \\
M_1
&=
4 \max\pg{
	\abs{a},\,
	\abs{\omega_0 z_0},\,
	\abs*{\frac{
			z_2 \omega_0 (\gamma + R )
		}{
			\pc{ 1 + R \omega_0^2 }(1-\gamma\omega_0^2)
		}
	},\,
	\abs*{\frac{
			az_0 (\gamma+R)(1-\gamma\omega_0^2)
		}{
			z_2\gamma^2 (1+R\omega_0^2)
		}
	}
}.
\end{equs}
In other words, we have shown that for all $d\theta \in \set*{(d\theta_0,d\theta_1) \in \Rr^2 \given \abs{z_0 d\theta_0 + F(z_0) d\theta_1} \geq \alpha \norm{d\theta}_1}$,
\begin{equs}
C[z_0](\theta + d\theta) - C[z_0](\theta)
&=
	\pc{z_0^2 + (\gamma + R ) z_1(d\theta)^2 + \gamma R z_2(d\theta)^2 + \gamma^2 C[z_2(d\theta)](\theta + d\theta)}
	\\ &\qquad -
	\pc{z_0^2 + (\gamma + R ) z_1^2 + \gamma R z_2^2 + \gamma^2 C[z_2](\theta)} \\
&\geq
	\gamma^2 \pc{C[z_2(d\theta)](\theta + d\theta) - C[z_2](\theta)} - 2(\gamma + R )\abs{z_1} \pd{z_1(d\theta) - z_1} \\
  &=\gamma^2 \pc{C[z_2(d\theta)](\theta + d\theta) - C[z_2](\theta)} - 2(\gamma + R )a \pc{a d\theta_1 + z_0 d\theta_0}\\
&\geq
	3\frac{
		2\abs{z_2} \gamma^2\pc{ 1 + R \omega_0^2 }\omega \alpha
	}{
		\pc{1 - \gamma \omega_0^2 } 4\delta
	}
	\norm{ d \theta }_1  \label{eq:cont example lower bound cost diff} \\
&\eqqcolon
	\frac{3\KK}{4\delta} \norm{ d\theta }_1.
\end{equs}
for  \begin{equ}\label{e:k}
\KK
:=
\frac{
	2\abs{z_2} \gamma^2\pc{ 1 + R \omega_0^2 }\omega \alpha
}{
	1 - \gamma \omega_0^2
}
\end{equ}
where the first inequality comes from our assumption that $d\theta$ lies outside of the cone in parameter space ($\abs{z_0 d\theta_0 + F(z_0) d\theta_1} \geq \alpha \norm{d\theta}_1$) and the last one results from our choice of $\delta$ small enough from \eqref{e:delta}.
\end{proof}

Based on the above, the only variation of the parameters that does not increase the cost of the policy is when $d\theta$ is inside the cone
\begin{equ}
\abs{z_0 d\theta_0 + F(z_0) d\theta_1} \leq \alpha \norm{d\theta}_1,
\end{equ}
for $z_0 \in \set{x_0,y_0}$.
For the statement of the next lemma, for a choice of $z_0 \in \set{x_0,y_0}$ we define $\bar z_0 \in \set{x_0,y_0}\setminus z_0$ as the \emph{other} initial condition and by $\bar z_t(d\theta)$ the corresponding perturbed trajectory.
%  and by $(\bar a, \bar \omega, \bar \delta)$ the corresponding parameters, \emph{i.e.}, the parameters in
% $\{ (a_2,  \omega_2,  \delta_2),(a_3,  \omega_3,  \delta_3)\}$ that do not coincide with $( a, \omega,  \delta)$.
% Notice that Lemma \ref{lem:bound cost diff outside cone} also holds if we replace $x_0$ and its corresponding parameters $a_2, \omega_2, \delta_2$ with $y_0$ and $a_3, \omega_3, \delta_3$. This yields a similar cone determined by
% \begin{equ}
% \abs{y_0 d\theta_0 + F(y_0) d\theta_1} \leq \alpha \norm{d\theta}_1
% \end{equ}

\begin{Lemma}\label{lem:bad change is stronger than good change}
Suppose $d\theta \in \set{\abs{z_0 d\theta_0 + F(z_0) d\theta_1} \leq \alpha \norm{d\theta}_1} \cap \set{\abs{\bar z_0 d\theta_0 + F(\bar z_0) d\theta_1} \geq \alpha \norm{d\theta}_1}$. For the same $\delta, \KK>0$ as determined by Lemma \ref{lem:bound cost diff outside cone}, the cost difference is strictly positive and can be bounded by
\begin{equ}
\int_{\Rr} (C[x](\theta + d\theta) - C[x](\theta))  \mu_{0,\tiny\text{pp}}(\d x)
\geq
\frac{1-\epsilon}{2} \frac{\KK}{12\delta} \norm{d\theta}_1\,.
\end{equ}
 Here $\mu_{0,\tiny\text{pp}} = \frac{1-\epsilon}{2}(\delta_{x_0} +\delta_{y_0})$ denotes the pure-point, or discrete, part of $\mu_0$.
\end{Lemma}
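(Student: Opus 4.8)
The plan is to split the discrete measure $\mu_{0,\text{pp}}=\tfrac{1-\epsilon}{2}(\delta_{x_0}+\delta_{y_0})$ into its two atoms, to apply Lemma~\ref{lem:bound cost diff outside cone} to whichever of $x_0,y_0$ has its cone constraint \emph{violated}, and to control the contribution of the other one --- which lies inside its own cone, and is a direction along which the update could in principle lower the cost --- by an estimate that is \emph{uniform in} $\delta$. Concretely, the integral equals $\tfrac{1-\epsilon}{2}$ times the sum of the two cost differences. Write $z_0$ for whichever of $x_0,y_0$ satisfies $\abs{z_0 d\theta_0+F(z_0)d\theta_1}\le\alpha\norm{d\theta}_1$ and $\bar z_0$ for the other, so that $\abs{\bar z_0 d\theta_0+F(\bar z_0)d\theta_1}\ge\alpha\norm{d\theta}_1$; Lemma~\ref{lem:bound cost diff outside cone} applied with $\bar z_0$ in place of $z_0$ gives $C[\bar z_0](\theta+d\theta)-C[\bar z_0](\theta)\ge\tfrac{3\KK}{4\delta}\norm{d\theta}_1$, where I take $\KK$ to be the smaller of the two constants \eqref{e:k} produced for the two initial conditions.

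The heart of the proof is then the claim that $C[z_0](\theta+d\theta)-C[z_0](\theta)\ge -C''\norm{d\theta}_1$ for a constant $C''$ depending only on the (fixed) parameters of the model and on $\alpha$, but \emph{not} on $\delta$. To prove this I would reuse the first-order expansion of the trajectory through $z_0$ from the proof of Lemma~\ref{lem:bound cost diff outside cone}: with $z_1=F(z_0)=a$ and $z_2=\omega_0 a+\omega\in(-1,0)$, one has $z_1(d\theta)=z_1+(z_0 d\theta_0+a d\theta_1)$ exactly and $z_2(d\theta)=z_2+dz_2$ to leading order, with $dz_2$ the correction computed there; and --- using $z_2(d\theta)\in(-1,0)$ together with the decomposition $C[z_0]=z_0^2+(\gamma+R)z_1^2+\gamma R z_2^2+\gamma^2 C[z_2]$ --- the cost from time $2$ onward equals $z_2(d\theta)^2$ times a function of $d\theta$ that is smooth and bounded away from $0$. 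The key point is that the \emph{only} term anywhere in this expansion carrying a factor $\tfrac{1}{\delta}$ is $-\tfrac{\omega}{\delta}\abs{z_0 d\theta_0+a d\theta_1}$ inside $dz_2$; since $z_2<0$ and $\omega>0$, this term pushes $z_2(d\theta)$ \emph{further negative} (moving $z_1(d\theta)$ off the peak of the tent $\Lambda_{a,\delta}$ enlarges $\abs{z_2(d\theta)}$), so it can only \emph{increase} the cost. Hence, in a lower bound for $C[z_0](\theta+d\theta)-C[z_0](\theta)$ this term may be discarded outright; so may all higher-order-in-$d\theta$ corrections, which either appear through nonnegative squares such as $(z_2(d\theta)-z_2)^2\ge 0$ or, for the fixed $\delta$, are $o(\norm{d\theta}_1)$; and what is left is a $\delta$-independent multiple of $\norm{d\theta}_1$, the bound $\abs{z_0 d\theta_0+a d\theta_1}\le\alpha\norm{d\theta}_1$ being used to control the surviving, $\delta$-free parts of $dz_2$. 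The same estimate holds with the roles of $x_0$ and $y_0$ exchanged.

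Adding the two bounds, the bracketed sum is at least $\bigl(\tfrac{3\KK}{4\delta}-C''\bigr)\norm{d\theta}_1$. Since $\KK$ and $C''$ do not depend on $\delta$, taking $\delta$ small enough --- explicitly $\delta\le\tfrac{2\KK}{3C''}$, a constraint compatible with those already imposed in Lemma~\ref{lem:bound cost diff outside cone} --- gives $\tfrac{3\KK}{4\delta}-C''\ge\tfrac{\KK}{12\delta}$, so that the integral is at least $\tfrac{1-\epsilon}{2}\cdot\tfrac{\KK}{12\delta}\norm{d\theta}_1$, which is strictly positive for $d\theta\neq 0$.

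I expect the $\delta$-uniform lower bound of the second paragraph to be the main obstacle. Because the cost is only piecewise differentiable (owing to the absolute value and the tent map), one has to track carefully which terms of the expansion carry negative powers of $\delta$ and verify that each such term is cost-increasing, so that dropping it is legitimate; and making the ``to leading order in $d\theta$'' reasoning precise --- exhibiting, for the given $\delta$, an honest neighborhood of $d\theta=0$ on which the expansion and the sign claims hold --- is the remaining technicality.
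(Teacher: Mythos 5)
Your proposal is correct and follows essentially the same strategy as the paper: split $\mu_{0,\text{pp}}$ into its two atoms, apply Lemma~\ref{lem:bound cost diff outside cone} to the initial condition whose cone constraint is violated, and lower-bound the contribution of the in-cone trajectory by observing that the only term carrying a factor $1/\delta$ --- the tent-slope term $-\tfrac{\omega}{\delta}\abs{a\,d\theta_1+z_0\,d\theta_0}$ inside $dz_2$ --- multiplies $2z_2<0$ and is therefore cost-increasing, hence discardable. The one place you diverge is in how the surviving, $\delta$-free parts of the in-cone contribution are controlled: the paper bounds them by $\tfrac{\KK}{4\delta}\norm{d\theta}_1$ via the same dominance conditions \eqref{e:delta} already used to fix $\delta$, so that the final constant arises as $\tfrac34-\tfrac23=\tfrac1{12}$ with no new constraint on $\delta$; you instead bound them by a $\delta$-independent $C''\norm{d\theta}_1$ and impose the additional requirement $\delta\le\tfrac{2\KK}{3C''}$, which is harmless (it can be folded into the choice of $\delta$ in Lemma~\ref{lem:bound cost diff outside cone}) and arguably makes the mechanism more transparent, since it isolates the fact that only the out-of-cone trajectory produces an $\Omega(\norm{d\theta}_1/\delta)$ effect. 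Your remark that $\KK$ should be taken as the minimum over the two initial conditions is a point the paper glosses over, and is the right reading.
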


\begin{proof}[Proof of Lemma \ref{lem:bad change is stronger than good change}]
{The proof proceeds similarly to the proof of Lemma \ref{lem:bound cost diff outside cone}. We start by bounding from below the variation of the cost due to the trajectory $z_t$ starting at $z_0$ with $\abs{z_0 d\theta_0 + F(z_0) d\theta_1} \leq \alpha \norm{d\theta}_1$. Recalling the expression for the first-order variation of the cost \eref{e:RHS} from the proof of Lemma \ref{lem:bound cost diff outside cone} and that $z_2<0$ by our choice of parameters, we bound the negative part of the contribution to the cost function for $t\geq2$ as:}
\begin{equs}
(\gamma^2 C[z_2(d\theta)](\theta + d\theta) &- \gamma^2 C[z_2](\theta))_-\\&
=
	\frac{
		\gamma^2\pc{ 1 + R \omega_0^2}
	}{
		1- \gamma \omega_0^2
	} 2z_2 \bigg(
		z_2 d\theta_1 + a d\theta_0 +
		\omega_0 ( a d \theta_1 + z_0 d \theta_0 )
		\\ &\qquad -
		\frac{\omega}{\delta} \abs{ a d \theta_1 + z_0 d \theta_0 }
		+
		\frac{
				\omega_0 (\gamma + R )
		}{
			\pc{ 1 + R \omega_0^2 }(1-\gamma\omega_0^2)
		} z_2(d\theta_0 +\omega_0 d\theta_1)
	\bigg)_+ \\
&=
	\frac{
		\gamma^2\pc{ 1 + R \omega_0^2}
	}{
		1- \gamma \omega_0^2
	} \frac{\omega_2}{\delta} 2\abs{z_2}
	\abs{ a d \theta_1 + z_0 d \theta_0 }
	\\ &\qquad +
	\frac{
		\gamma^2\pc{ 1 + R \omega_0^2}
	}{
		1- \gamma \omega_0^2
	} 2z_2 \bigg(
		z_2 d\theta_1
			+
			a d\theta_0 +	\omega_0 ( a d \theta_1 + z_0 d \theta_0 )
		\\ &\qquad +
		\frac{
				\omega_0 (\gamma + R )
		}{
			\pc{ 1 + R \omega_0^2 }(1-\gamma\omega_0^2)
		} z_2(d\theta_0 +\omega_0 d\theta_1)
	\bigg)_+ \\
&\geq
	\frac{
		\gamma^2\pc{ 1 + R \omega_0^2}
	}{
		1- \gamma \omega_0^2
	} 2z_2 \bigg(
		z_2 d\theta_1
			+
			a d\theta_0 +	\omega_0 ( a d \theta_1 + z_0 d \theta_0 )
		\\ &\qquad +
		\frac{
			\omega_0 (\gamma + R )
		}{
			\pc{ 1 + R \omega_0^2 }(1-\gamma\omega_0^2)
		} z_2(d\theta_0 +\omega_0 d\theta_1)
	\bigg)_+
\end{equs}
where $(b)_- := \min(0,b), (b)_+:=\max(0,b)$ denote the positive and negative part of $b\in \mathbb R$ respectively.
{We can assume that the quantity on the LHS above is negative; if it were positive, then the proof is done since we are aiming to lower bound the total difference, so we could ignore this term if it were positive.} Since we are using the same $\delta$ as chosen in Lemma \ref{lem:bound cost diff outside cone}, recalling the definition of $\KK>0$ and that $z_2 < 0$ we obtain
\begin{equ}
\gamma^2 \pc{C[z_2(d\theta)](\theta + d\theta)| - C[z_2](\theta)}_-
\geq
	-\frac{
		2\gamma^2\abs{z_2}\pc{1+ R \omega_0^2 }
	}{
		1- \gamma \omega_0^2
	} \abs*{\frac{\omega_2 \alpha \norm{d\theta_1}}{4\delta}}
=
	-\frac{\KK}{4\delta} \norm{d\theta}_1\,.
\end{equ}
Then, recalling that $z_1 = F(z_0) = a$ and $0 < R < \gamma$, the total cost difference from the initial point $z_0$ (to first order) can then be bounded as follows for $\|d \theta \|$ sufficiently small:
\begin{equs}
C[z_0](\theta + d\theta) - C[z_0](\theta)&\geq (C[z_0](\theta + d\theta) - C[z_0](\theta))_-\\
&=
	\big(\pc{
		z_0^2 + (\gamma + R ) z_1(d\theta)^2 + \gamma R z_2(d\theta)^2
		+ \gamma^2 C[z_2(d\theta)](\theta + d\theta)
	} \\ &\qquad \qquad-	\pc{
		z_0^2 + (\gamma + R ) z_1^2 + \gamma R z_2^2
		+ \gamma^2 C[z_2](\theta)
	}\big)_- \\
&\geq
	-(\gamma + R ) \pd{z_1(d\theta)^2 - z_1^2}
	+\gamma R \pc{z_2(d\theta)^2 - z_2^2}_-
	\\ &\qquad \qquad+
	\gamma^2 \pc{C[z_2(d\theta)](\theta + d\theta) - C[z_2](\theta)}_- \\
&\geq
	-2(\gamma + R )\abs{z_1} \pd{z_1(d\theta) - z_1} \\&\qquad\qquad +
	2 \gamma^2 \pc{C[z_2(d\theta)](\theta + d\theta) - C[z_2](\theta)}_- \\
&\geq
	-2 (\gamma + R ) \abs{z_1} \alpha \norm{d\theta}_1 -
	\frac{2\KK}{4\delta} \norm{d\theta}_1 \\
&\geq
	-\frac{2\KK}{3\delta} \norm{d\theta}_1.
\end{equs}
where in the second inequality we have used that $z_2(d\theta)^2 - z_2^2>0$.
For any $d\theta \in \set{\abs{z_0 d\theta_0 + F(z_0) d\theta_1} \leq \alpha \norm{d\theta}_1} \cap \set{\abs{\bar z_0 d\theta_0 + F(\bar z_0) d\theta_1} \geq \alpha \norm{d\theta}_1}$, we can then compute that
\begin{equs}
\int_{\Rr}&\pc{ C[x](\theta + d\theta) - C[x](\theta) } \mu_{0,\tiny\text{pp}}(d x)
\\&=
	\frac{1-\epsilon}{2}\pc{
		\int_{\Rr} C[x](\theta + d\theta) - C[x](\theta) \delta(z_0) \d x	+
		\int_{\Rr} C[x](\theta + d\theta) - C[x](\theta) \delta(\bar z_0) \d x
	} \\
&\geq
	\frac{1-\epsilon}{2} \pc{
		\frac{3\KK}{4\delta} \norm{ d\theta }_1
	 {-\frac{2\KK}{3\delta} \norm{d\theta}_1}
	} \\
&\geq
	\frac{1-\epsilon}{2} \frac{\KK}{12\delta} \norm{d\theta}_1
\end{equs}
where $\mu_{0,\text{pp}}$ denotes the discrete part of the measure $\mu_0$. %Notice here that if the integral against $\delta(x_0)$ were positive, then we could lower bound the LHS by only the integral against $\delta(y_0)$.
\end{proof}

\begin{Lemma}\label{lem:cones dont overlap}
For any $x_0 \neq y_0$ such that $x_0,y_0 < 0$ and $F(x_0),F(y_0) > 0$, there exists an $\alpha > 0$ such that
\begin{equ}
\set{\abs{x_0 d\theta_0 + F(x_0) d\theta_1} \leq \alpha \norm{d\theta}_1} \cap \set{\abs{y_0 d\theta_0 + F(y_0) d\theta_1} \leq \alpha \norm{d\theta}_1}
=
\set{(0,0)}.
\end{equ}
\end{Lemma}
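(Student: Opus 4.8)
Write $L_1(d\theta):=x_0\,d\theta_0+F(x_0)\,d\theta_1$ and $L_2(d\theta):=y_0\,d\theta_0+F(y_0)\,d\theta_1$ for the two linear functionals on $\Rr^2$ occurring in the statement. The plan is to reduce the claim to the single fact that $L_1$ and $L_2$ are linearly independent, i.e. that the $2\times 2$ coefficient matrix is invertible, i.e.
\[
  \Delta \;:=\; x_0 F(y_0)-y_0 F(x_0)\;\neq\;0 .
\]
Linear independence is clearly necessary: were $L_1$ and $L_2$ proportional, the whole line $\ker L_1$ --- one-dimensional, hence containing nonzero vectors --- would sit in both cones for every $\alpha>0$. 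The content of the lemma is that it is also sufficient, which I would obtain by a compactness argument.

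Granting $\Delta\neq 0$, set $g(d\theta):=\max\{\,|L_1(d\theta)|,\,|L_2(d\theta)|\,\}$; this is continuous and positively homogeneous of degree $1$ on $\Rr^2$. On the compact set $S:=\{d\theta:\|d\theta\|_1=1\}$ it is strictly positive, because $g(d\theta)=0$ forces $L_1(d\theta)=L_2(d\theta)=0$ and hence $d\theta=0\notin S$ by linear independence. Put $\alpha:=\tfrac{1}{2}\min_S g>0$. For any $d\theta\neq0$, homogeneity gives $g(d\theta)=\|d\theta\|_1\,g\!\left(d\theta/\|d\theta\|_1\right)\geq 2\alpha\|d\theta\|_1>\alpha\|d\theta\|_1$, so $d\theta$ cannot simultaneously satisfy $|L_1(d\theta)|\leq\alpha\|d\theta\|_1$ and $|L_2(d\theta)|\leq\alpha\|d\theta\|_1$. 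As $(0,0)$ lies trivially in both cones, their intersection is exactly $\{(0,0)\}$.

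It remains to verify $\Delta\neq0$, and this step genuinely uses the construction of Section~\ref{subsec:counterexample}, not merely the sign conditions in the statement (two vectors with negative first coordinate and positive second coordinate can still be parallel, in which case the conclusion fails). For the parameters of \eqref{eq:cont example parameters} I would check that the minimal pre-images $x_0=\min F^{-1}(a_2)$ and $y_0=\min F^{-1}(a_3)$ both fall in the increasing affine branch of $F$ on $[a_1-\delta_1,a_1]$, on which $F(x)=cx+b$ with $c=-\omega_0+\omega_1/\delta_1$ and intercept $b=\omega_1(\delta_1-a_1)/\delta_1\neq0$; ruling out pre-images on the positive axis or outside $[a_1-\delta_1,a_1+\delta_1]$ uses $\omega_0\in(-1,0)$ together with the smallness of $\delta$ and of $\omega_2,\omega_3$ relative to $a_2$. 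Then $F(x_0)=cx_0+b$ and $F(y_0)=cy_0+b$, the $cx_0y_0$ terms cancel in $\Delta$, and we are left with $\Delta=b\,(x_0-y_0)\neq0$ since $b\neq0$ and $x_0\neq y_0$. I expect this last localisation of the pre-images to a common affine branch of $F$ --- rather than the linear-algebra core --- to be the main obstacle.
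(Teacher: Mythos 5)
Your proof is correct and reaches the conclusion by a genuinely different route from the paper's. The paper argues geometrically: it forms the two kernel lines of the functionals, constructs the bisector of slope $M$ between them, assumes WLOG $\abs{x_0}/F(x_0) > M > \abs{y_0}/F(y_0)$, and derives explicit admissible values $\alpha < \tfrac12\min\{(\abs{x_0}-MF(x_0))/(1+M),\,(MF(y_0)-\abs{y_0})/(1+M)\}$ so that neither cone meets the bisector away from the origin. Your compactness argument (take $\alpha=\tfrac12\min_S\max\{\abs{L_1},\abs{L_2}\}$ over the $\ell_1$-unit sphere $S$ and use positive homogeneity) is shorter and avoids the case analysis, at the price of not producing an explicit $\alpha$ --- which is harmless, since the downstream results (Corollary~\ref{cor:cones} via Lemma~\ref{lem:bound cost diff outside cone}) only need existence of some $\alpha>0$. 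More importantly, you correctly flag something the paper's proof glosses over: the stated hypotheses ($x_0\neq y_0$, both negative, $F$ positive at both) do \emph{not} by themselves exclude proportionality of the two functionals, yet the paper's proof tacitly assumes the two lines are distinct when it asserts ``by assumption, the lines \dots only intersect at the origin'' and later relies on the strict inequalities above. Your supplementary check that $\Delta=x_0F(y_0)-y_0F(x_0)=b(x_0-y_0)\neq0$, because $x_0$ and $y_0$ lie on a common affine branch of $F$ inside the support of $\Lambda_{a_1,\delta_1}$ with nonzero intercept $b$, is exactly the right way to close this for the specific construction of Section~\ref{subsec:counterexample}, and it does go through for the parameters \eqref{eq:cont example parameters} (the intercept is nonzero on either branch of the tent at $a_1$). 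The one step you leave as a sketch --- localizing the minimal preimages of $a_2,a_3$ to that branch using $\omega_0\in(-1,0)$ and the smallness of $\omega_2,\omega_3,\delta$ --- is routine, so I see no gap.
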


The above directly implies that
\begin{Cor}\label{cor:cones}
  For the same $\delta, \KK>0$ as determined by Lemma \ref{lem:bound cost diff outside cone}, and for every $d \theta \in \mathbb R^d$ with $\|d \theta \| < \delta$  we have that
  \begin{equ}
  \int_{\Rr} \pc{C[x](\theta + d\theta) - C[x](\theta) }\mu_{0,\tiny\text{pp}}(\d x)
  \geq
  \frac{1-\epsilon}{2} \frac{\KK}{12\delta} \norm{d\theta}_1\,.
  \end{equ}
   Here $\mu_{0,\tiny\text{pp}}$ denotes the pure-point, or discrete, part of $\mu_0$.
\end{Cor}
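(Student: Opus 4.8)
The plan is to obtain Corollary~\ref{cor:cones} as a bookkeeping consequence of Lemmas~\ref{lem:bound cost diff outside cone}, \ref{lem:bad change is stronger than good change} and \ref{lem:cones dont overlap}, via a short case analysis on the position of the perturbation $d\theta$ relative to the two cones
\begin{equ}
\mathcal C_{z_0}
:=
\set*{d\theta \in \Rr^2 \given \abs{z_0\, d\theta_0 + F(z_0)\, d\theta_1} \leq \alpha \norm{d\theta}_1}\,,
\qquad z_0 \in \set{x_0, y_0}\,.
\end{equ}
First I would fix the aperture $\alpha>0$ using Lemma~\ref{lem:cones dont overlap}: the initial points $x_0,y_0$ built in Section~\ref{subsec:counterexample} satisfy $x_0 \neq y_0$, $x_0,y_0<0$ and $F(x_0),F(y_0)>0$, so the lemma provides an $\alpha$ with $\mathcal C_{x_0}\cap\mathcal C_{y_0}=\set{(0,0)}$. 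With this $\alpha$ fixed, I would apply Lemma~\ref{lem:bound cost diff outside cone} once with $z_0=x_0$ and once with $z_0=y_0$; choosing the tent width $\delta>0$ small enough that the smallness condition in that lemma holds for \emph{both} initial conditions, and setting $\KK:=\min\set{\KK_{x_0},\KK_{y_0}}$ with $\KK_{z_0}$ as in \eqref{e:k}, one gets a single pair $(\delta,\KK)$ for which the conclusions of Lemmas~\ref{lem:bound cost diff outside cone} and \ref{lem:bad change is stronger than good change} hold for each of $x_0$ and $y_0$.

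Fix now $d\theta$ with $0<\norm{d\theta}<\delta$ (the case $d\theta=0$ being trivial, both sides vanishing) and split into two cases. If $d\theta\notin\mathcal C_{x_0}\cup\mathcal C_{y_0}$, the hypothesis of Lemma~\ref{lem:bound cost diff outside cone} holds for both trajectories, so $C[x_0](\theta+d\theta)-C[x_0](\theta)\geq \tfrac{3\KK}{4\delta}\norm{d\theta}_1$ and likewise for $y_0$; integrating against $\mu_{0,\text{pp}}=\tfrac{1-\epsilon}{2}(\delta_{x_0}+\delta_{y_0})$ gives
\begin{equ}
\int_{\Rr}\pc{C[x](\theta+d\theta)-C[x](\theta)}\,\mu_{0,\text{pp}}(\d x)
\geq
(1-\epsilon)\frac{3\KK}{4\delta}\norm{d\theta}_1
\geq
\frac{1-\epsilon}{2}\frac{\KK}{12\delta}\norm{d\theta}_1\,.
\end{equ}
Otherwise $d\theta$ lies in one of the two cones, and by the choice of $\alpha$ it cannot lie in both; so, up to relabelling, $d\theta\in\mathcal C_{x_0}\setminus\mathcal C_{y_0}$, and Lemma~\ref{lem:bad change is stronger than good change} applied with $z_0=x_0$, $\bar z_0=y_0$ yields exactly $\int_{\Rr}\pc{C[x](\theta+d\theta)-C[x](\theta)}\,\mu_{0,\text{pp}}(\d x)\geq \tfrac{1-\epsilon}{2}\tfrac{\KK}{12\delta}\norm{d\theta}_1$. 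In either case the claim follows.

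I do not expect any genuine obstacle: all the analytic content is in the three cited lemmas and this step is purely organizational. The only care needed is in the bookkeeping — checking that one pair $(\delta,\KK)$ can be made to serve both initial conditions simultaneously (handled by the common small $\delta$ and the minimum $\KK$ above), noting that $\norm{d\theta}<\delta$ keeps valid the first-order expansions underpinning Lemmas~\ref{lem:bound cost diff outside cone} and \ref{lem:bad change is stronger than good change}, and observing that the ``outside both cones'' bound $(1-\epsilon)\tfrac{3\KK}{4\delta}\norm{d\theta}_1$ dominates the ``one cone'' bound, so that the single uniform constant $\tfrac{1-\epsilon}{2}\tfrac{\KK}{12\delta}$ can be stated in all cases.
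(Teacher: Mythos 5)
Your proposal is correct and follows essentially the same route as the paper: fix $\alpha$ via Lemma~\ref{lem:cones dont overlap} so the two cones meet only at the origin, then split into the ``outside both cones'' case (Lemma~\ref{lem:bound cost diff outside cone} applied to each of $x_0,y_0$) and the ``inside exactly one cone'' case (Lemma~\ref{lem:bad change is stronger than good change}). In fact your write-up is more explicit than the paper's two-sentence proof, which leaves the case analysis and the uniform choice of $(\delta,\KK)$ implicit.
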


\begin{proof}[Proof of Corollary~\ref{cor:cones}]
  For our choice of parameters \eqref{eq:cont example parameters}, Lemma \ref{lem:cones dont overlap} gives us an $\alpha>0$ such that the cones determined by $x_0$ and $y_0$ in parameter space do \emph{not} overlap except at $d\theta = (0,0)$. By Lemma~\ref{lem:bound cost diff outside cone}, for any such $\alpha$ there exists a $\delta>0$ such that the claim holds.
\end{proof}

\begin{proof}[Proof of Lemma \ref{lem:cones dont overlap}]
By assumption, the lines
$$
\frac{d\theta_1}{d\theta_0} = \frac{-x_0}{F(x_0)}
\quad \text{and} \quad
\frac{d\theta_1}{d\theta_0} = \frac{-y_0}{F(y_0)}
$$
only intersect at the origin. Then, the bisector of the central angle between these two lines can be written as
\begin{equ}
\frac{d\theta_1}{d\theta_0}
=
\frac{
	-\frac{\sqrt{x_0^2 + F(x_0)^2}}{\sqrt{y_0^2 + F(y_0)^2}}y_0 - x_0
}{
	\frac{\sqrt{x_0^2 + F(x_0)^2}}{\sqrt{y_0^2 + F(y_0)^2}}F(y_0) + F(x_0)
}
\eqqcolon
M > 0\,.
\end{equ}
We can assume without loss of generality that the line corresponding to $x_0$ lies \emph{above} the bisector (\ie $-x_0/F(x_0) > M$). Then, it suffices to choose $\alpha$ such that neither of the sets
\begin{equ}\label{e:sets}
  \set{\abs{x_0 d\theta_0 + F(x_0) d\theta_1} \leq \alpha \norm{d\theta}_1} \qquad, \qquad \set{\abs{y_0 d\theta_0 + F(y_0) d\theta_1} \leq \alpha \norm{d\theta}_1}
  \end{equ}
  intersects the line $\ell_M = \set{(d\theta_0, d\theta_1)\in \mathbb R^2~:~d\theta_0= M d\theta_1}$ outside of $(0,0)$. %lower bound of the $x_0$ cone is above the bisector and the upper bound of the $y_0$ cone is below it.

Notice that the boundaries of the sets in \eqref{e:sets} that are closest to $\ell_M$ can be explicitly written as the lines
\begin{gather*}
\frac{d\theta_1}{d\theta_0} = \frac{-\alpha - x_0}{\alpha + F(x_0)}
\quad \text{and} \quad
\frac{d\theta_1}{d\theta_0} = \frac{\alpha - y_0}{-\alpha + F(y_0)}\,.
\end{gather*}
Then, noting that we can replace $-x_0$ with $\abs{x_0}$ and $-y_0$ with $\abs{y_0}$ since $x_0,y_0 < 0$, we have that the only intersection of the sets in \eqref{e:sets} with $\ell_M$ is $(0,0)$ if
\begin{equ}
M < \frac{-\alpha - x_0}{\alpha + F(x_0)}
\quad \text{and} \quad
M > \frac{\alpha - y_0}{-\alpha + F(y_0)}\,,
\end{equ}
which is equivalent to the conditions
\begin{equ}
\alpha < \frac{\abs{x_0} - MF(x_0)}{1+M}
\quad \text{and} \quad
\alpha < \frac{MF(y_0) - \abs{y_0}}{1+M}\,.
\end{equ}
Since we have assumed that
\begin{equ}
\frac{\abs{x_0}}{F(x_0)} > M > \frac{\abs{y_0}}{F(y_0)}
\end{equ}
and that $F(x_0),F(y_0) > 0$, we know that the bounds in both constraints are strictly positive. Thus, we can choose any
\begin{equ}
\alpha
<
\frac{1}{2}\min\pc{\frac{\abs{x_0} - MF(x_0)}{1+M}, \frac{MF(y_0) - \abs{y_0}}{1+M}}
\end{equ}
and this proves the claim.
\end{proof}

\begin{Lemma}\label{lem:bound contribution of uniform measure}
There exists $\KK'>0$ such that for any initial point $u_0 \sim U(-5,5)$, the average cost difference can be bounded from above by
\begin{equ}
\abs*{\int_{-5}^{5} (C[u](\theta + d\theta) - C[u](\theta)) \frac{\d u}{10} }
\leq
\KK' \norm{d\theta}_1\,.
\end{equ}
% where
% \begin{equ}
% \KK'
% :=
% 25\abs*{
% 	\pc{ L^6 + \frac{\gamma^4}{1-\gamma \omega_0^2} }
% },
% \end{equ}\aanote{why absolute value here?}
% and $L$ is the Lipschitz constant of $F$ on $(-5,5)$.
\end{Lemma}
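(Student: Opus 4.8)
The plan is to exploit the special structure of this example: since $(A,B)=(0,1)$, the trajectory from an initial state $u$ is just the orbit of $u$ under iteration of the policy map, which at $\theta=(0,1)$ is $F$ and at $\theta+d\theta=(d\theta_0,1+d\theta_1)$ is $\pi_{\theta+d\theta}(x)=d\theta_0 x+(1+d\theta_1)F(x)$ --- a perturbation of $F$ that is uniformly small in sup-norm on $(-5,5)$, since $|\pi_{\theta+d\theta}(x)-F(x)|\le(|x|+|F(x)|)\|d\theta\|_1$. Following the conventions of this appendix, write $z_t$ (resp.\ $z_t(d\theta)$) for the unperturbed (resp.\ perturbed) trajectory with $z_0=u$. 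It suffices to bound the integrand $|C[u](\theta+d\theta)-C[u](\theta)|$ by $\KK'\|d\theta\|_1$ uniformly in $u\in(-5,5)$ for $\|d\theta\|_1$ small; for $\|d\theta\|_1$ bounded below the bound follows from the a priori bound established next, after enlarging $\KK'$.

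First I would establish a uniform a priori bound. Since $|F(x)|\le|\omega_0|\,|x|+C_0$ with $|\omega_0|<1$, for $|x|$ large the maps $F$ and $\pi_{\theta+d\theta}$ (for $\|d\theta\|_1$ small) are contracting toward $0$; hence there is a constant $M$ with $|z_t|\le M$ and $|z_t(d\theta)|\le M$ for all $t$ and all $u\in(-5,5)$, so that $C[u](\theta)$ and $C[u](\theta+d\theta)$ are finite and bounded uniformly in $u$ and $d\theta$.

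The key geometric input, proved by direct inspection of the piecewise-linear map $F$, is that there is a \emph{uniform} time $T_0$ such that the unperturbed orbit of \emph{every} $u\in(-5,5)$ lies in the interval $I=(-1.9,0)$ for all $t\ge T_0$, that $F|_{I}=\tfrac12\,\mathrm{id}$, and that a slightly enlarged interval $I'=(-1.9,\eta_*)$ is forward-invariant under $\pi_{\theta+d\theta}$ for $\|d\theta\|_1$ small, with $\pi_{\theta+d\theta}|_{I'}$ linear and of modulus $<1$ on each side of $0$. I would then split the cost into the head $t\le T_0$ and the tail $t>T_0$. For the head: although $\mathrm{Lip}(F)$, hence $\mathrm{Lip}(\pi_{\theta+d\theta})$, is of order $1/\delta$, iterating a \emph{bounded} number of times still gives $|z_t(d\theta)-z_t|\le C_{T_0}\|d\theta\|_1$ for $t\le T_0$, with $C_{T_0}$ a finite constant depending on the (fixed) parameters. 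Hence for $\|d\theta\|_1$ small enough --- uniformly in $u$ --- one has $z_{T_0}(d\theta)\in I'$, so \emph{both} orbits remain in the contracting region $I'$ for all $t\ge T_0$; there a Gr\"onwall-type recursion, using that the contraction modulus differs from $\tfrac12$ by at most $C\|d\theta\|_1$, yields $|z_t(d\theta)-z_t|\le C\|d\theta\|_1\,(t-T_0)\,a^{\,t-T_0}$ together with $|z_t|,|z_t(d\theta)|\le C\,a^{\,t-T_0}$ for some $a\in(0,1)$.

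Finally I would estimate the cost difference termwise. Using $u_t=z_{t+1}$ (as $A=0,B=1$) and $Q=1$, the cost is $\sum_{t\ge0}\gamma^t(z_t^2+R\,z_{t+1}^2)$, so $|C[u](\theta+d\theta)-C[u](\theta)|$ is controlled, up to a bounded factor $(1+R/\gamma)$, by $\sum_{t\ge0}\gamma^t|z_t(d\theta)^2-z_t^2|\le\sum_{t\ge0}\gamma^t|z_t(d\theta)-z_t|\bigl(|z_t(d\theta)|+|z_t|\bigr)$; the head $t\le T_0$ then contributes at most $2M(T_0+1)C_{T_0}\|d\theta\|_1$ and the tail $t>T_0$ at most $C\sum_{t>T_0}\gamma^t(t-T_0)\,a^{\,2(t-T_0)}\|d\theta\|_1$, a convergent series since $\gamma<1$ and $a<1$. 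Both bounds are uniform in $u$, so integrating over $u\in(-5,5)$ against $\tfrac{1}{10}\,\d u$ gives the claim with $\KK'$ the sum of the two constants (enlarged if needed to cover $\|d\theta\|_1\ge\eta_0$ via the a priori bound). The main obstacle is precisely the control of the perturbed orbit near the kinks of $F$, where the local Lipschitz constant blows up like $1/\delta$: it is resolved by the observation that every orbit visits these dangerous regions only a bounded (uniform) number of times before being absorbed into the contracting interval $I'$, so the exponential-in-$t$ sensitivity is capped at the fixed time $T_0$, and from $T_0$ onward the dynamics are linear and contracting.
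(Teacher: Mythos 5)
Your proof is correct and rests on the same key mechanism as the paper's: since $(A,B)=(0,1)$ the trajectory is just the orbit of $F$ (resp.\ of its perturbation), and every orbit started in $(-5,5)$ is absorbed after a uniformly bounded number of steps (the paper says at most three) into the region where $F$ is linear with slope $\omega_0$, $|\omega_0|<1$; hence the large local Lipschitz constant of order $1/\delta$ near the tents is encountered only a bounded number of times, and the first-order sensitivity of the whole trajectory to $d\theta$ is uniformly $O(\norm{d\theta}_1)$. The bookkeeping, however, is organized differently. The paper runs a recursion on the cost difference itself, $\Delta C[u_0]\le\gamma\,\Delta C[\,\cdot\,]+R\big(u_1(d\theta)^2-u_1^2\big)+\gamma\big(C[u_1(d\theta)](\theta)-C[u_1](\theta)\big)$, telescopes it to extract the $1/(1-\gamma)$ factor, and then bounds the same-policy, different-initial-point gap $C[u_1(d\theta)](\theta)-C[u_1](\theta)$ by $(1+R)\big(L^6+\gamma^4/(1-\gamma\omega_0^2)\big)$ times a first-order term; you instead prove trajectory-level perturbation bounds $|z_t(d\theta)-z_t|\lesssim\norm{d\theta}_1$ (a fixed constant for $t\le T_0$, geometrically weighted for $t>T_0$) and sum the cost termwise with a head/tail split. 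Your route is somewhat more self-contained at points the paper glosses over --- the uniform a priori bound on orbits, the forward-invariance of the contracting interval under the \emph{perturbed} policy (needed so the tail estimate applies to both orbits simultaneously), the treatment of non-small $\norm{d\theta}_1$, and the fact that the paper's substitution of $\Delta C[u_1(d\theta)]$ by $\Delta C[u_0]$ really requires taking a supremum over initial conditions --- while the paper's recursion is shorter. Both yield a constant $\KK'$ of the same structure: a fixed power of the global Lipschitz constant of $F$ multiplying a convergent geometric series.
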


\begin{proof}[Proof of Lemma \ref{lem:bound contribution of uniform measure}]
For any $u_0 \in [-5,5]$, we can write the cost difference as
\begin{equs}
\Delta C[u_0](d\theta)
&\coloneqq
	C[u_0](\theta + d\theta) - C[u_0](\theta) \\
&=
	\pc{u_0^2 + R u_1(d\theta)^2 + \gamma C[u_1(d\theta)](\theta + d\theta)} -
	\pc{u_0^2 + R(u_1)^2 + \gamma C[u_1](\theta)} \\
&=
	\gamma \Delta C[u_1(d\theta)](d\theta) +
	R(u_1(d\theta)^2 - (u_1)^2) +
	\gamma \pc{ C[u_1(d\theta)](\theta) - C[u_1](\theta)} \\
&\leq
	\gamma\Delta C[u_0](d\theta) +
	R(u_1(d\theta)^2 - (u_1)^2) +
	\gamma \pc{ C[u_1(d\theta)](\theta) - C[u_1](\theta) } \\
&\leq
  	\gamma\Delta C[u_0](d\theta) +
  	2Ru_1\pd{F(u_0) d\theta_1 + u_0 d\theta_0} +
  	\gamma \pc{ C[u_1(d\theta)](\theta) - C[u_1](\theta)}\\
&\leq
    	\gamma\Delta C[u_0](d\theta) +
    	2R(2.5)(5) \norm{d\theta}_1 +
    	\gamma \pc{ C[u_1(d\theta)](\theta) - C[u_1](\theta)},
\end{equs}
since we assume, in the construction of the example, that $0 < R < \gamma$.
Iterating the bound established above we obtain
\begin{align}
\notag \Delta C[u_0](d \theta)
&\leq
	\sum_{t=0}^\infty \gamma^t  \pc{ 2R(2.5)(5) \norm{d\theta}_1 +
	\gamma \pc{ C[u_1(d\theta)](\theta) - C[u_1](\theta)}}\\
&=
	\frac {2R(2.5)(5) \norm{d\theta}_1 +
	\gamma \pc{ C[u_1(d\theta)](\theta) - C[u_1](\theta)}}{1-\gamma}\label{e:DeltaC}
\end{align}

We now proceed to estimate the numerator of the above expression. To do so, let $L = \frac{\abs{\omega_0} + \abs{\omega_1}}{\delta_1}$ denote the Lipschitz constant of $F$ on $(-5,5)$. Notice that in the interval $(-1, 1)$, the Lipschitz constant of $F$ restricted to this interval is $\abs{\omega_0}$ and that on $(-5,5)$ $F$ is non-expansive ($\abs{F(x)} \leq \abs{x}$) and $F^2$ is contractive ($\exists k \in [0,1)$ s.t. $\abs{F^2(x)} \leq k\abs{x}$) by construction. Since $5\abs{\omega_0}^2 > 1$ but $5\abs{\omega_0}^3 < 1$, any initial point will take at most three steps before it is mapped into the region with Lipschitz constant $\abs{\omega_0}$.
\begin{equs}
C[u_1(d\theta)](\theta) & - C[u_1](\theta)
\leq
	(1+R)\sum_{t=0}^{\gamma} \gamma^t \bigg(
		\underbrace{F \circ \cdots \circ F}_{\text{$t$ times}} (u_1(d\theta))^2 -
		\underbrace{F \circ \cdots \circ F}_{\text{$t$ times}} (u_1)^2
	\bigg) \\
&\leq
	(1+R) \pc{
		\sum_{t=0}^{3} \gamma^t L^{2t} \pc{u_1(d\theta)^2 - (u_1)^2} +
		\gamma^4 \sum_{t=0}^{\infty} \gamma^t \abs{\omega_0}^{2t}
		\pc{u_1(d\theta)^2 - (u_1)^2}
	} \\
&\leq
	(1+R)\pc{ L^6 + \frac{\gamma^4}{1-\gamma \omega_0^2} }\pc{u_1(d\theta)^2 - (u_1)^2} \\
&=
	(1+R)\pc{ L^6 + \frac{\gamma^4}{1-\gamma \omega_0^2} }
	\pc{
		2u_1\pc{F(u_0) d \theta_1 + u_0 d\theta_0} +
		\pc{F(u_0) d \theta_1 + u_0 d\theta_0}^2
	} \\
&\leq
	L'' \norm{d\theta}_1,
\end{equs}
for $\norm{d \theta}$ small enough, taking only the first order term. Combining the above with \eqref{e:DeltaC} and defining $L' = \gamma L'' + 25 R$ finally gives
\begin{equ}\label{e:k'}
\Delta C[u_0](d\theta)
\leq
% \sum_{t=1}^{\infty} \gamma^t \pc{ 2\gamma L' \norm{d\theta}_1 }
% =
\KK' \norm{d\theta}_1\,, \qquad \text{for }\qquad \KK' := \frac{L'}{1-\gamma} \norm{d\theta}_1\,,
\end{equ}
as claimed.
\end{proof}

\CounterexampleProp*

\begin{proof}[Proof of Proposition \ref{prop:counterex local min}]
Recalling the definitions of $\KK,\KK'$ from \eqref{e:k} and \eqref{e:k'} respectively, for any
\begin{align}\label{e:epsilon}
0<\epsilon < \frac{\KK}{24\delta \KK' + \KK}\,,
\end{align}
and for $d\theta \neq 0$ sufficiently small, since $\mu_0 = \mu_{0,\tiny\text{pp}} + \frac{\epsilon}{10}$ for $\mu_{0,\tiny\text{pp}} = \frac{1-\epsilon}{2}\delta_{x_0} + \frac{1-\epsilon}{2}\delta_{y_0}$ we have that
\begin{equs}
\int_{-5}^{5} (C(\theta + d\theta) - &C(\theta)) \d \mu_0(x)
=\\
&=
	\frac{1-\epsilon}{2}
	\int_{-5}^{5} \pc{C(\theta + d\theta) - C(\theta)} \d \mu_{0,\tiny\text{pp}} +
	\epsilon \int_{-5}^{5} \pc{C(\theta + d\theta) - C(\theta)} \frac{\d x}{10} \\
&\geq
	\frac{1-\epsilon}{2} \frac{\KK}{12\delta} \norm{ d \theta }_1 -
	\epsilon \KK' \norm{d\theta}_1 \\
&=
	\frac{\KK}{24\delta}\norm{d\theta}_1 -
	\epsilon\pc{\KK' + \frac{\KK}{24\delta}} \norm{d\theta}_1
	>
	0,
\end{equs}
where in the second line we have combined Corollary~\ref{cor:cones} and Lemma \ref{lem:bound contribution of uniform measure}.
% The non-overlapping of the cones is essential for this argument.
% If the cones did overlap, there would be a region in parameter space where we could not guarantee that the cost difference was positive; however, since there is no overlap, the lower bound will hold for all $d\theta \neq 0$ sufficiently small.
Thus, we see that the point in parameter space corresponding to $\theta = (0,1)$ is a local minimum of the cost function.

\end{proof}

\section{Proofs of Convergence of the Homotopy Algorithm} \label{apdx:homotopy proofs}

For notational convenience, we define
\begin{equs}
& \AAA \coloneqq A+BK \\
& \ovl{Q} \coloneqq Q + K\transpose RK
\end{equs}
We use $\dtheta = (\dtheta_1, \dots \dtheta_d)$ to denote a perturbation of the parameters. We study the behavior of the parameters in the setting where $\theta = \theta^* + \dtheta$.

\begin{Lemma}\label{lem:continuity of P}
For a fixed quadruple $(A,B,Q,R)$ that is controllable and observable, let $P_\gamma$ denote the unique positive semi-definite solution to the discounted discrete algebraic Riccati equation (discounted DARE) with discount factor $\gamma \in [0,1]$
\begin{equ}
P_\gamma
=
\gamma A^\intercal P_\gamma A - \gamma^2 A^\intercal P_\gamma B
\pc{
	R + \gamma B^\intercal P_\gamma B
}^{-1}
B^\intercal P_\gamma A + Q
\end{equ}
Then, $f: (0,1) \to \Rr^{n \times n}$, $f(\gamma) = P_\gamma$ is a continuous function.
\end{Lemma}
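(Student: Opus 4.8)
The plan is to prove continuity at each fixed $\gamma_0 \in (0,1)$ by a compactness-plus-uniqueness argument: taking an arbitrary sequence $\gamma_n \to \gamma_0$ in $(0,1)$, I would show that every subsequence of $(P_{\gamma_n})$ has a further subsequence converging to $P_{\gamma_0}$, which forces $P_{\gamma_n}\to P_{\gamma_0}$. The structural fact that makes everything work is that a direct substitution turns the $\gamma$-discounted DARE for $(A,B,Q,R)$ into the \emph{undiscounted} DARE for the rescaled data $(\sqrt{\gamma}\,A,\sqrt{\gamma}\,B,Q,R)$; for $\gamma_0\in(0,1)$ this rescaling by $\sqrt{\gamma_0}\neq 0$ leaves the column spans of the controllability and observability matrices unchanged, so the classical existence/uniqueness theory for the DARE \cite{AndersonMooreControlTheory} applies at every such $\gamma_0$.

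Concretely I would proceed in four steps. \textbf{(i) Uniform bounds.} Using controllability of $(A,B)$, fix a Schur-stabilizing gain $K$ and bound the optimal discounted cost $x_0\transpose P_\gamma x_0$ by the (suboptimal) cost of the linear policy $u_t = Kx_t$; since $\gamma^t\le 1$ on $[0,1]$, summing a convergent Lyapunov series yields a single symmetric matrix $M$ with $P_\gamma \preceq M$ for all $\gamma \in [0,1]$. For the lower bound, the matrix inversion lemma gives $P_\gamma - \gamma P_\gamma B(R+\gamma B\transpose P_\gamma B)^{-1}B\transpose P_\gamma = (P_\gamma^{-1}+\gamma BR^{-1}B\transpose)^{-1}\succeq 0$, so the DARE forces $P_\gamma \succeq Q \succ 0$. \textbf{(ii) Subsequential limits.} By Bolzano--Weierstrass, any subsequence of $(P_{\gamma_n})$ has a further subsequence converging to some symmetric $P_\star$ with $Q\preceq P_\star\preceq M$; since $R+\gamma_0 B\transpose P_\star B\succeq R\succ 0$, the right-hand side of the DARE is jointly continuous in $(\gamma,P)$ near $(\gamma_0,P_\star)$, so passing to the limit along this sub-subsequence shows $P_\star$ is a positive semidefinite solution of the $\gamma_0$-DARE. \textbf{(iii) Uniqueness.} By the rescaling observation above, the $\gamma_0$-DARE has a unique positive semidefinite solution, hence $P_\star = P_{\gamma_0}$. \textbf{(iv)} Conclude as described above, and then note that $\gamma_0$ and the sequence were arbitrary.

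The main obstacle is really just the two ``classical'' inputs: getting the uniform upper bound on $P_\gamma$ in step (i) (the only place controllability enters) and invoking uniqueness in step (iii) (where the $\sqrt{\gamma}$-rescaling is essential, since uniqueness of the \emph{positive semidefinite} DARE solution needs controllability/observability of the scaled pair). I should also mention an alternative route that gives more, namely that $\gamma\mapsto P_\gamma$ is $C^\infty$ on $(0,1)$: apply the implicit function theorem to the smooth DARE residual $G(\gamma,P)$, whose partial Fr\'echet derivative in $P$ at $(\gamma,P_\gamma)$ is the Lyapunov-type operator $H\mapsto H-\gamma\bar A_\gamma\transpose H\bar A_\gamma$ with $\bar A_\gamma = A+BK_\gamma^*$; this is invertible because the stabilizing property of the DARE solution makes $\sqrt{\gamma}\,\bar A_\gamma$ Schur, so $\specrad{\sqrt{\gamma}\,\bar A_\gamma}^2<1$ and the operator has no zero eigenvalue.
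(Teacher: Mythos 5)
Your proof is correct, but it takes a genuinely different route from the paper. The paper's own proof is essentially a verification-plus-citation: it notes that controllability of $(A,B)$ and observability of $(A,D)$ are preserved under the rescaling $A \mapsto \sqrt{\gamma}A$, that $R + \gamma B^\intercal P_\gamma B$ is invertible, and then invokes a known continuity theorem for solutions of Riccati equations with respect to their coefficient matrices \cite[Theorem 2.4]{continuityriccati}. Your argument is self-contained: the uniform sandwich $Q \preceq P_\gamma \preceq M$ (upper bound from a fixed stabilizing gain, valid uniformly since $\gamma^t \le 1$), compactness, passage to the limit in the fixed-point equation, and uniqueness of the positive semidefinite solution of the rescaled undiscounted DARE to identify the subsequential limit. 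This buys independence from the external reference at the cost of length; it also only needs joint continuity of the DARE map, not any differentiable structure. Your alternative route via the implicit function theorem --- invertibility of the Stein operator $H \mapsto H - \gamma \bar A_\gamma^\intercal H \bar A_\gamma$ because $\sqrt{\gamma}\,\bar A_\gamma$ is Schur --- is closer in spirit to what the cited theorem actually proves, and yields the stronger conclusion of smoothness. Two small points to tidy up: the identity $P - \gamma PB(R+\gamma B^\intercal PB)^{-1}B^\intercal P = (P^{-1} + \gamma BR^{-1}B^\intercal)^{-1}$ presupposes the invertibility of $P_\gamma$ that you are in the middle of establishing, so either take a Schur-complement route to the positivity of that expression, or more simply observe that $x^\intercal P_\gamma x$ is an optimal cost whose first stage already contributes $x^\intercal Q x \ge 0$, giving $P_\gamma \succeq Q$ directly; and the lemma as stated only claims continuity on $(0,1)$, so the endpoint $\gamma$-values in your uniform bound are a convenience rather than a necessity.
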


\begin{proof}[Proof of \lref{lem:continuity of P}]
For any $\gamma \in [0,1]$, since $(A,B)$ is controllable, we know that $(\sqrt{\gamma} A, B)$ also controllable and thus stabilizable. Similarly, $(\sqrt{\gamma} A, D)$ is also observable, and by the positive definiteness of $R$ and positive semidefiniteness of $P_\gamma$, $(R + \gamma B^\intercal P_\gamma B)^{-1}$ exists and is continuous. By \cite[Theorem 2.4]{continuityriccati}, we conclude that $P_\gamma$ is a continuous function of $\gamma \in [0,1]$.
\end{proof}

Since the optimal policy is a continuous function of $P_\gamma$, \lref{lem:continuity of P} immediately gives us
\begin{Cor}\label{cor:continuity of K}
The optimal policy $K_\gamma^*$ is continuous in $\gamma$.
\end{Cor}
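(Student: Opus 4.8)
\textbf{Proof proposal for Corollary \ref{cor:continuity of K}.}

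The plan is to read off continuity of $\gamma \mapsto K_\gamma^*$ directly from the closed form \eqref{eq:lqr optimal control},
\[
K_\gamma^* = -\gamma\,\bigl(2R + \gamma B^\intercal P_\gamma B\bigr)^{-1} B^\intercal P_\gamma A,
\]
by exhibiting it as a composition of continuous maps. First I would invoke Lemma \ref{lem:continuity of P} to get that $\gamma \mapsto P_\gamma$ is continuous from $(0,1)$ (or $[0,1]$, as in the statement of that lemma) into $\Rr^{n\times n}$. Then the map $\gamma \mapsto (\gamma, P_\gamma)$ is continuous, and post-composing with the polynomial maps $(\gamma,P) \mapsto B^\intercal P A$ and $(\gamma, P) \mapsto 2R + \gamma B^\intercal P B$ keeps continuity, since matrix addition, multiplication by the fixed matrices $A,B$, and scalar multiplication by $\gamma$ are all continuous.

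The one point that needs a sentence of justification is that the inversion step is legitimate and continuous along the whole path. Here I would note that since $R \succ 0$ and, for $\gamma \in [0,1]$, $\gamma B^\intercal P_\gamma B \succeq 0$ (because $\gamma \geq 0$ and $P_\gamma \succeq 0$ by Lemma \ref{lem:continuity of P}), we have $2R + \gamma B^\intercal P_\gamma B \succ 0$ for every $\gamma$; in particular it is invertible. Since matrix inversion is continuous on the open set $GL_m(\Rr)$ of invertible matrices, the composition $\gamma \mapsto (2R + \gamma B^\intercal P_\gamma B)^{-1}$ is continuous, and multiplying by the already-continuous factors $-\gamma$ and $B^\intercal P_\gamma A$ yields continuity of $\gamma \mapsto K_\gamma^*$.

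There is essentially no obstacle in this corollary: all the real work is in Lemma \ref{lem:continuity of P} (which in turn rests on the cited continuity-of-the-Riccati-solution result \cite{continuityriccati}), and the corollary is just the observation that $K_\gamma^*$ is an algebraic — and, on the relevant domain, non-degenerate — function of $\gamma$ and $P_\gamma$. The only care needed is to confirm that the matrix being inverted stays uniformly nonsingular along the path, which follows from positive definiteness of $R$ as noted above; if one wanted to be fully careful near the endpoints, one could restrict to a compact subinterval $[\gamma_0,\gamma_1] \subset (0,1)$, which is all that is needed for the homotopy argument in \tref{thm:homotopy for nonlinear case}.
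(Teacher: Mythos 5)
Your proposal is correct and follows the same route as the paper, which simply observes that $K_\gamma^*$ is a continuous function of $\gamma$ and $P_\gamma$ via the closed form \eqref{eq:lqr optimal control} and then invokes Lemma \ref{lem:continuity of P}. You merely spell out the details the paper leaves implicit, in particular the useful observation that $2R + \gamma B^\intercal P_\gamma B \succ 0$ uniformly (so the inversion is continuous along the whole path), which is a worthwhile point but not a departure from the paper's argument.
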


We preface the next two lemmas by reminding the reader that the expansion of the cost function that we consider is around the \emph{optimal} policy $K_{\gamma}^*$. However, since this notation is a bit tedious, we simply use $K$ to refer to this optimal policy in our coming computations. Here, we are using Assumption \ref{asm:linear functions in span} when we stipulate that $\pi_{\theta^*} = K_{\gamma}^*$.

\begin{Lemma} \label{lem:firstorderapprox}
The first-order term in the Taylor expansion of the cost (with respect to the parameters) evaluates to zero at optimality.
\end{Lemma}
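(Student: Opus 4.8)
The plan is to compute the gradient of $C_\gamma(\theta)$ with respect to $\theta$ at $\theta = \theta^*$ using the chain rule through the linear map $\theta \mapsto \pi_\theta = \sum_k \theta_k f_k$, and then invoke the known formula for the gradient of the LQR cost in the linear-policy setting. First I would recall from \cite{fazel2018} (quoted in the Preliminaries) that for a linear policy $K$ the cost gradient is $\nabla C(K) = 2\bigl((R + \gamma B\transpose P_K B)K + \gamma B\transpose P_K A\bigr)\Sigma_K$, where $\Sigma_K$ is the (positive definite, since $\rho_0$ has full support) state covariance and $P_K$ the cost-to-go matrix. The key observation is that at the $\gamma$-optimal linear policy $K = K_\gamma^*$, the optimality condition derived from the DARE — namely $(R + \gamma B\transpose P_\gamma B)K_\gamma^* + \gamma B\transpose P_\gamma A = 0$, which is exactly \eqref{eq:lqr optimal control} rearranged — forces the bracketed factor to vanish, so $\nabla C(K_\gamma^*) = 0$ as a matrix in $\mathbb R^{m\times n}$.

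Next I would transfer this to the parametrized landscape. Since $\pi_\theta$ depends linearly on $\theta$, the $k$-th partial derivative of the cost is, by the chain rule, an inner product of $\nabla C$ evaluated at $\pi_{\theta^*} = K_\gamma^*$ (which lies in the linear policy class by Assumption \ref{asm:linear functions in span}) against the ``direction'' $f_k$; more precisely one writes $\partial_{\theta_k} C_\gamma(\theta^*)$ as a trace pairing $\langle \nabla C(K_\gamma^*), D_k\rangle$ where $D_k$ encodes how perturbing $\theta_k$ moves the policy. Because $\nabla C(K_\gamma^*) = 0$, every such partial derivative is zero, hence the first-order term in the Taylor expansion of $C_\gamma$ about $\theta^*$ vanishes identically. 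A small amount of care is needed because $f_k$ is nonlinear in the state, so the "direction" is a nonlinear function rather than a matrix; but since the cost's dependence on the policy enters only through evaluations $\pi_\theta(x_t)$ along trajectories, and the first-order variation of the cost with respect to an arbitrary policy perturbation $\eta$ is the same expression one obtains by formally treating $\eta$ as a linear perturbation $\delta K$ integrated against $\Sigma$ — this is precisely the content of the deterministic policy gradient theorem of \cite{silver14} — the argument goes through.

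The main obstacle I anticipate is making precise the claim that the first-order variation of $C_\gamma$ in the direction of a nonlinear perturbation $f_k$ of the policy is captured by the same linear-algebraic quantity $\bigl((R+\gamma B\transpose P_{K}B)K + \gamma B\transpose P_{K}A\bigr)$ paired against an appropriate second-moment object. One clean way around this is to differentiate directly: write $C_\gamma(\theta) = \Ex{x_0\sim\rho_0}{\sum_t \gamma^t (x_t\transpose Q x_t + \pi_\theta(x_t)\transpose R \pi_\theta(x_t))}$ with $x_{t+1} = Ax_t + B\pi_\theta(x_t)$, differentiate in $\theta_k$, and use that at $\theta^*$ the closed loop $x_{t+1} = (A + BK_\gamma^*)x_t$ is the optimal trajectory; the terms then reorganize (via an Abel summation / Lyapunov-equation manipulation, exactly as in the proof of Lemma 1 of \cite{fazel2018}) into $2\,\mathbb E\bigl[\sum_t \gamma^t\, \partial_{\theta_k}\pi_\theta(x_t)\transpose\bigl((R+\gamma B\transpose P_\gamma B)K_\gamma^* + \gamma B\transpose P_\gamma A\bigr)x_t\bigr]$, and the inner matrix factor is zero by \eqref{eq:lqr optimal control}. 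This avoids needing $\Sigma$ to be interpreted through the nonlinear features at all.
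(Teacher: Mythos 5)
Your proposal is correct and, in the form you settle on in your final paragraph, coincides with the paper's own proof: both compute the first-order variation of the cost along the optimal closed-loop trajectory directly (the paper via a Volterra-type expansion of $x_t$ in the perturbation $\tilde\pi$, you via direct differentiation and Abel resummation), reorganize it into $\sum_t \gamma^t\, \tilde\pi(x_t^{(0)})\transpose \bigl( (R+\gamma B\transpose P_\gamma B)K_\gamma^* + \gamma B\transpose P_\gamma A \bigr) x_t^{(0)}$ plus its transpose, integrated over $\rho_0$, and conclude from the DARE optimality condition that the bracketed factor vanishes. You were also right to distrust the shortcut through $\nabla C(K)=2\bigl((R+\gamma B\transpose P_K B)K+\gamma B\transpose P_K A\bigr)\Sigma_K$ on its own, since that formula only pairs the error matrix against linear directions $\delta K$ through the covariance $\Sigma_K$, whereas the nonlinear feature directions $f_k$ require exactly the trajectory-wise expression you write down --- which is the computation the paper carries out.
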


Let $\pi$ denote the policy and let $\tilde{\pi}$ denote the perturbation of $\pi$ (\ie $\pi_{\theta^*+\dtheta} = \pi_{\theta^*} + \tilde{\pi}$). Furthermore, recall that our policy representation is linear in the parameters. We first compute the zero'th, first, and second order terms of the expansion of the trajectory.
\begin{equs}
x_{t}^{(0)}
&=
	\AAA^t x_0 \\
x_{t}^{(1)}
&=
	\sum_{l=0}^{t-1} \AAA^l B \tilde{\pi}( x_{t-l-1}^{(0)} ) \\
x_{t}^{(2)}
&=
	\sum_{l=1}^{t-1} \AAA^l B \pc{
		\tilde{\pi}( x_{t-l-1}^{(0)} + x_{t-l-1}^{(1)} ) -
		\tilde{\pi}( x_{t-l-1}^{(0)} )
	}
\end{equs}
where we ignore all non-second-order terms to reach the last equality. We note that, by the Lipschitz continuity of policy $\pi$, all remaining terms in the first order term of the Volterra expansion for the given system are of order higher than 1.

We can bound the Euclidean norm of the second order term in the expansion using the assumed Lipschitz continuity of $\pi$:
\begin{equs}
\norm{x_{t}^{(2)}}
&=
	\norm*{ \sum_{l=1}^{t-1} \AAA^l B \pc{
		\tilde{\pi}( x_{t-l-1}^{(0)} + x_{t-l-1}^{(1)} ) -
		\tilde{\pi}( x_{t-l-1}^{(0)} )
	} } \\
&\leq
	\sum_{l=1}^{t-1} \norm{ \AAA^l B } \norm{
		\tilde{\pi}( x_{t-l-1}^{(0)} + x_{t-l-1}^{(1)} ) -
		\tilde{\pi}( x_{t-l-1}^{(0)} )
	} \\
&\leq
	\text{Lip}(\tilde{\pi}) \sum_{l=1}^{t-1} \norm{\AAA^l B} \norm{ x_{t-l-1}^{(1)} }
\end{equs}

\begin{proof}[Proof of \lref{lem:firstorderapprox}]
We consider a perturbation of the parameters of the policy. For any $x_0 \in \supp(\rho_0)$, the first order term of the Taylor approximation is
\begin{equs}
C^{(1)}[x_0]
&=
	\sum_{t=1}^{\infty} \gamma^t
	\pc{ \pc{ x_t^{(1)} }\transpose \ovl{Q} x_t^{(0)} +
	\pc{ x_t^{(0)} }\transpose \ovl{Q} \pc{ x_t^{(1)} } }
	\\ &\qquad +
	\sum_{t=0}^{\infty} \gamma^t \pc{
		\tilde{\pi}( x_{t}^{(0)} )\transpose
		RK x_t^{(0)} +
		\pc{ x_t^{(0)} }\transpose K\transpose R
		\tilde{\pi}( x_{t}^{(0)} )
	} \\
&=
	\sum_{t=1}^{\infty} \gamma^t \sum_{l=0}^{t-1}
	\pc{\AAA^{t-l-1} B\tilde{\pi}( x_{l}^{(0)} )}\transpose \ovl{Q} \AAA^t x_0
	+
	\sum_{t=1}^{\infty} \gamma^t
	\pc{ \AAA^t x_0 }\transpose \ovl{Q} \sum_{l=0}^{t-1}
	\pc{ \AAA^{t-l-1} B \tilde{\pi}( x_{l}^{(0)} ) }\\
	&\qquad +
	\sum_{t=0}^{\infty} \gamma^t \pc{
		\tilde{\pi}( x_{t}^{(0)} )\transpose RK \AAA^t x_0 +
		\pc{\AAA^t x_0}\transpose K\transpose R \tilde{\pi}( x_{t}^{(0)} )
	} \\
&=
	\sum_{t=1}^{\infty} \gamma^t \pc{ \sum_{l=0}^{t-1}
	\pc{ \AAA^{t-l-1} B \tilde{\pi}( x_{l}^{(0)} ) }\transpose \ovl{Q}\AAA^t x_0
	+
	\pc{\AAA^t x_0}\transpose \ovl{Q}
	\sum_{l=0}^{t-1} \omega  \pc{ \AAA^{t-l-1} B \tilde{\pi}( x_{l}^{(0)} ) } }\\
	&\qquad +
	\sum_{t=0}^{\infty} \gamma^t \pc{
	\tilde{\pi}( x_{t}^{(0)} )\transpose RK \AAA^t x_0 +
	\pc{\AAA^t x_0}\transpose K\transpose R \tilde{\pi}( x_{t}^{(0)} ) }\\
\intertext{Let $s=t-l$, changing the order of summation, we get}
&=
	\sum_{l=0}^{\infty} \gamma^l
	\tilde{\pi}( x_{l}^{(0)} )\transpose \pq{
		B\transpose \pc{
			\sum_{s=1}^{\infty}\gamma^s \pc{\AAA^{s-1}}\transpose \ovl{Q} \AAA^{s-1}
		} \AAA
	} \AAA^l x_0
	\\ &\qquad +
	\sum_{t=0}^{\infty} \gamma^t
	\tilde{\pi}( x_{t}^{(0)} )\transpose\pq{RK}\AAA^t x_0
	\\ &\qquad +
	\sum_{l=0}^{\infty} \gamma^l
	\pc{\AAA^l x_0}\transpose \pq{
		\AAA\transpose \pc{
			\sum_{s=1}^{\infty}\gamma^s \pc{\AAA^{s-1}}\transpose \ovl{Q} \AAA^{s-1}
		} B
	} \tilde{\pi}( x_{l}^{(0)} )
	\\ &\qquad +
	\sum_{t=0}^{\infty} \gamma^t
	\pc{\AAA^t x_0}\transpose\pq{K\transpose R} \tilde{\pi}( x_{t}^{(0)} ) \\
&=
	\sum_{l=0}^{\infty} \gamma^l \tilde{\pi}( x_{l}^{(0)} )\transpose
	\pc{ \gamma B\transpose P_\gamma \pc{ A + BK } + RK } \AAA^l x_0
	\\ &\qquad +
	\sum_{l=0}^{\infty} \gamma^l \pc{\AAA^l x_l}\transpose
	\pc{ \gamma \pc{ A + BK }\transpose P_\gamma B + K\transpose R }
	\tilde{\pi}( x_{l}^{(0)} )
\end{equs}
which evaluates to zero at $K^*$ since $K^* = -\gamma \pc{R + \gamma B\transpose P_\gamma B}^{-1} B\transpose P_\gamma A$. This is consistent with the fact that the first order term is the gradient multiplied by $\dtheta$. This calculation gives us an explicit form of the term that will simplify the proof of the next lemma.
\end{proof}

\begin{Lemma} \label{lem:secondorderapprox}
Let $\rho_0$ have full support, then the Hessian is symmetric and strictly positive definite at optimality.
\end{Lemma}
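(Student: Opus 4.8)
My plan is to avoid the direct second-order expansion of the trajectory and instead obtain the Hessian from a cost-shaping (performance-difference) identity relative to the optimal linear policy, which makes both symmetry and positivity transparent. Write $K=K_\gamma^*$, $P=P_\gamma$, $\AAA=A+BK$ and $M=R+\gamma B\transpose P B\succeq R\succ0$, and recall that $\pi_{\theta_\gamma^*}=K$ by Assumption~\ref{asm:linear functions in span}, so the perturbed policy satisfies $\pi_{\theta_\gamma^*+\dtheta}(x)-Kx=\sum_{k=1}^{d}\dtheta_k f_k(x)=:\tilde\pi(x)$. By Lemma~\ref{lem:firstorderapprox} the first-order Taylor term of $C_\gamma$ at $\theta_\gamma^*$ already vanishes, so $\tfrac12\dtheta\transpose H_\gamma\dtheta$ is precisely the second-order part of $\dtheta\mapsto C_\gamma(\theta_\gamma^*+\dtheta)-C_\gamma(\theta_\gamma^*)$, which is what I need to compute.

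First I would record the elementary identity, valid for all $x\in\Rr^n$, $u\in\Rr^m$,
\[
  x\transpose Q x+u\transpose R u+\gamma(Ax+Bu)\transpose P(Ax+Bu)-x\transpose P x=(u-Kx)\transpose M(u-Kx),
\]
obtained by completing the square in $u$ together with the Bellman equation for the DARE, which says $x\transpose P x=\min_u\{x\transpose Q x+u\transpose R u+\gamma(Ax+Bu)\transpose P(Ax+Bu)\}$ with minimizer $u=Kx$. Summing this with weights $\gamma^t$ along the trajectory $\{x_t\}$ generated by $\pi_{\theta_\gamma^*+\dtheta}$ from $x_0$, the terms $x_t\transpose P x_t$ telescope (and $\gamma^t x_t\transpose P x_t\to0$ whenever the perturbed discounted cost is finite, which holds for $\norm{\dtheta}$ small since $Q\succ0$), yielding the exact formula
\[
  C_\gamma(\theta_\gamma^*+\dtheta)-C_\gamma(\theta_\gamma^*)=\Ex{x_0\sim\rho_0}{\,\sum_{t=0}^{\infty}\gamma^t\,\tilde\pi(x_t)\transpose M\,\tilde\pi(x_t)\,}\;\geq\;0 .
\]

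Next I would extract the Hessian from this. Along a trajectory $\tilde\pi(x_t)=\tilde\pi(\AAA^t x_0)+O(\norm{\dtheta}^2)$, since $\tilde\pi$ has Lipschitz constant $O(\norm{\dtheta})$ while $\norm{x_t-\AAA^t x_0}=O(\norm{\dtheta})$ (the perturbed closed loop being an $O(\norm{\dtheta})$ perturbation of $x\mapsto\AAA x$), both estimates resting on the global Lipschitzness of the $f_k$ and on $\specrad{\sqrt\gamma\,\AAA}<1$. Substituting and keeping the quadratic part gives $(H_\gamma)_{kj}=2\,\Ex{x_0\sim\rho_0}{\sum_{t=0}^{\infty}\gamma^t f_k(\AAA^t x_0)\transpose M f_j(\AAA^t x_0)}$, a matrix that is manifestly symmetric in $k,j$ (hence $H_\gamma$ is symmetric), and whose associated form $\dtheta\transpose H_\gamma\dtheta=2\,\Ex{x_0\sim\rho_0}{\sum_t\gamma^t\tilde\pi(\AAA^t x_0)\transpose M\tilde\pi(\AAA^t x_0)}$ is a convergent superposition of nonnegative terms, so $H_\gamma\succeq0$. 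For strictness I would keep only the $t=0$ term (note that $x_0$ does not depend on $\dtheta$): if $\dtheta\transpose H_\gamma\dtheta=0$ then $\Ex{x_0\sim\rho_0}{\tilde\pi(x_0)\transpose M\tilde\pi(x_0)}=0$, and since $M\succ0$ this forces $\sum_k\dtheta_k f_k(x_0)=0$ for $\rho_0$-almost every $x_0$; by full support of $\rho_0$ and continuity of the $f_k$, $\sum_k\dtheta_k f_k\equiv0$, and Assumption~\ref{asm:linear indep} then gives $\dtheta=0$. Hence $H_\gamma\succ0$.

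The conceptual content is short; the hard part will be the analytic bookkeeping in the second step, namely making the $O(\norm{\dtheta})$ and $O(\norm{\dtheta}^2)$ estimates uniform in $t$ and in $x_0$ so that they survive both the summation against $\gamma^t$ and the integration against $\rho_0$. Concretely this means fixing a submultiplicative norm adapted to $\AAA$ in which $\sqrt\gamma\,\AAA$ is a strict contraction, bounding $\norm{x_t}$ and $\norm{x_t-\AAA^t x_0}$ along the perturbed trajectory for all $\norm{\dtheta}$ in a fixed neighborhood of the origin, and checking the resulting series are uniformly dominated. If one prefers not to invoke the cost-shaping identity, the same $H_\gamma$ can be read off the direct Volterra expansion set up before this lemma by collapsing the terms involving $x_t^{(1)}$ and $x_t^{(2)}$ via the change-of-summation-order already used in the proof of Lemma~\ref{lem:firstorderapprox}; the identity above is simply the most economical route to the positivity, which is the crux of the statement.
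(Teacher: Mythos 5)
Your proposal is correct and reaches the lemma by a genuinely different route. The paper computes the second-order term head-on from the Volterra expansion $x_t^{(0)},x_t^{(1)},x_t^{(2)}$ of the trajectory and then verifies, through several pages of re-summation, that all cross terms cancel because $K^*=-\gamma(R+\gamma B\transpose P_\gamma B)^{-1}B\transpose P_\gamma A$; you instead invoke the value-difference identity obtained by completing the square in the DARE, which gives the \emph{exact} formula $C_\gamma(\theta_\gamma^*+\dtheta)-C_\gamma(\theta_\gamma^*)=\Ex{x_0}{\sum_t\gamma^t\tilde\pi(x_t)\transpose M\tilde\pi(x_t)}$ with $M=R+\gamma B\transpose P_\gamma B\succ0$. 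This buys you three things: Lemma \ref{lem:firstorderapprox} comes for free (the right-hand side is manifestly quadratic in $\dtheta$ to leading order), symmetry and positive semidefiniteness of $H_\gamma$ are visible without any cancellation bookkeeping, and the strict-positivity endgame via the $t=0$ term, full support of $\rho_0$, continuity of the $f_k$, and Assumption \ref{asm:linear indep} is the same as the paper's. Note that your Hessian, $(H_\gamma)_{kj}=2\,\Ex{x_0}{\sum_t\gamma^t f_k(\AAA^tx_0)\transpose M f_j(\AAA^tx_0)}$, carries the full $M=R+\gamma B\transpose P_\gamma B$ at every time step, whereas the paper's final display has $R$ at every step but $B\transpose P_\gamma B$ only at $t=0$; your form is the one consistent with the gradient formula $\nabla C(K)=2\big((R+\gamma B\transpose P_KB)K+\gamma B\transpose P_KA\big)\Sigma_K$ quoted in Section \ref{sec:prelims}, and in any case both expressions are symmetric and strictly positive definite under the stated assumptions, so the conclusion of the lemma and its use in Theorem \ref{thm:local nonlinear convergence} are unaffected. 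The one piece you must still supply — and you flag it honestly — is the uniform-in-$t$ and $\rho_0$-integrable control of the $O(\norm{\dtheta}^2)$ remainders (finiteness of the perturbed cost for small $\norm{\dtheta}$, $\gamma^t x_t\transpose P x_t\to0$ for the telescoping, and domination of the series after replacing $x_t$ by $\AAA^tx_0$); this is the same analytic work the paper's route also requires when it bounds $\norm{x_t^{(2)}}$ via the Lipschitz constants, so it is not an additional burden of your approach.
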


\begin{proof}[Proof of \lref{lem:secondorderapprox}]
For any $x_0 \in \supp(\rho_0)$, the second order term in the approximation of the cost is as follows.
\begin{equs}
C^{(2)}[x_0]
&=
	\sum_{t=2}^{\infty} \gamma^t
	\pc{
		x_t^{(2)}
	}\transpose
	\ovl{Q} x_t^{(0)}
	+
	\sum_{t=2}^{\infty} \gamma^t
	\pc{
		x_t^{(0)}
	}\transpose
	\ovl{Q} x_t^{(2)}
	\\ &\qquad +
	\sum_{t=1}^{\infty} \gamma^t
	\pc{
		x_t^{(1)}
	}\transpose
	\ovl{Q} x_t^{(1)} +
	\sum_{t=0}^{\infty} \mathunderline{red}{
		\gamma^t
		\pc{
			\tilde{\pi}(x_t^{(0)})
		}\transpose
		R
		\pc{
			\tilde{\pi}(x_t^{(0)})
		}
	} \\
	&\qquad +
	\sum_{t=1}^{\infty} \gamma^t
	\pc{
		\pc{ x_t^{(1)} }\transpose K\transpose R \tilde{\pi}(x_t^{(0)}) +
		\pc{
			\tilde{\pi}( x_t^{(0)} + x_t^{(1)} ) -
			\tilde{\pi}( x_t^{(0)} )
		}\transpose
		RK x_t^{(0)}
	} \\
	&\qquad +
	\sum_{t=1}^{\infty} \gamma^t
	\pc{
		\tilde{\pi}(x_t^{(0)})\transpose R Kx_t^{(1)}  +
		\pc{x_t^{(0)}}\transpose K\transpose
		R\pc{
			\tilde{\pi}( x_t^{(0)} + x_t^{(1)} ) -
			\tilde{\pi}( x_t^{(0)} )
		}
	} \\
\intertext{Notice we can ignore the underlined sum since it is already symmetric. In the remainder of this calculation, we will use a red underline to highlight the symmetric terms that we will omit for clarity. The remaining terms of interest are}
&=
	\sum_{t=1}^{\infty} \gamma^t
	\pc{x_t^{(1)}}\transpose
	\ovl{Q}
	x_t^{(1)}
	+
	\sum_{t=2}^{\infty} \gamma^t \sum_{l=1}^{t-1}
	\pc{
		\tilde{\pi}( x_l^{(0)} + x_l^{(1)} ) -
		\tilde{\pi}( x_l^{(0)} )
	}\transpose
	B\transpose \pc{\AAA^{t-l-1}}\transpose
	\ovl{Q} \AAA^t x_0
	\\
	&\qquad +
	\sum_{t=2}^{\infty} \gamma^t \sum_{l=1}^{t-1}
	\pc{
		\AAA^t x_0
	}\transpose
	\ovl{Q} \AAA^{t-l-1} B \pc{
		\tilde{\pi}( x_l^{(0)} + x_l^{(1)} ) -
		\tilde{\pi}( x_l^{(0)} )
	}
	\\ &\qquad +
	\sum_{t=1}^{\infty} \gamma^t \pc{
		\pc{x_t^{(1)}}\transpose
		K\transpose R \tilde{\pi}( x_t^{(0)} ) +
		\pc{
			\tilde{\pi}( x_t^{(0)} + x_t^{(1)} ) -
			\tilde{\pi}( x_t^{(0)} )
		}\transpose RK \AAA^t x_0
	}
	\\ &\qquad +
	\sum_{t=1}^{\infty} \gamma^t \pc{
		\tilde{\pi}( x_t^{(0)} )\transpose
		R Kx_t^{(1)}  +
		\pc{ \AAA^t x_0}\transpose K\transpose
		R \pc{
			\tilde{\pi}( x_t^{(0)} + x_t^{(1)} ) -
			\tilde{\pi}( x_t^{(0)} )
		}
	} \\
\intertext{Like the proof of \lref{lem:firstorderapprox}, we define $s=t-l$ and change the order of summation to get}
&=
	\sum_{l=1}^{\infty} \gamma^l \pc{
		\tilde{\pi}( x_l^{(0)} + x_l^{(1)} ) -
		\tilde{\pi}( x_l^{(0)} )
	}\transpose
	\textcolor{orange}{\Bigg[}
		B\transpose
		\sum_{s=1}^{\infty} \gamma^s
		\pc{\AAA^{s-1}}\transpose \ovl{Q} \AAA^{s-1} \pc{A + BK}
	\textcolor{orange}{\Bigg]} x_l^{(0)}
	\\	&\qquad +
	\sum_{l=1}^{\infty} \gamma^l \pc{x_l^{(0)}}\transpose
	\textcolor{orange}{\Bigg[}
		(A+BK)\transpose
		\sum_{s=1}^{\infty} \gamma^s
		\pc{\AAA^{s-1}}\transpose \ovl{Q} \AAA^{s-1} B
	\textcolor{orange}{\Bigg]} \pc{
		\tilde{\pi}( x_l^{(0)} + x_l^{(1)} ) -
		\tilde{\pi}( x_l^{(0)} )
	}
	\\ &\qquad +
	\sum_{t=1}^{\infty} \gamma^t \pc{
		\tilde{\pi}( x_t^{(0)} + x_t^{(1)} ) -
		\tilde{\pi}( x_t^{(0)} )
	}\transpose
	\textcolor{orange}{\Bigg[}
		RK
	\textcolor{orange}{\Bigg]} x_t^{(0)}
	+
	\sum_{t=1}^{\infty} \gamma^t \pc{ x_t^{(1)} }\transpose
	K\transpose R \tilde{\pi}( x_t^{(0)} )
	\\ &\qquad +
	\sum_{t=1}^{\infty} \gamma^t \pc{x_t^{(0)}}\transpose
	\textcolor{orange}{\Bigg[}
		RK
	\textcolor{orange}{\Bigg]} \pc{
		\tilde{\pi}( x_t^{(0)} + x_t^{(1)} ) -
		\tilde{\pi}( x_t^{(0)} )
	}
	+
	\sum_{t=1}^{\infty} \gamma^t \tilde{\pi}( x_t^{(0)} )\transpose
	R Kx_t^{(1)}
	\\ &\qquad +
	\sum_{t=2}^{\infty} \gamma^t
	\pc{x_t^{(1)}}\transpose
	\ovl{Q} \sum_{m=1}^{t-1}
	\pc{
		\AAA^{t-m-1} B \tilde{\pi}( x_m^{(0)} )
	} +
	\sum_{t=2}^{\infty} \gamma^t \sum_{m=1}^{t-1}
	\pc{
		\AAA^{t-m-1} B \tilde{\pi}( x_m^{(0)} )
	}\transpose \ovl{Q} x_t^{(1)}
	\\ &\qquad +
	\sum_{t=1}^{\infty} \mathunderline{red}{\gamma^t
	\pc{
		\tilde{\pi}( x_0 )
	}\transpose
	B\transpose \pc{\AAA^{t-1}}\transpose
	\ovl{Q}
	\AAA^{t-1} B \tilde{\pi}( x_0 ) } \\
\intertext{We have dealt with the term containing two copies of $x_t^{(1)}$ by splitting it into two pieces: the sum of all terms where both indices of the inner sums are $0$ and everything else. The former is symmetric, so we can omit it. {Noticing that $P_\gamma = \sum_{t=0}^\infty \gamma^t (\mathcal A^t)^\top \bar Q \mathcal A^t$ we continue from above}}
&=
	\sum_{l=1}^{\infty} \gamma^l \pc{
		\tilde{\pi}( x_l^{(0)} + x_l^{(1)} ) -
		\tilde{\pi}( x_l^{(0)} )
	}\transpose
	\textcolor{orange}{\Bigg[}
		\gamma (R + \gamma B\transpose P_\gamma B)K + \gamma B\transpose P_\gamma A
	\textcolor{orange}{\Bigg]} x_l^{(0)}
	\\ &\qquad +
	\sum_{l=1}^{\infty} \gamma^l \pc{x_l^{(0)}}\transpose
	\textcolor{orange}{\Bigg[}
		\gamma K\transpose(R + \gamma B\transpose P_\gamma B) + \gamma A\transpose P_\gamma B
	\textcolor{orange}{\Bigg]} \pc{
		\tilde{\pi}( x_l^{(0)} + x_l^{(1)} ) -
		\tilde{\pi}( x_l^{(0)} )
	}
	\\ &\qquad +
	\sum_{t=2}^{\infty} \gamma^t \sum_{l=0}^{t-1}
	\tilde{\pi}( x_l^{(0)} )\transpose
	B\transpose \pc{\AAA^{t-l-1}}\transpose
	\ovl{Q}
	\sum_{m=1}^{t-1}
	\pc{
		\AAA^{t-m-1} B \tilde{\pi}( x_m^{(0)} )
	}
	\\ &\qquad +
	\sum_{t=2}^{\infty} \gamma^t \sum_{m=1}^{t-1}
	\pc{
		\AAA^{t-m-1} B \tilde{\pi}( x_m^{(0)} )
	}\transpose
	\ovl{Q}
	\sum_{l=0}^{t-1} \AAA^{t-l-1} B \tilde{\pi}( x_l^{(0)} )
	\\ &\qquad +
	\sum_{t=1}^{\infty} \gamma^t \pc{ x_t^{(1)} }\transpose
	K\transpose R \tilde{\pi}( x_t^{(0)} )
	+
	\sum_{t=1}^{\infty} \gamma^t \tilde{\pi}( x_t^{(0)} )\transpose
	R Kx_t^{(1)} \\
&=
	\sum_{l=1}^{\infty} \gamma^l \pc{
		\tilde{\pi}( x_l^{(0)} + x_l^{(1)} ) -
		\tilde{\pi}( x_l^{(0)} )
	}\transpose
	\textcolor{orange}{\Bigg[}
		\gamma (R + \gamma B\transpose P_\gamma B)K + \gamma B\transpose P_\gamma A
	\textcolor{orange}{\Bigg]} x_l^{(0)}
	\\ &\qquad +
	\sum_{l=1}^{\infty} \gamma^l \pc{x_l^{(0)}}\transpose
	\textcolor{orange}{\Bigg[}
		\gamma K\transpose (R + \gamma B\transpose P_\gamma B) + \gamma A\transpose P_\gamma B
	\textcolor{orange}{\Bigg]} \pc{
		\tilde{\pi}( x_l^{(0)} + x_l^{(1)} ) -
		\tilde{\pi}( x_l^{(0)} )
	}
	\\ &\qquad +
	\sum_{t=2}^{\infty} \gamma^t \sum_{l=0}^{t-1}
	\tilde{\pi}( x_l^{(0)} )\transpose
	B\transpose \pc{\AAA^{t-l-1}}\transpose
	\ovl{Q}
	\sum_{m=1}^{t-1}
	\pc{
		\AAA^{t-m-1} B \tilde{\pi}( x_m^{(0)} )
	}
	\\ &\qquad +
	\sum_{t=2}^{\infty} \gamma^t \sum_{m=1}^{t-1}
	\pc{
		\AAA^{t-m-1} B \tilde{\pi}( x_m^{(0)} )
	}\transpose
	\ovl{Q}
	\sum_{l=0}^{t-1} \AAA^{t-l-1} B \tilde{\pi}( x_l^{(0)} )
	\\ &\qquad +
	\sum_{t=1}^{\infty} \gamma^t \sum_{l=0}^{t-1}
	\pc{
		\tilde{\pi}( x_l^{(0)} )
	}\transpose
	B\transpose \pc{\AAA^{t-l-1}}\transpose
	K\transpose R \tilde{\pi}( x_t^{(0)} )
	\\ &\qquad +
	\sum_{t=1}^{\infty} \gamma^t \sum_{l=0}^{t-1}
	\tilde{\pi}( x_t^{(0)} )\transpose
	R K \AAA^{t-l-1} B \tilde{\pi}( x_l^{(0)} ) \\
&=
	\sum_{l=1}^{\infty} \gamma^l \pc{
		\tilde{\pi}( x_l^{(0)} + x_l^{(1)} ) -
		\tilde{\pi}( x_l^{(0)} )
	}\transpose
	\textcolor{orange}{\Bigg[}
		\gamma (R + \gamma B\transpose P_\gamma B)K + \gamma B\transpose P_\gamma A
	\textcolor{orange}{\Bigg]} x_l^{(0)}
	\\ &\qquad +
	\sum_{l=1}^{\infty} \gamma^l \pc{x_l^{(0)}}\transpose
	\textcolor{orange}{\Bigg[}
		\gamma K\transpose (R + \gamma B\transpose P_\gamma B) + \gamma A\transpose P_\gamma B
	\textcolor{orange}{\Bigg]} \pc{
		\tilde{\pi}( x_l^{(0)} + x_l^{(1)} ) -
		\tilde{\pi}( x_l^{(0)} )
	}
	\\ &\qquad +
	\sum_{l=0}^{\infty} \gamma^l
	\pc{
		\tilde{\pi}( x_l^{(0)} )
	}\transpose B\transpose
	\textcolor{blue}{\Bigg[}
		\sum_{s=1}^{\infty} \gamma^s
		\pc{\AAA^{s-1}}\transpose
		K\transpose R \tilde{\pi}( x_{l+s}^{(0)} )
		\\ &\qquad \qquad \qquad \qquad \qquad \qquad \qquad +
		\sum_{s=2}^{\infty} \gamma^s
		\pc{\AAA^{s-1}}\transpose
		\ovl{Q}
		\sum_{r=1}^{s-1}
		\pc{
			\AAA^{s-r-1} B \tilde{\pi}( x_{l+r}^{(0)} )
		}
	\textcolor{blue}{\Bigg]}
	\\ &\qquad +
	\sum_{l=0}^{\infty} \gamma^l
	\textcolor{blue}{\Bigg[}
		\sum_{s=1}^{\infty} \gamma^s
		\tilde{\pi}( x_{l+s}^{(0)} )\transpose
		R K \AAA^{s-1}
		\\ &\qquad \qquad \qquad \qquad +
		\sum_{s=2}^{\infty} \gamma^s
		\sum_{r=1}^{s-1}
		\pc{
			\AAA^{s-r-1} B \tilde{\pi}( x_{l+r}^{(0)} )
		}\transpose
		\ovl{Q}
		\AAA^{s-1}
	\textcolor{blue}{\Bigg]} B \tilde{\pi}( x_l^{(0)} )
\end{equs}
To finish the proof, notice that the bracketed terms are almost identical the first-order term from \lref{lem:firstorderapprox}. We also note that the terms multiplied to the bracketed terms are guaranteed to be finite due to the assumed Lipschitz continuity of $F$.
\begin{equs}
\textcolor{orange}{\Bigg[} \cdots \textcolor{orange}{\Bigg]}
&=
	0 \qquad
	\text{
		since $K^* = -\gamma
		\pc{R + \gamma B\transpose P_\gamma B}^{-1} B\transpose P_\gamma A$
	}
\\
\textcolor{blue}{\Bigg[} \cdots \textcolor{blue}{\Bigg]}
&=
	\sum_{s=2}^{\infty} \gamma^{s}
	\sum_{r=0}^{s-2}
	\pc{\AAA^{r+1} \AAA^{s-r-2}}\transpose
	\ovl{Q}
	\AAA^{s-r-2} B \pc{
		\tilde{\pi}( x_{l+r+1}^{(0)} )
	}
	\\ &\qquad +
	\sum_{s=1}^{\infty} \gamma^s \pc{\AAA^{s-1}}\transpose
	K\transpose R
	\pc{
		\tilde{\pi}( x_{l+s}^{(0)} )
	} \\
&=
	\sum_{r=0}^{\infty} \gamma^{r+1} \pc{\AAA^r}\transpose
	\Bigg[ \AAA\transpose
		\sum_{m=2}^{\infty} \gamma^{m-1} \pc{\AAA^{m-2}}\transpose
		\ovl{Q}
		\AAA^{m-2} B \Bigg]
		\pc{
			\tilde{\pi}( x_{l+r+1}^{(0)} )
		}
	\\ &\qquad +
	\sum_{s=0}^{\infty} \gamma^{s+1} \pc{\AAA^s}\transpose
	K\transpose R
	\pc{
		\tilde{\pi}( x_{l+s+1}^{(0)} )
	} \\
&=
	\sum_{r=0}^{\infty} \gamma^r \pc{\AAA^r}\transpose
	\Bigg[
		\gamma \pc{A+BK}\transpose P_\gamma B
		+ K\transpose R
	\Bigg]
	\pc{
		\tilde{\pi}( x_{l+r+1}^{(0)} )
	} \\
&=
	0
\end{equs}
Thus, the second-order term of the expansion of the cost function around optimality for the trajectory with initial condition $x_0$ reads
\begin{equs}
C^{(2)}[x_0]
&=
	\sum_{t=0}^{\infty} \gamma^t \pc{
		\tilde{\pi}( x_{t}^{(0)} )\transpose R \tilde{\pi}( x_{t}^{(0)} ) +
		\tilde{\pi}( x_{0} )\transpose B\transpose \pc{\AAA^{t}}\transpose
		\ovl{Q}	\AAA^{t} B \tilde{\pi}( x_{0} )
	} \\
&=
	\tilde{\pi}( x_{0} )\transpose B\transpose P_\gamma B \tilde{\pi}( x_{0} ) +
	\sum_{t=0}^{\infty} \gamma^t
	\tilde{\pi}( x_{t}^{(0)} )\transpose R \tilde{\pi}( x_{t}^{(0)} )
\end{equs}
Since the policy is linear in the parameters, we can formally write this as an inner product where we define multiplication between an $m \times m$ matrix and a vector of $m \times 1$ vectors to be carried out component-wise.
\begin{equs}
~&=
	\dtheta\transpose
	\pc{
		\begin{bmatrix}
			f_1(x_0) \\
			\vdots \\
			f_d(x_0)
		\end{bmatrix}\transpose
		B\transpose P B
		\begin{bmatrix}
			f_1(x_0) \\
			\vdots \\
			f_d(x_0)
		\end{bmatrix}
		+
		\sum_{t=0}^{\infty} \gamma^t
		\begin{bmatrix}
			f_1(x_t) \\
			\vdots \\
			f_d(x_t)
		\end{bmatrix}\transpose
		R
		\begin{bmatrix}
			f_1(x_t) \\
			\vdots \\
			f_d(x_t)
		\end{bmatrix}
	}
	\dtheta \\
&=
	\dtheta\transpose
	\left(
		\begin{bmatrix}
			f_1(x_0)\transpose B\transpose P_\gamma B f_1(x_0) & \cdots & f_1(x_0)\transpose B\transpose P_\gamma B f_d(x_0) \\
			\vdots & \ddots & \vdots \\
			f_d(x_0)\transpose B\transpose P_\gamma B f_1(x_0) & \cdots & f_d(x_0)\transpose B\transpose P_\gamma B f_d(x_0)
		\end{bmatrix}
	\right.
	\\ &\qquad \qquad \qquad  +
	\left.
		\sum_{t=0}^{\infty} \gamma^t
		\begin{bmatrix}
			f_1(x_t)\transpose R f_1(x_t) & \cdots & f_1(x_t)\transpose R f_d(x_t) \\
			\vdots & \ddots & \vdots \\
			f_d(x_t)\transpose R f_1(x_t) & \cdots & f_d(x_t)\transpose R f_d(x_t)
		\end{bmatrix}
	\right)
	\dtheta
\end{equs}
To avoid writing this large expression again, we denote the matrix in the parentheses by $H_{\gamma}$. We use this matrix to define the quadratic form used as the Lyapunov function in the proof of Theorem \ref{thm:local nonlinear convergence}. Given this expression for $C^{(2)}$, we notice that it is always non-negative and is zero if and only if the policy is optimal for the given initial condition, $x$ (\ie $\Delta u_t = 0 \, \forall t$). Consequently, integrating this term against the initial distribution $\rho_0$ will result in a nonzero outcome if and only if the trajectory of almost all initial condition is optimal. Since $\rho_0$ has full support, and by Assumption~\ref{asm:linear indep}
\begin{equ}
C^{(2)}
=
	\int_{\Rr} C^{(2)}[x] ~\d \rho_0(x) = 0
\iff
	C^{(2)}[x] = 0 \,\, \text{$\pq{\rho_0}$-a.e.}
\iff
	\tilde{\pi}( x_{t}^{(0)} ) = \vec{0} \,\, \forall t
\iff
	\Delta \theta = \vec{0}
\end{equ}
\end{proof}

\ThmConvOfFirstIter*

\begin{proof}[Proof of \tref{thm:convergence of first iteration}]
Let $\theta$ be the randomly initialized parameters. When $\gamma = 0$, the cost can be written as
\begin{equ}
C_0(\theta) = \Ex{x_0 \sim \rho_0}{x_0\transpose Qx_0 + u_0\transpose R u_0}
\end{equ}
For convenience, we will omit the subscripts. Notice that the optimal parameters are $\theta^* = \vec{0}$ which correspond to the optimal action of $u=0$. We can calculate
\begin{equs}
\frac{\d}{\d s} \pc{C_0(\theta(s)) - C_0(\theta_0^*)}
&=
	\E{\frac{\d}{\d s}\pc{u\transpose R u}} = \E{2 u\transpose R \frac{\d}{\d s}u} \\
&=
	\E{2 u\transpose R \pc{ \frac{\d}{\d s} \pi_{\theta(s)}(x) } } \\
&=
	\E{2 u\transpose R \dtp{
		\nabla_\theta \pi_\theta(x),
		\frac{\d}{\d s} \theta(s)
	} } \\
&=
	\E{2 u\transpose R \dtp{
		\nabla_\theta \pi_\theta(x) ,
		-\nabla_\theta C_0(\theta)
	} } \\
&=
	\E{2 u\transpose R \dtp{
		\nabla_\theta \pi_\theta(x),
		- \frac{\d}{\d u} C_0(\theta) \cdot \nabla_\theta \pi_\theta(x) }
	 } \\
&=
	\E{
		-4 \dtp{u\transpose R
			\nabla_\theta \pi_\theta(x), u\transpose R \nabla_\theta \pi_\theta(x)
		}
	} \\
&=
	-\Tr\pc{\nabla_\theta C_0\transpose \nabla_\theta C_0}
\end{equs}
where $\dtp{\cdot,\cdot}$ denotes the scalar product in $\mathbb R^d$.
To prove convergence, what remains is to show that the only extremum is the global minimum. In other words, we want to show
\begin{equ}
\nabla_\theta C_0(\theta) = 0
\iff
\theta = \theta_0^*
\end{equ}
\begin{itemize}
\item[$(\Leftarrow)$] This direction is straightforward. At $\theta=\theta_\gamma^*$, we see that $u = 0$ for any $x$, thus, $\nabla_C = 0$. \\

\item[$(\Rightarrow)$] Recall that for $f \geq 0$, $\int f \d \mu = 0 \iff f = 0 \text{ $\pq{\mu}$-a.e.}$ and if $f$ is continuous, then $f \equiv 0$. This tells us
\begin{equ}
  -4 \dtp{\pi_\theta(x)\transpose R
    \nabla_\theta \pi_\theta(x), \pi_\theta(x)\transpose R \nabla_\theta \pi_\theta(x)
  }
=-\|2\pi_\theta(x)\transpose R
  \nabla_\theta \pi_\theta(x)\|^2=
0
\quad
\forall x
\end{equ}
{This condition is equivalent to }
\begin{equ}
\pc{\sum_{k=0}^d \theta_k f_k(x) R f_i(x)}^2=
0 \quad \forall x\in \mathbb R^n, i \in \{1,\dots, d\}\end{equ}
and \begin{equ}{\sum_{k=0}^d \theta_k f_k(x) R f_i(x)}=
0\qquad \forall x \quad \forall x\in \mathbb R^n, i \in \{1,\dots, d\}
\end{equ}

Since we assumed that $\set{f_k}$ is linearly independent (Assumption \ref{asm:linear indep}), {by positive definiteness of $R$} we get that $\theta = \theta_\gamma^*$.
\end{itemize}
\end{proof}

For the next theorem, we recall the definition of the positive definite  Hessian $H_\gamma$ appearing in the proof of Lemma~\ref{lem:secondorderapprox}:
\begin{equ}
\dtheta\transpose H_\gamma \dtheta\transpose = \dtheta\transpose
\pc{
  \begin{bmatrix}
    f_1(x_0) \\
    \vdots \\
    f_d(x_0)
  \end{bmatrix}\transpose
  B\transpose P B
  \begin{bmatrix}
    f_1(x_0) \\
    \vdots \\
    f_d(x_0)
  \end{bmatrix}
  +
  \sum_{t=0}^{\infty} \gamma^t
  \begin{bmatrix}
    f_1(x_t) \\
    \vdots \\
    f_d(x_t)
  \end{bmatrix}\transpose
  R
  \begin{bmatrix}
    f_1(x_t) \\
    \vdots \\
    f_d(x_t)
  \end{bmatrix}
}
\dtheta \end{equ}

\ThmLocalConv*

\begin{proof}[Proof of \tref{thm:local nonlinear convergence}]
Consider the Lyapunov function
\begin{equ}
U(s)
=
\frac{1}{2} \pc{\theta(s) - \theta_{\gamma}^*}\transpose
H_{\gamma} \pc{\theta(s) - \theta_{\gamma}^*}
\end{equ}
where $H_{\gamma}$ is a symmetric positive-definite matrix defined at the end of Lemma \ref{lem:secondorderapprox}.

In this regime we can expand the cost function $C_{\gamma}(\,\cdot\,)$ around its minimum $\theta_{\gamma}^*$. By Lemmas \ref{lem:firstorderapprox} and \ref{lem:secondorderapprox}, the second-order Taylor expansion of the cost is
\begin{equs}
C_\gamma(\theta(s))
&=
	C_\gamma^{(0)}|_{\theta_{\gamma}^*} + C_\gamma^{(1)}|_{\theta_{\gamma}^*} +
	C_\gamma^{(2)}|_{\theta_{\gamma}^*} + o(\norm{\dtheta(s)}^2) \\
&=
	C_\gamma(\theta_{\gamma}^*) +
	\frac{1}{2} \dtheta(s)\transpose H_{\gamma} \dtheta(s) +
	o(\norm{\dtheta(s)}^2)
\\
C_\gamma(\theta(s)) - C_\gamma(\theta_{\gamma}^*)
&=
	U(s) + o(\norm{\dtheta(s)}^2)
\end{equs}
By chain rule,
\begin{equs}
\frac{d}{d s}U(s)
&=
	\dtp{
		\nabla_{\dtheta} U(s),
		\frac{d}{d s}\dtheta(s)
	} \\
&=
	\dtp{
		H_{\gamma}\dtheta(s),
		-\nabla_{\theta}C(\theta(s))
	} \\
&=
	-\dtheta(s)\transpose H_{\gamma}^2 \dtheta(s) +
	o(\norm{\dtheta(s)}^2) \\
&\leq
	-\lambda_{\mathrm{min}}(H_{\gamma}) U(s) +
	o(\norm{\dtheta(s)}^2)
\end{equs}
where $\lambda_{\text{min}}(H_\gamma)$ is the smallest  of $H_\gamma$. By Gr\"onwall's Inequality, this immediately gives, for $\|\dtheta(0)\|< \delta$ for $\delta>0$ sufficiently small, that
\begin{equ}
U(s) \leq e^{-s\lambda_{\mathrm{min}}(H_{\gamma})/2} U(0)\,,
\end{equ}recovering the claim for $\lambda = \lambda_{\mathrm{min}}(H_{\gamma})/2$.
\end{proof}

\end{document}